\newif\ifcomb 
\definecolor{grey}{rgb}{.7,.7,.7}
\definecolor{blue}{rgb}{0,0,.8}
\definecolor{red}{rgb}{.8,0,0}
\definecolor{orange}{rgb}{1,.3,0}
\definecolor{green}{rgb}{0,.4,0}
\definecolor{gold}{rgb}{0.8,0.6,0.1}
\definecolor{brown}{rgb}{0.8,0.4,0.1}
\def\marrow{{\marginpar[\hfill$\Rrightarrow$]{$\Lleftarrow$}}}
\def\vk#1{\color{brown} {\textsc VK: }{\marrow\textsf{#1}} \normalcolor}
\def\martin#1{\color{blue} {\textsc MARTIN: }{\marrow\textsf{#1}} \normalcolor}
\def\todo#1{\color{red} {\textsc TO~DO: }{\marrow\textsf{#1}} \normalcolor}
\def\red#1{\color{red} #1 \normalcolor}
\def\green#1{\color{green} #1 \normalcolor}
\definecolor{blue3}{rgb}{.1,.0,.4}
\def\@makechapterhead#1{
  {\parindent \z@ \raggedright \normalfont
   \Huge\bfseries \thechapter. #1
   \par\nobreak
   \vskip 20\p@
}}
\def\@makeschapterhead#1{
  {\parindent \z@ \raggedright \normalfont
   \Huge\bfseries #1
   \par\nobreak
   \vskip 20\p@
}}
\newcounter{inlineenum}
\renewcommand{\theinlineenum}{\alph{inlineenum}}
\newenvironment{inlineenum}
  {\unskip\ignorespaces\setcounter{inlineenum}{0}%
   \renewcommand{\item}{\refstepcounter{inlineenum}{\textit{\theinlineenum})~}}}
  {\ignorespacesafterend}
\theoremstyle{plain}
\newtheorem{thm}{Theorem}
\newtheorem{lemma}[thm]{Lemma}
\newtheorem{prop}[thm]{Proposition}
\newtheorem*{quest*}{Question}
\newtheorem{cor}[thm]{Corollary}
\newtheorem{obs}[thm]{Observation}
\newcounter{claimcounter}[thm]
\numberwithin{claimcounter}{thm}
\newtheorem*{claim*}{Claim}
\newtheorem{claim}[claimcounter]{Claim}
\theoremstyle{plain}
\newtheorem{defn}[thm]{Definition}
\theoremstyle{remark}
\newtheorem*{cor*}{Corollary}
\newtheorem*{example*}{Example}
\newtheorem{remark}[thm]{Remark}
\newtheorem*{preremark*}{Remark}
  \newenvironment{rem*}%
    {\begin{preremark*}\itshape}{\end{preremark*}}
\newcommand{\R}{\mathbb{R}}
\newcommand{\N}{\mathbb{N}}
\newcommand{\Z}{\mathbb{Z}}
\DeclareMathOperator{\supp}{supp}
\DeclareMathOperator{\corank}{corank}
\DeclareMathOperator{\sign}{sign}
\DeclareMathOperator{\st}{st}
\newcommand{\mb}[1]{\bm{#1}}
\newcommand{\mc}[1]{\mathcal{#1}}
\newcommand{\mt}[1]{\mathtt{#1}}
\newcommand{\abs}[1]{\left|#1\right|}
\newcommand{\lnorm}[2]{\left\|#2\right\|_#1}
\newcommand{\set}[1]{\left\{#1\right\}}
\newcommand{\cl}[1]{\overline{#1}}
\newcommand{\rest}[2]{#1|_{#2}}
\newcommand{\indg}[2]{#1[#2]}
\newcommand{\E}{\mc{E}}
\DeclarePairedDelimiter{\floor}{\lfloor}{\rfloor}
\DeclarePairedDelimiter{\br}{(}{)}
\def\XXint#1#2#3{{\setbox0=\hbox{$#1{#2#3}{\int}$ }
		\vcenter{\hbox{$#2#3$ }}\kern-.6\wd0}}
\newcommand{\mylabel}[2]{{\normalfont #2}\def\@currentlabel{#2}\label{#1}}
\renewcommand{\C}{\mathcal{C}}
\newcommand{\D}{\mathcal{D}}
\newcommand{\K}{\mathcal{K}}
\renewcommand{\L}{\mathcal{L}}
\newcommand{\Nc}{\mathcal{N}}
\renewcommand{\P}{\mathcal{P}}
\newcommand{\F}{\mathcal{F}}
\newcommand{\CC}{\mathbf{C}}
\newcommand{\DD}{\mathbf{D}}
\newcommand{\EE}{\mathbf{E}}
\newcommand{\FF}{\mathbf{F}}
\newcommand{\HH}{\mathbf{H}}
\newcommand{\PP}{\mathbf{P}}
\newcommand{\QQ}{\mathbf{Q}}
\renewcommand{\aa}{\mathbf{a}}
\newcommand{\bb}{\mathbf{b}}
\newcommand{\ee}{\mathbf{e}}
\newcommand{\uu}{\mathbf{u}}
\newcommand{\ww}{\mathbf{w}}
\newcommand{\Gt}{\mathsf{G}}
\newcommand{\gmu}[1]{\Gt_{\mu \leq #1}}
\newcommand{\gsigma}[1]{\Gt_{\sigma \leq #1}}
\renewcommand{\mb}[1]{\mathbf{#1}}
\DeclareMathOperator{\sd}{sd}
\author{Vojtěch Kaluža\thanks{V.K. gratefully acknowledges the support of Austrian Science Fund (FWF): P 30902-N35.
During the early stage of this research, V.K. was partially supported by Charles University project GAUK 926416.}
\\ \small{Institut für Mathematik, Universität Innsbruck, Austria}
\\ \small{\href{mailto:vojtech.kaluza@uibk.ac.at}{vojtech.kaluza@uibk.ac.at}}
\and
Martin Tancer\thanks{M.T. is supported by the grant
no.~19-04113Y of the Czech Science Foundation (GA\v{C}R) and partially
supported by Charles University project UNCE/SCI/004.}
\\ \small{Department of Applied Mathematics, Charles University, Prague, Czech
Republic}
\\ \small{\href{mailto:tancer@kam.mff.cuni.cz}{tancer@kam.mff.cuni.cz}}
}
\date{}
\title{Even maps, the Colin de~Verdi\`{e}re number and representations of graphs}
\begin{document}

\maketitle

\begin{abstract}
  Van der Holst and Pendavingh introduced a graph parameter $\sigma$,
  which coincides with the more famous Colin de Verdi\`{e}re graph parameter
  $\mu$ for small values. However, the definition of $\sigma$ is much more
  geometric/topological directly
  reflecting
  embeddability properties of the graph. They proved
  $\mu(G) \leq \sigma(G) + 2$ and conjectured $\mu(G) \leq \sigma(G)$ for any
  graph $G$. We confirm this conjecture.
  As far as we know,
  this is the first topological upper bound on $\mu(G)$ which is, in general,
  tight.

 Equality between $\mu$ and $\sigma$ does not hold in general as van der Holst and Pendavingh showed that there is a graph $G$ with
  $\mu(G) \leq 18$ and $\sigma(G)\geq 20$. We show that the gap appears on much
  smaller values, namely, we exhibit a graph $H$ for which $\mu(H)\leq 7$ and
  $\sigma(H)\geq 8$. We also prove 
  that, in general, the gap can be large: The
  incidence graphs $H_q$ of finite projective planes of order $q$ satisfy
  $\mu(H_q) \in O(q^{3/2})$ and $\sigma(H_q) \geq q^2$.
\end{abstract}




\ifcomb
\bigskip
An earlier version of this work appeared in the proceedings of the 2020
ACM-SIAM Symposium on Discrete Algorithms (SODA20)~\citep{kaluza_tancer_soda}.
The earlier version contains full proofs of the results mentioned in the abstract.
The present version contains additionally an Appendix~\ref{a:mu} giving an
explicit PSPACE algorithm for computing $\mu$. Moreover, we alter the core definition of our work (a semivalid representation;
see Definition~\ref{def:ex_represent}). In our opinion, the new definition is
conceptually simpler and fits better into the existing theory.  We use it to
introduce a new graph parameter, which we call $\eta$ and which did not appear
in the previous version. It allows us to establish an extended version of
the main result showing that $\mu(G)\leq\eta(G)\leq\sigma(G)$ holds for every graph $G$.

\linenumbers
\else
\fi

\section{Introduction}

In 1990 \citet{CdV_orig} (English translation \citep{CdV_orig_en}) introduced a
graph parameter $\mu(G)$. 
It arises from the study of the multiplicity of the second smallest
eigenvalue
of certain matrices associated to a graph $G$ (discrete Schr\"{o}dinger operators);
however, it turns out that this parameter is 
closely related to geometric and topological properties of $G$. 
In particular, this parameter is minor monotone, and moreover, it satisfies:
\begin{enumerate}[(i)]
\item $\mu(G) = 0$ if and only if $G$ embeds in $\R^0$;
\item $\mu(G) \leq 1$ if and only if $G$ embeds in $\R^1$;
\item $\mu(G) \leq 2$ if and only if $G$ is outer planar;
\item $\mu(G) \leq 3$ if and only if $G$ is planar; and
\item $\mu(G) \leq 4$ if and only if $G$ admits a linkless embedding into
  $\R^3$.
\end{enumerate}

The characterization up to the value 3 as well as the minor monotonicity of $\mu$ was shown by \citet{CdV_orig, CdV_orig_en}. The characterization of graphs with $\mu(G)\leq 4$ was established by \citet{CdV_linkless}.
Beyond this, any description is known only for the classes of graphs
with $\mu(G)\geq \abs{V(G)}-k$ for $k=1,2,3$ and partial results are known also for $k=4,5$; see \citep{CdV_sphere_rep}.
It used to be an open problem whether the graphs with
$\mu(G) \leq 5$ coincide with knotless embeddable
graphs \citep[Sec.~14.5]{knotless_and_CdV},~\citep[Sec.~7]{Thomas_knotless_and_CdV}.
However, a graph $H$ constructed
by~\citet{foisy03} satisfies $\mu(H) \leq 5$ whereas it is not knotless
embeddable.\footnote{The inequality $\mu(H) \leq 5$ follows from the fact that
there is a vertex $v$ of $H$ such that $H - v$ is a linkless embeddable graph,
that is, $\mu(H - v) \leq 4$.} We are very thankful to Rose McCarty for sharing this example
with us~\citep{mccarty19pc}.

Due to the aforementioned properties, the study of $\mu$ gained a lot of
popularity (e.g., \citep{Bacher_CdV, vdHolst_planar, CdV_clique_sums, CdV_sphere_rep, CdV_linkless, 
CdV_main, CdV_nullspace,
CdV_Steinitz, 
Izmestiev, Goldberg, flat_has_Arnold, McCarty, Tait}).
A precise definition of the parameter $\mu$ is given at the end
of Subsection~\ref{s:CdV}.
Later,
in 2009, \citet{sigma} introduced another minor monotone
parameter $\sigma(G)$, whose definition is much closer to the topological
properties of $G$. Roughly speaking, $\sigma(G)$ is defined as a minimal
integer $k$ such that every CW-complex $\C$ whose $1$-skeleton is $G$ admits
a so-called even mapping into $\R^k$. This is a mapping $f$ such that 
whenever $\vartheta$ and $\tau$ are disjoint cells of $\C$, then $f(\vartheta)
\cap f(\tau) = \emptyset$ if $\dim \vartheta + \dim \tau < k$, and $f(\vartheta)$ and
$f(\tau)$ cross in an even number of points if $\dim \vartheta + \dim \tau = k$.
For a precise definition, we refer to~\citep{sigma}.


It turns out that $\sigma(G) \leq k$ if and only if $\mu(G) \leq k$ for $k \in
\{0, 1, 2, 3, 4\}$.
In addition, \citet[Conj.~43]{sigma} conjectured that this is true
also for $k=5$. However, in general, $\sigma$ and $\mu$ differ.
They provide an example of a graph with $\mu(G) \leq 18$, but
$\sigma(G) \geq 20$ based on a previous work
of~\citet{Pendavingh_separation}.
On the other hand, 
\citet[Cor.~41]{sigma} 
proved that $\mu(G) \leq \sigma(G) + 2$, while they conjectured that $\mu(G)
\leq \sigma(G)$. We confirm this conjecture.

\begin{thm}
\label{t:main}
For any graph $G$, $\mu(G) \leq \sigma(G)$.
\end{thm}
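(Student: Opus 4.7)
The plan is to introduce an intermediate graph parameter $\eta(G)$ and establish the chain $\mu(G)\le\eta(G)\le\sigma(G)$, as announced in the extended abstract. The parameter $\eta$ is defined via the notion of semivalid representations of $G$ (Definition~\ref{def:ex_represent}). Informally, a semivalid representation of $G$ in dimension $d$ should consist of vectors $\phi(v)\in\R^d$ for $v\in V(G)$, together with a choice of signs on edges, satisfying a transversality condition that mirrors the Strong Arnold Hypothesis from the matrix-side definition of $\mu$. Set $\eta(G)$ to be the largest $d$ for which such a representation of $G$ exists.

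For the inequality $\mu(G)\le\eta(G)$, start from a matrix $M$ witnessing $\mu(G)=d$: a discrete Schr\"odinger operator of corank $d$ satisfying the Strong Arnold Hypothesis. Picking a basis of $\ker M$ produces a vertex map $\phi\colon V(G)\to\R^d$, the standard nullspace representation of $G$. The sign pattern of $M$ on edges provides the required sign data, and the Strong Arnold Hypothesis translates directly into the transversality condition, yielding $\eta(G)\ge d=\mu(G)$.

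For the inequality $\eta(G)\le\sigma(G)$, I argue contrapositively: given a semivalid representation $\phi$ of $G$ in dimension $d$, construct a specific CW-complex $\C$ with $1$-skeleton $G$ by attaching higher-dimensional cells along selected cycles of $G$ using gluing data dictated by $\phi$ and the sign information. If $\C$ admitted an even map $f$ into the Euclidean space of the dimension threatened by $\sigma(G)<d$, then pairing $f$ with a geometric realization of $\phi$ via a $\Z/2$ intersection-counting argument (in the spirit of, but sharper than, the argument of \citet{sigma} for $\mu(G)\le\sigma(G)+2$) would force two counts of transverse intersections of complementary-dimensional cells to have opposite parities, a contradiction. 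Consequently $\sigma(G)\ge d$, so $\eta(G)\le\sigma(G)$, and combining the two inequalities gives the theorem.

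The main obstacle is exactly this last step. The earlier inequality $\mu(G)\le\sigma(G)+2$ already implements a similar pairing but loses two dimensions in the passage from the nullspace side to the even-map side, and the improvement requires a definition of semivalid representation tight enough not to introduce any dimensional slack when paired against an even map. Getting this definition right---so that the Strong Arnold Hypothesis passes through cleanly on the $\mu$ side while the sign data and transversality still support a dimension-matched $\Z/2$ intersection argument on the $\sigma$ side---is the technical heart of the proof, and it is precisely what motivates the reformulation of semivalid representations mentioned in the appended note to the extended abstract.
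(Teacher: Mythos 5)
The high-level architecture is right --- the paper does exactly introduce $\eta(G)$ via semivalid representations and proves $\mu(G)\le\eta(G)\le\sigma(G)$ --- but two things go wrong. First, your description of a semivalid representation misses its actual content. It is not a vertex map $\phi\colon V\to\R^d$ with sign data and a ``transversality condition that mirrors the Strong Arnold Hypothesis''; it is a linear subspace $L\subseteq\R^V$ whose nonzero vectors $x$ satisfy purely \emph{combinatorial} conditions on the connectivity of $\indg{G}{\supp_+(x)}$ and $\indg{G}{\supp_-(x)}$ (Definition~\ref{def:ex_represent}). The whole point of the definition is to \emph{eliminate} SAH, replacing it by these support conditions; this is what later permits lower bounds on $\sigma$ from matrices violating SAH (Theorems~\ref{t:gap} and~\ref{t:asymptotic}). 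The inequality $\mu(G)\le\eta(G)$ is then not a ``direct translation'' of SAH: it needs Lemma~\ref{lemma:basic} together with Pendavingh's Lemma~\ref{lemma:Pendavingh} (SAH bounds the number of components of $\indg{G}{\supp(x)}$ by three) to verify that $\ker(M)$ satisfies all four clauses of the definition.

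Second, and more seriously, the step $\eta(G)\le\sigma(G)$ --- which you correctly identify as the heart of the matter --- is not actually argued. The paper does not attach cells to $G$ guided by the representation and run an intersection count against a hypothetical even map. Instead it decomposes $L$ into the fan $\P(L)$ of sign-pattern cones, builds a centrally symmetric simplicial polytope $\PP$ subdividing this fan with several delicate extra properties (Definition~\ref{def:pol_represent}, Proposition~\ref{p:polytopal_exists}), and constructs a cellular map $f\colon\PP^{(1)}\to G$ such that antipodal faces land on node-disjoint subgraphs (Proposition~\ref{p:lemma37}); the conclusion then follows from Lemma~37 of van der Holst--Pendavingh. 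The entire technical difficulty lives in the \emph{broken} cones --- the $2$-dimensional cones whose vectors have disconnected positive support --- where edges of $\PP$ must be routed through a chosen separator node $v(\beta)$ while all antipodal edges must avoid that node. Your sketch contains no mechanism for this, and asserting that a ``tight enough'' definition will make a dimension-matched $\Z/2$ pairing work is naming the problem rather than solving it. As written, the proposal establishes the theorem only up to the exact point where the paper's real work begins.
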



Our tools that we use for the proof of Theorem~\ref{t:main} also allow us to
show that the gap between $\mu$ and $\sigma$ appears at much smaller values.

\begin{thm}
\label{t:gap}
There is a graph $G$ such that $\mu(G)\leq 7$ and $\sigma(G)\geq 8$.
\end{thm}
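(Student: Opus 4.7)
The plan is to exhibit an explicit graph $H$ witnessing the gap. A natural approach is to imitate, at a much smaller scale, the original construction of van der Holst and Pendavingh that gave $\mu(G)\leq 18$ and $\sigma(G)\geq 20$, namely a graph obtained from a Pendavingh-style separation/join construction, but fed with a more efficient ``seed'' than in the original. Candidate seeds I would try are the incidence graph $H_2$ of the Fano plane (Heawood graph), $K_{3,3,3}$, or a carefully chosen minor of $H_q$ for small $q$. The reason to expect this to work is that we may now invoke Theorem~\ref{t:main}, i.e., $\mu\leq\sigma$, so any inequality $\mu < \sigma$ known for some graph automatically transfers through $\sigma$-monotone constructions, and we only need to nudge the numerical gap down to the pair $(7,8)$.

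To prove $\sigma(H)\geq 8$, I would use the same topological/parity obstruction that is the engine behind the Pendavingh separation estimate. Concretely, one fixes a CW-complex $\C$ with $1$-skeleton $H$ (adding the right $2$- and $3$-cells to encode disjointness witnesses for the intended pairs of cells) and assumes, toward a contradiction, the existence of an even map $f\colon\C\to\R^{7}$. The seed graph is chosen so that it carries a $\mathbb{Z}_2$-obstruction (a Van~Kampen--Flores / deleted product argument) preventing such an even map; the separation/join inflates this obstruction by a controlled amount so that the obstruction persists and forces $\sigma\geq 8$. The bookkeeping here is essentially modulo $2$ counting of intersections of complementary-dimensional disjoint cells, combined with the Borsuk--Ulam type invariance of such counts.

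For the upper bound $\mu(H)\leq 7$, I would aim for a direct construction of a discrete Schr\"odinger operator $M$ on $V(H)$ of corank $7$, with exactly one negative eigenvalue, correct off-diagonal sign pattern, and the Strong Arnold Property. Because the seed graphs above are very symmetric, one can often find such $M$ by starting from the adjacency matrix and perturbing along the symmetry group, reducing the SAP to a low-dimensional transversality check. Alternatively, one may bound $\mu$ from above indirectly through minor-monotonicity and known values of $\mu$ on well-understood classes (e.g.\ graphs of bounded genus or bounded tree-width), but I expect the explicit matrix route to be needed here, since the gap is required to be tight at $7$ rather than merely ``below $8$''.

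The main obstacle is the simultaneous calibration of the two bounds: the Pendavingh-type separation construction tends to enlarge $\mu$ in parallel with $\sigma$, so the challenge is choosing a seed and a separation amount that bump $\sigma$ up to $8$ while keeping $\mu$ at $7$. I expect most of the technical effort to lie in (i) verifying the Strong Arnold Property of the explicit $M$, which for a constructed graph is a delicate transversality statement, and (ii) carrying through the $\mathbb{Z}_2$-obstruction at exactly the dimension where it switches from being consistent with an even map into $\R^{7}$ to not being consistent; this is precisely the place where the numerical threshold $8$ enters and must be checked by a direct parity computation on the constructed CW-complex.
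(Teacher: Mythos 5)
Your proposal has two genuine gaps. First, the method you propose for the upper bound is backwards: exhibiting a discrete Schr\"odinger operator $M$ of corank $7$ with exactly one negative eigenvalue and the Strong Arnold property proves $\mu(H)\geq 7$, not $\mu(H)\leq 7$. An upper bound on $\mu$ is a universally quantified statement (no corank-$8$ witness exists), and the paper obtains it combinatorially from Pendavingh's edge-count bound $\abs{E}\geq\binom{\mu(G)+1}{2}$ (Theorem~\ref{thm:mu_edge_bound}): the graph is trimmed so that, after suppressing its degree-$2$ vertices (which preserves $\mu$ when $\mu\geq 3$), it has exactly $35<\binom{9}{2}$ edges, forcing $\mu\leq 7$. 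This also dissolves your worry about ``calibration'': under vertex and edge deletions $\mu$ drops quickly because of the edge count, while $\sigma$ drops by at most $1$ per deleted vertex, and this asymmetry is precisely the decoupling mechanism.

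Second, the lower-bound strategy is not workable as described. You propose a direct $\Z_2$ deleted-product/van Kampen obstruction computation on a CW-complex built over the seed graph, but you give no mechanism for why such an obstruction should be nonzero in exactly the dimension corresponding to $\sigma\geq 8$, and for a graph of the relevant size this computation is not feasible. The paper never computes a topological obstruction for the lower bound. Instead it uses the parameter $\eta$ (maximal dimension of a semivalid representation) together with $\eta\leq\sigma$ (Proposition~\ref{p:sigma_ex_rep_new}, i.e., the machinery behind Theorem~\ref{t:main}): the matrix $M_3=\sqrt{3}I-A_3$ for the incidence graph $H_3$ of the projective plane of order $3$ has corank $12$ by the known spectrum of $A_3$; since $H_3$ is $4$-regular, deleting one edge $e$ and passing to the codimension-$1$ subspace $\set{y\in\ker(M_3)\colon y_u+y_v=0}$ yields a semivalid representation of dimension $11$ (Lemmas~\ref{lemma:intersect_broken_sigma} and~\ref{lemma:d_reg_broken}), so $\sigma(H_3-e)\geq\eta(H_3-e)\geq 11$; finally, deleting three suitably chosen vertices loses at most $3$ from $\sigma$, giving $\sigma(G)\geq 8$. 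Your instinct to start from a small $H_q$ is in the right ballpark, but without the spectral-multiplicity-plus-semivalid-representation mechanism neither bound is established.
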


We remark here that adding a new vertex to a graph $G$ and connecting it to all
vertices of $G$ increases both $\mu(G)$ and $\sigma(G)$ by exactly one (unless
$G$ is the complement of $K_2$); see \citep[Thm.~2.7]{CdV_main} and
\citep[Thm.~28]{sigma}. Consequently, Theorem~\ref{t:gap} immediately implies
that for every $k\in\N, k\geq 7$ there is a graph $G_k$ with $\mu(G_k)\leq k$
and $\sigma(G_k)\geq k+1$.

The key step in the proof of Theorem~\ref{t:gap} is to provide a lower bound on $\sigma$; otherwise we
follow~\citep{Pendavingh_separation}. We remark that the example of $G$ with
$\mu(G) \leq 18$ but $\sigma(G) \geq 20$ coming from \cite{sigma,
Pendavingh_separation} is highly regular Tutte's 12-cage. The important
property is that the second largest eigenvalue of the adjacency matrix of Tutte's 12-cage has very high
multiplicity. We use instead the incidence graphs of finite projective planes, which
enjoy the same property. Namely, if $H_q$ is the incidence graph of a finite
projective plane of order $q$, we will show that $\mu(H_3) \leq 9$, whereas
$\sigma(H_3) \geq 11$; see Proposition~\ref{p:9_11}. Then, by further modification of this graph, we obtain the
graph from Theorem~\ref{t:gap}.

As a complementary result, based on properties of finite projective planes, we
also show that the gap between $\mu$ and $\sigma$ is asymptotically large.

\begin{thm}
\label{t:asymptotic}
Let $q\in\N$ be such that a finite projective plane of order $q$
exists\footnote{This includes all prime powers $q$ (see, e.g., \citep[Sec.~2.3]{Stinson_designs}).}. Then $\mu(H_q)\in
  O\br*{q^{3/2}}$, while $\sigma(H_q)\geq\lambda(H_q)\geq q^2$, where
  $\lambda$ is the graph parameter of~\citet{lambda}, which we overview in
  Section~\ref{s:gaps}.
\end{thm}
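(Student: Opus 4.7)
I would establish the two bounds separately, drawing on the rich spectral and combinatorial structure of $H_q$.

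For the lower bound $\sigma(H_q)\geq\lambda(H_q)\geq q^2$, the first inequality is a general comparison between graph parameters (discussed in Section~\ref{s:gaps}); the core is the spectral lower bound $\lambda(H_q)\geq q^2$. Writing the adjacency matrix of $H_q$ in the block form $A = \begin{pmatrix}0 & N\\ N^T & 0\end{pmatrix}$, with $N$ the $(q^2+q+1)\times(q^2+q+1)$ point--line incidence matrix, the identity $NN^T = qI + J$ shows that the spectrum of $A$ consists of $\pm(q+1)$ (each simple) together with $\pm\sqrt{q}$ (each with multiplicity $q^2+q$). I would use this high multiplicity to exhibit a witness $M$ for $\lambda(H_q)$: the natural candidate $M = \sqrt{q}\,I - A$ has off-diagonal zeros exactly on the non-edges of $H_q$ and \corank\ equal to $q^2+q \geq q^2$. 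The remaining conditions in the definition of $\lambda$ -- the sign/semidefiniteness requirement and the Strong Arnold Property -- would then be verified by exploiting the vertex-transitive collineation action on $H_q$, which acts irreducibly on the $\sqrt{q}$-eigenspace of $A$.

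For the upper bound $\mu(H_q)\in O(q^{3/2})$, I would exploit sparsity: $H_q$ has $n = 2(q^2+q+1)$ vertices, $m = (q+1)(q^2+q+1) = \Theta(q^3)$ edges, and girth $6$. Euler's formula in the girth-$6$ regime forces the Euler genus of $H_q$ to be $\Theta(q^3)$. I would then appeal to a bound on $\mu$ in terms of Euler genus of the asymptotic form $\mu(G) \leq c\sqrt{g(G)}+O(1)$ -- an asymptotic consistent with the tight examples $\mu(K_n)=n-1$ and $g(K_n)=\Theta(n^2)$ -- which would yield $\mu(H_q) = O(q^{3/2})$. If no off-the-shelf bound of the right strength is available, the alternative is to analyze Colin de Verdi\`ere matrices $M$ for $H_q$ directly, using the Strong Arnold Property to beat down the naive \corank\ of $\Omega(q^2)$ suggested by the spectrum of $A$.

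The principal obstacle is the upper bound on $\mu$: the raw spectral multiplicity of $H_q$ is $q^2+q$, massively exceeding $O(q^{3/2})$, so any proof must make essential use of the Strong Arnold Property (or an equivalent structural constraint) in order to discard the naive matrix candidates. By contrast, the lower bound uses the same spectral data in the easy direction: one only has to exhibit a single good matrix for $\lambda$, not control all of them.
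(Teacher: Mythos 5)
There are genuine gaps in both halves of your argument.

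\textbf{Lower bound.} You have misread the definition of $\lambda$: it is not a matrix parameter, so there is no ``sign/semidefiniteness requirement'' and no Strong Arnold Property to verify. By definition, $\lambda(H_q)\geq q^2$ requires exhibiting a linear subspace $X\subseteq\R^{V}$ of dimension $q^2$ such that \emph{every} nonzero $x\in X$ has $H_q[\supp_+(x)]$ nonempty and connected. Taking $X=\ker(M_q)$ with $M_q=\sqrt{q}\,I-A_q$ is the right starting point, but the full kernel need not be a valid representation: it can contain broken vectors, and irreducibility of the collineation action says nothing about connectivity of positive supports. This is exactly the issue the paper's Proposition~\ref{prop:bound_on_lambda} addresses: since $H_q$ has maximum degree $d=q+1$, Lemmas~\ref{lemma:Pendavingh} and~\ref{lemma:d_reg_broken} show every broken vector of $\ker(M_q)$ lies in a subspace of dimension at most $d-1=q$, and a general position argument then produces a subspace of $\ker(M_q)$ of dimension $\corank(M_q)-d+1=q^2+q-q=q^2$ avoiding all of them. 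Note that the answer is $q^2$, not the full $q^2+q$ you implicitly claim; the loss of $q$ dimensions is the price of killing the broken vectors, and this step is the entire content of the lower bound.

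\textbf{Upper bound.} Your argument hinges on a bound of the form $\mu(G)\leq c\sqrt{g(G)}+O(1)$, which to our knowledge is not a theorem: the known genus bounds on $\mu$ are \emph{linear} in the genus, which here would only give $\mu(H_q)=O(q^3)$. Being ``consistent with $K_n$'' is not a proof, and your fallback (``analyze Colin de Verdi\`ere matrices directly using SAH'') is not an argument. The square-root saving comes from a different, purely edge-counting result of Pendavingh (Theorem~\ref{thm:mu_edge_bound}): every connected graph other than $K_{3,3}$ satisfies $\abs{E}\geq\binom{\mu(G)+1}{2}$, so $\mu(H_q)=O\br*{\sqrt{\abs{E(H_q)}}}=O\br*{\sqrt{(q+1)(q^2+q+1)}}=O\br*{q^{3/2}}$. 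You correctly computed $\abs{E(H_q)}=\Theta(q^3)$ but then routed the estimate through the genus, where the needed inequality is unavailable.
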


\paragraph{Further motivation and computational aspects.} If we are interested
only in the properties of the Colin de Verid\`{e}re parameter $\mu$,
Theorem~\ref{t:main} can be reformulated as: If $\sigma(G) \leq k$, then
$\mu(G) \leq k$. In other words, if $G$ has a nice geometric
description\footnote{In fact, Theorem~30 of~\citep{sigma} reveals that an even
mapping of a CW-complex $\C$ (in the definition of $\sigma$) can be exchanged
with an even mapping of the $\lfloor k /2 \rfloor$-skeleton of $\C$ into $\R^{k-1}$,
provided that in addition $\C$ is a so-called closure (which can be assumed in
the definition of $\sigma$). This explains the shift of the dimension in
the geometric description of the classes with $\mu(G) \leq 3$ or $\mu(G) \leq 4$,
equivalently, the classes with $\sigma(G) \leq 3$ or $\sigma(G) \leq 4$.} in
$\R^k$, then $\mu(G) \leq k$. This is tight in general
because $\mu(K_n) =
\sigma(K_n) = n-1$, where $K_n$ is the complete graph on $n$
vertices~\cite{CdV_main, sigma}. As far as we can say this is the first tight
upper bound on $\mu(G)$ taking into account embeddability properties of $G$ for general value
of the parameter.\footnote{For comparison, there is a result of
\citet{Izmestiev} providing a quite different lower bound on $\mu$: If $G$ is a $1$-skeleton of
convex $d$-polytope, then $\mu(G) \geq d$. However, as Izmestiev points out,
this result already follows from the minor monotonicity of $\mu$ and the fact
that the $1$-skeleton of a $d$-polytope contains $K_{d+1}$ as a minor.}

On the other hand, we would also like to argue that the parameter $\sigma$ deserves
comparable attention as $\mu$. 

First of all, it provides a much more direct geometric generalization of graph
planarity than the parameter $\mu$; more in a spirit of the Hanani--Tutte type
characterization of graph planarity (see, e.g., ~\citep{schaefer13}). 


Next, it seems that it might be computationally much more tractable to
determine the graphs with $\sigma \leq k$ when compared to graphs with $\mu
\leq k$. From now on, let $\gmu k$ and $\gsigma k$ denote the class of graphs
with $\mu \leq k$ and $\sigma \leq k$ respectively.
Of course, once we fix an integer $k$, there is a polynomial time
algorithm for recognition of graphs in $\gmu k$ and $\gsigma k$ via
the Robertson--Seymour theory \citep{robertson_seymour95, robertson_seymour04} as there is a finite list of
forbidden minors for these classes. 
The minors are well known if
$k \leq 4$; however the catch of this approach is
that it seems to be out of reach to find the minors as soon as $k \geq 5$.

Let us focus on the interesting case $k=5$.
We are not aware of any explicit algorithm for determining the graphs in $\gmu 5$ in the literature. The best algorithm we could come up with is a PSPACE algorithm based on the existential theory of the reals. (This algorithm recognizes the graphs in $\gmu k$ for general $k\in\N$.)
For completeness we describe it in Appendix~\ref{a:mu}.

On the other hand, there is a completely
different set of tools for recognition of
graphs $G$ from $\gsigma 5$.
According to~\cite[Thm.~30]{sigma}
it is sufficient to verify whether the $2$-skeleton of a so-called
closure of $G$ admits an even mapping into $\R^4$. We do not describe
here a closure of $G$ in general; however, according to the definition
in~\cite{sigma}, it can be chosen in such a way that its $2$-skeleton coincides
with the complex obtained by gluing a disk to each cycle of $G$; let us
denote this complex by $\C^2(G)$. It is in general well known that it can be
determined whether a $2$-complex admits an even mapping to $\R^4$ (even in
polynomial time in the size of the complex). From the point of view of algebraic
topology, this is equivalent to vanishing of the $\Z_2$-reduction of the 
so-called van
Kampen obstruction. An explicit algorithm can be found
in~\citep{matousek_tancer_wagner11} 
modulo
small modifications caused by the facts that $\C^2(G)$ is not a simplicial
complex and that we work with the $\Z_2$-reduction.
The algorithm runs 
in time polynomial in size of $\C^2(G)$, which might be exponential in size of
$G$. However, the naive implementation of the algorithm seems to perform many
redundant checks.
By removing some of these redundancies, we can get
an explicit polynomial time certificate for $\sigma(G) > 5$, that is, a
certificate for co\nobreakdash-NP membership.
\ifcomb
A proof of this is provided only in the full version of the present paper~\citep[App.~B]{kaluza_tancer19arxiv} on arXiv. 
\else
A proof of this is given in Appendix~\ref{a:sigma}.
\fi
Optimistically, we may hope that this algorithm could be
adapted to an explicit polynomial time algorithm.

Now, if the conjecture $\gmu 5 = \gsigma 5$ of van der Holst and Pendavingh is
true, then the algorithm above also determines graphs with $\mu \leq 5$.
Theorem~\ref{t:main} gives one implication.


Similar ideas can perhaps be used for the recognition of graphs from $\gsigma k$
with general $k$, though this requires working with the $\lfloor (k
-1)/2\rfloor$-skeleton of the closure, which is more complicated. (Of course,
the impact on $\mu$ is then more limited due to Theorems~\ref{t:gap} and \ref{t:asymptotic}.)

\paragraph{Overview of our proofs.} Here we briefly overview the key steps in our main proofs. We start with Theorem~\ref{t:main}.
On high level, we follow \citet{CdV_linkless}, who showed that if $G$ is a
linklessly embeddable graph, then $\mu(G) \leq 4$. First we sketch (in our
words) their strategy and then we point out the important differences.

For contradiction, Lov\'{a}sz and Schrijver assume that there is linklessly
embeddable $G$ with $\mu(G) \geq 5$. According to the definition of $\mu$
(given in the next section), there is a certain matrix $M \in \R^{V \times
V}$ of corank $5$
associated to $G = (V,E)$ which witnesses $\mu(G) \geq 5$. 
Given a vector $x\in\R^V$, we denote by $\supp(x)$ the set $\set{v\in V \colon
x_v\neq 0}$. Correspondingly, we define $\supp_+(x):=\set{v\in V\colon x_v> 0}$
and $\supp_-(x):=\set{v\in V\colon x_v<0}$. Then $\ker(M)$, the kernel of $M$, 
can be decomposed
into equivalence classes of vectors for which $\supp_+$ and $\supp_-$ coincide.
Each equivalence class is a (relatively open) cone (see Definition~\ref{d:PL}).
Then, by choosing a suitably dense set of unit vectors in each of the cones
and 
taking the convex hull, Lov\'{a}sz and Schrijver obtain a
$5$-dimensional polytope $\PP$ such that every relatively open face of
$\partial \PP$ is in one of the cones. 

Given a linkless embedding of $G$ (more
precisely, a flat embedding), it is possible now to define an embedding $f$ of
the $1$-skeleton $\PP^{(1)}$ into $\R^3$ in such a way that for every vertex
$\uu$ of $\PP$, which is also a vector of $\ker (M)$, $f(\uu)$ is
mapped close to a vertex of $\supp_+(\uu)$ (this vertex is embedded in
$\R^3$ by the given linkless/flat embedding of $G$).

Also, for
every edge $\ee = \uu\ww$ of $\PP$, we have $\supp_+(\ee) \supseteq
\supp_+(\uu), \supp_+(\ww)$. If $G[\supp_+(\ee)]$, the subgraph induced by
$\supp_+(\ee)$, is connected for every such
$\ee$, then Lov\'{a}sz and Schrijver pass $f(\ee)$ close to some path
connecting $f(\uu)$ and $f(\ww)$ in $G[\supp_+(\ee)]$. An existence of such $f$
then reveals that the original embedding of $G$ was not linkless via a
Borsuk--Ulam type theorem by \citet{CdV_linkless}, which is the required
contradiction. 

It, however, still remains to resolve the case when some edges $\ee$ do not
satisfy that $G[\supp_+(\ee)]$ is connected. Such edges are called
\emph{broken} edges and it is the main technical part of the proof to take care
of them. 
Via structural properties of $G$, including the usage of one of the forbidden
minors for linkless embeddability (see \citep{linkless_minors} for the list of minimal such graphs),
Lov\'{a}sz and Schrijver show how to route the broken edges without introducing
new linkings, which again yields the required contradiction.

Our main technical contribution is that we design a strategy how to route
broken edges without any requirements on the structure of $G$. Namely, we show
that if we make several very careful choices in the very beginning when placing
the vertices of $\PP$ as well as if we carefully route the nonbroken edges of
$\PP$, then we are able to make enough space for broken edges as well. The
important property is that when $\FF$ and $\FF'$ are (so-called) antipodal
faces, then the edges of $\FF$ and the edges of $\FF'$ are routed close to
disjoint subgraphs. (The precise statement is given by
Proposition~\ref{p:lemma37}, and we actually map $\PP^{(1)}$ into the graph $G$.)

Now, we could aim to conclude in a similar way as Lov\'{a}sz and Schrijver via
a suitable Borsuk--Ulam type theorem, which would require to extend the map to
higher skeletons and to perturb it a bit. However, we instead use a lemma of
\citet{sigma} tailored to such a setting, which they used in the proof of the
inequality $\mu(G) \leq \sigma(G) + 2$ (see the proof of
Proposition~\ref{p:sigma_ex_rep_new}).

Last but not least, instead of working directly with matrices from the
definition of $\mu$, we abstract their properties required in the proof of
Theorem~\ref{t:main} into a notion of \emph{semivalid representation};
see Definition~\ref{def:ex_represent}. (The main difference is that we replace the so-called Strong Arnold hypothesis by more combinatorial properties.) 
This abstraction turns out to be very useful in the proof of Theorem~\ref{t:gap}
because then it is possible to provide lower bounds on $\sigma$ also with aid
of matrices not satisfying the Strong Arnold hypothesis, which is essential if
we want to separate $\mu$ and $\sigma$. 

Recall that by $H_q$ we denote the incidence graph of a finite projective plane of order $q$.
We add a short description of how our bound on $\sigma(G)$ is used in the
proof of Theorem~\ref{t:gap}; here we only sketch how to show a slightly weaker
result $\sigma(H_3) \geq 11$, discussed below the statement of
Theorem~\ref{t:gap}. We first note that semivalid representations are
defined as certain linear subspaces of $\R^V$ and we will introduce a parameter
$\eta(G)$ which is the maximal dimension of a semivalid representation. We will
also show $\mu(G) \leq \eta(G) \leq \sigma(G)$, where $\mu(G) \leq \eta(G)$
follows easily from the known results on $\mu$ whereas showing the inequality
$\eta(G) \leq \sigma(G)$ is the core of the proof of Theorem~\ref{t:main}.

Now let us consider a matrix $M_3$ which is a suitable shift of the
adjacency matrix of $H_3$. This matrix satisfies $\corank(M_3) = \dim \ker (M_3) =
12$ and $\ker M_3$ is `almost' a semivalid representation of $G$. Namely, by a
trick that we learnt from~\citet{Pendavingh_separation} we can
find a codimension 1 subspace $L$ of $\ker(M)$ which is a semivalid
representation. This shows $\eta(H_3) \geq 11$ and the key inequality
$\sigma(G) \geq \eta(G)$ gives the required bound $\sigma(H_3) \geq 11$.

The proof of Theorem~\ref{t:asymptotic} follows the same high-level strategy as
the proof of Theorem~\ref{t:gap}, except we do not work there with a semivalid
representation, but rather with a so-called \emph{valid representation}, which
is a concept used to define the parameter $\lambda$ (see
Subsection~\ref{ss:semivalid}).
We use a simple general position argument to show that if $G$ has a low maximum
degree, then a large subspace of $\ker(M_q)$ has to be a valid
representation of $G$ where $M_q$ is, in analogy to the previous case, 
a suitable shift of the adjacency matrix of $H_q$.  

\paragraph{Organization.} In Section~\ref{s:representations} we overview (or introduce)
various representations of graphs and establish some of their
properties. Then we prove Theorem~\ref{t:main} in Section~\ref{s:mu_sigma} and
Theorems~\ref{t:gap} and~\ref{t:asymptotic} in Section~\ref{s:gaps}.


\section{Representations of graphs}
\label{s:representations}
\subsection{The Colin de Verdi\`ere graph parameter}
\label{s:CdV}

If not stated otherwise, we work with a graph $G=(V,E)$.
We use the usual graph-theoretic notation $N(v)$ for all vertices adjacent to
$v\in V$ and $N(S)$ for all vertices in $V\setminus S$ adjacent to a vertex in
$S\subseteq V$. Moreover, we use $\indg{G}{S}$ to denote the subgraph of $G$ induced by $S$.
For a set $S\subset V$ we denote by $x_S$ the restriction of the
vector $x$ to the subset $S$, that is, $x_S:=(x_v)_{v\in S}$.

Let $\mc{M}(G)$ be the set of \emph{symmetric} matrices $M$ in $\R^{V\times V}$ satisfying
\begin{enumerate}[(i)]
	\item $M$ has exactly one negative eigenvalue of multiplicity one,
	\item for any $u\neq v\in V$, $uv\in E$ implies $M_{uv}<0$ and $uv\notin E$ implies $M_{u,v}=0$.
\end{enumerate}
The matrices satisfying only the second of the properties above are sometimes called \emph{discrete Schrödinger operators} in the literature.

Note that there is no condition on the diagonal entries of $M$. Despite this, a
part of the Perron--Frobenius theory is still applicable to $M\in\mc{M}(G)$,
assuming that $G$ is connected (if not, the same reasoning can be applied
component-wise). This is because the matrix $-M+cI_V$, where $I_V$ denotes the
identity matrix of size $V\times V$, has nonnegative entries for $c>0$ large
enough. Since this transformation preserves all eigenspaces, the
Perron--Frobenius theorem implies that the smallest eigenvalue of $M$ has
multiplicity one and the corresponding eigenvector is strictly positive (or
strictly negative). For instance, as $M$ has an orthogonal eigenbasis,
this implies that every nonzero vector $x\in\ker(M)$ must have both
$\supp_+(x)$ and $\supp_-(x)$ nonempty.


A matrix $M\in\mc{M}(G)$ satisfies the so-called \emph{Strong Arnold
hypothesis} (SAH),
if
\[
MX=0 \qquad\Longrightarrow\qquad X=0
\]
for every symmetric $X\in\R^{V\times V}$ such that $X_{u,v}=0$ whenever $u=v$
or $uv\in E$.
The \emph{Colin de Verdi\`ere graph parameter} $\mu(G)$ is defined as the
maximum of $\corank(M)$ over matrices $M\in\mc{M}(G)$ satisfying SAH.

\subsection{Semivalid representations of graphs}
\label{ss:semivalid}

We collect some of the easy, but important properties of matrices in
$\mc{M}(G)$ in the following lemma. The proofs can be found, for instance, in a
survey by \citet[Sec.~2.5]{CdV_main}\footnote{A global convention of
\citep[Sec.~2.5]{CdV_main} is that the matrices $M$ considered there satisfy
SAH. However, SAH is not used in the proof of the properties asserted in Lemma~\ref{lemma:basic}.}.

\begin{lemma}\label{lemma:basic}
Let $G=(V,E)$ be a connected graph and $M\in\mc{M}(G)$. Let $x\in\ker(M)$ be nonzero, then
\begin{enumerate}
	\item[{\normalfont(\mylabel{lemma:basic_1}{i})}] $N(\supp(x))=N(\supp_-(x))\cap N(\supp_+(x))$,
	\item[{\normalfont(\mylabel{lemma:basic_2}{ii})}] if $\indg{G}{\supp_+(x)}$ is disconnected, then there is no edge between $\supp_+(x)$ and $\supp_-(x)$, and moreover, for every connected component $C$ of $\indg{G}{\supp(x)}$ we have $N(C)=N(\supp(x))$,
	\item[{\normalfont(\mylabel{lemma:basic_3}{iii})}] if $\supp(x)$ is inclusion-minimal among nonzero vectors in $\ker(M)$, then both graphs $\indg{G}{\supp_+(x)}$ and $\indg{G}{\supp_-(x)}$ are nonempty and connected.\footnote{This part is originally due to \citet{vdHolst_planar}.}
\end{enumerate}
\end{lemma}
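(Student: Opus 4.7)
For part~(i), the inclusion $N(\supp(x))\supseteq N(\supp_+(x))\cap N(\supp_-(x))$ is immediate from $\supp(x)=\supp_+(x)\cup\supp_-(x)$. For the reverse, pick $v\in N(\supp(x))$; then $x_v=0$, so the equation $(Mx)_v=0$ reduces to $\sum_{u\in N(v)}M_{v,u}x_u=0$. Because each $M_{v,u}$ with $u\in N(v)$ is strictly negative by the Schr\"odinger sign condition, this sum can vanish only if $v$ has at least one neighbor in $\supp_+(x)$ and at least one in $\supp_-(x)$, which gives the desired inclusion.

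For parts~(ii) and~(iii) the workhorse is the Perron--Frobenius information recalled just before the lemma: the smallest eigenvalue $\lambda_1$ of $M$ is simple with a strictly positive eigenvector $\phi$, it is the unique negative eigenvalue, and every $x\in\ker M$ is orthogonal to $\phi$; in particular, any $w\perp\phi$ with $w^{\top}Mw\le 0$ must lie in $\ker M$. For~(ii), suppose $\indg{G}{\supp_+(x)}$ has components $C_1,\ldots,C_k$ with $k\ge 2$, and set $y_i:=x|_{C_i}$ and $y_-:=x|_{\supp_-(x)}$. Since for $v\in C_i$ every neighbor of $v$ in $\supp_+(x)$ lies inside $C_i$ (component separation), the equation $(Mx)_v=0$ rewrites $(My_i)_v$ as a signed sum over the edges from $v$ to $\supp_-(x)$. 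A short computation shows that the Gram matrix of $M$ on $\Span(y_1,\ldots,y_k,y_-)$ has a fixed sign pattern (each $y_i^{\top}My_j$ with $i\ne j$ vanishes, while the remaining entries have signs dictated by the edges between $C_i$ and $\supp_-(x)$) and admits the all-ones vector as a null direction, since $\sum_i y_i+y_-=x\in\ker M$. Combined with $y_i^{\top}\phi>0$ for every $i$ and the single-negative-eigenvalue constraint on $M$, a signature count via Cauchy interlacing then forces each $y_i^{\top}My_-$ to vanish: there are no edges between $\supp_+(x)$ and $\supp_-(x)$. The second half of~(ii) follows from~(i): each connected component $C$ of $\indg{G}{\supp(x)}$ now lies entirely in $\supp_+(x)$ or entirely in $\supp_-(x)$, so by~(i) every $v\in N(\supp(x))$ must have a neighbor of each sign and hence a neighbor in $C$, giving $N(C)\supseteq N(\supp(x))$; the reverse inclusion is trivial.

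For~(iii), nonemptiness of $\indg{G}{\supp_{\pm}(x)}$ is the Perron--Frobenius fact already noted. For connectedness, suppose by contradiction that $\indg{G}{\supp_+(x)}$ splits as $C_1\sqcup(\supp_+(x)\setminus C_1)$. By~(ii) there are no edges between $\supp_+(x)$ and $\supp_-(x)$, and the same bookkeeping as in~(ii) shows that $y:=x|_{C_1\cup\supp_-(x)}$ satisfies $(My)_v=0$ at every $v\in\supp(x)$, so $y^{\top}My=0$. If $y\perp\phi$, then $y\in\ker M$ by the spectral criterion with $\supp(y)\subsetneq\supp(x)$, contradicting minimality. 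If $y\not\perp\phi$, setting $z:=x-y$ one further gets $y^{\top}Mz=z^{\top}Mz=0$, so $\Span(y,z)$ is totally isotropic for $M$ and meets $\phi^{\perp}$ only in $\Span(x)$; a finer signature count, performed simultaneously over all components of $\indg{G}{\supp_+(x)}$ and $\indg{G}{\supp_-(x)}$ as in~(ii), produces a kernel vector with strictly smaller support, once again contradicting minimality. The main obstacle is the signature book-keeping in~(ii): one must conclude that every individual off-diagonal Gram entry vanishes, not merely their sum, and this is the step where the single-negative-eigenvalue constraint has to be used sharply together with the signs of $y_i^{\top}\phi$. Once~(ii) is available, (i) and~(iii) follow by comparatively short Perron--Frobenius computations.
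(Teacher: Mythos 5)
First, note that the paper does not prove this lemma at all --- it cites \citet[Sec.~2.5]{CdV_main} --- so your attempt has to be measured against the standard argument rather than against an in-text proof. Your part~(i) is correct, and your overall strategy for (ii)--(iii) (restrict $x$ to the components $C_i$ of $\indg{G}{\supp_+(x)}$, observe that the Gram matrix of $M$ on $\Span(y_1,\dots,y_k,y_-)$ is the negative of a weighted Laplacian and hence negative semidefinite with $\mathbf{1}$ in its kernel, and play this against the single negative eigenvalue and the positivity of $y_i^{\top}\phi$) is exactly the right machinery. But there are two genuine gaps. The first is the step you yourself flag as ``the main obstacle'': the signature count alone only yields that the rank of the Gram matrix is at most $1$, i.e.\ that \emph{at most one} component $C_i$ sends edges to $\supp_-(x)$; it does not make every off-diagonal entry vanish. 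The missing argument is roughly this: if the rank is exactly $1$, then the radical of $M$ restricted to $W=\Span(y_1,\dots,y_k,y_-)$ has codimension $1$ in $W$; on the other hand $W\cap\phi^{\perp}$ also has codimension $1$ in $W$ and is contained in $\ker(M)$ (hence in the radical) by your spectral criterion, so the radical equals $W\cap\phi^{\perp}$. But when $k\ge 2$ some $C_i$ is not joined to $\supp_-(x)$, and the corresponding $y_i$ lies in the radical while $y_i^{\top}\phi>0$ --- a contradiction. Without this (or an equivalent) step the first half of (ii) is not proved, and since (iii) leans on (ii), it inherits the gap. The incidental benefit of the argument, namely that $W\cap\phi^{\perp}\subseteq\ker(M)$ has dimension $k+l-1$, is also exactly what your sketch of (iii) needs to produce a kernel vector with strictly smaller support, so spelling it out once pays for both parts.

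The second gap is an actual non sequitur: the claim that $N(C)\supseteq N(\supp(x))$ ``follows from (i)''. Part~(i) gives a vertex $v\in N(\supp(x))$ a neighbour somewhere in $\supp_+(x)$ and somewhere in $\supp_-(x)$; it does not give it a neighbour in every \emph{component}. With, say, components $C_1,C_2\subseteq\supp_+(x)$ and $D_1=\supp_-(x)$, a vertex adjacent to $C_1$ and $D_1$ but not to $C_2$ satisfies the conclusion of (i) yet violates $N(C_2)=N(\supp(x))$. To repair this, use the subspace $W\cap\phi^{\perp}\subseteq\ker(M)$ from the previous paragraph: for any two components $C,C'$ of $\indg{G}{\supp(x)}$ it contains a nonzero vector $w$ with $\supp_+(w)=C$ and $\supp_-(w)=C'$ (take the appropriate combination $a'\,x|_{C}-a\,x|_{C'}$ with $a=\phi^{\top}x|_{C}$, $a'=\phi^{\top}x|_{C'}$, up to sign), and applying part~(i) to $w$ shows $N(C)=N(C')$; since there are no edges between distinct components, $N(\supp(x))$ is the union of the $N(C)$'s and the equality follows. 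Alternatively one can argue that for each $v\notin\supp(x)$ the vector of weighted adjacencies of $v$ to the components is proportional to $(\phi^{\top}x|_{C})_C$, whose entries are all nonzero, so $v$ is adjacent either to all components or to none.
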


Motivated by the parameter $\mu$, \citet{lambda} introduced the invariant $\lambda(G)$ defined as follows. We say that a linear subspace $X\subseteq \R^V$ is a \emph{valid representation} of the graph $G$, if for every nonzero $x\in X$ the graph $G[\supp_+(x)]$ is nonempty and connected. Then $\lambda(G)$ is defined as the maximum of $\dim(X)$ over all valid representations $X$ of $G$.

Among other properties, \citet{lambda} proved that $\lambda$ is minor monotone and characterized the classes of graphs with $\lambda(G)\leq 1,2,3$. From this characterization it follows that the parameters $\lambda$ and $\mu$ differ already for those small values. In general, $\lambda$ can be both greater or smaller than $\mu$ (see \citep{lambda, Pendavingh_separation}).

Somewhat analogously to the notion of a valid representation, we introduce the following definition:
\begin{defn}[Semivalid representation]\label{def:ex_represent}
Given a connected graph $G=(V,E)$ we call a linear subspace $L\subseteq\R^V$ a \emph{semivalid representation}\footnote{In the first version of the
present work \citep{kaluza_tancer_soda}, we were using a notion of an \emph{extended
representation} with a very similar definition: it had the same properties as in the
current definition, but in addition it was assumed to lie in $\ker(M)$ of some
$M\in\mc{M}(G)$. We found this extra assumption somewhat unpleasant, thus
we spent an extra effort on removing it from this key definition. But this doesn't mean that the proofs
of the main results are more complicated---only a few details are slightly
different.}
of $G$ if, for every nonzero $x\in L$,
\begin{enumerate}
	\item[{\normalfont(\mylabel{def:ex_rep_0}{i})}] both $\supp_+(x)$ and $\supp_-(x)$ are nonempty,
	\item[{\normalfont(\mylabel{def:ex_rep_1}{ii})}] the graph $\indg{G}{\supp_+(x)}$ is either connected, or $\indg{G}{\supp_+(x)}$ has two connected components and $\indg{G}{\supp_-(x)}$ is connected,
	\item[{\normalfont(\mylabel{def:ex_rep_2}{iii})}] if $x$ has inclusion-minimal support in $L$, both $\indg{G}{\supp_+(x)}$ and $\indg{G}{\supp_-(x)}$ are nonempty and connected,
	\item[{\normalfont(\mylabel{def:ex_rep_4}{iv})}] if $\indg{G}{\supp_+(x)}$ is disconnected, then there is no edge between $\supp_+(x)$ and $\supp_-(x)$, and moreover, for every connected component $C$ of $\indg{G}{\supp(x)}$ we have $N(C)=N(\supp(x))$.
\end{enumerate}
\end{defn}


We will use semivalid representations of $G$ as a substitute for $\ker(M)$ in case we want to work with $M$ not necessarily satisfying SAH. This is enabled by the following lemma taken from \citet{Pendavingh_separation}, which together with Lemma~\ref{lemma:basic} implies that the kernel of $M\in\mc{M}(G)$ satisfying SAH defines a semivalid representation of $G$:
\begin{lemma}[{\citep[Lem.~3]{Pendavingh_separation}}]\label{lemma:Pendavingh}
Let $G$ be a connected graph and $M\in\mc{M}(G)$. Let $x\in\ker(M)$ and set
\[
D:=\set{y\in\ker(M)\colon\supp(y)\subseteq\supp(x)}.
\]
If $\indg{G}{\supp(x)}$ is disconnected, it has exactly $\dim(D)+1$ connected components. If, in addition, $M$ satisfies SAH, then $\dim(D)\leq 2$.
\end{lemma}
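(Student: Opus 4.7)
The plan is to reduce $D$ to the kernel of an explicit finite-dimensional matrix $\beta$ built from the components of $G[\supp(x)]$, pin down $\rank\beta$ via Perron--Frobenius arguments on suitable principal submatrices of $M$, and then deduce the SAH bound by constructing a specific matrix that violates SAH whenever there are too many components.

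I first set up the block decomposition. Write $S=\supp(x)$ and let $C_1,\dots,C_k$ be the components of $G[S]$. The disconnectedness of $G[S]$ forces (by Lemma~\ref{lemma:basic}(\ref{lemma:basic_2}) applied to $x$ or $-x$, or a direct case-check when both $G[\supp_\pm(x)]$ are connected) that there are no edges of $G$ between $\supp_+(x)$ and $\supp_-(x)$; hence each $C_i$ is contained in one of these sets and $x_{C_i}$ has constant sign. The block structure of $M[S,S]$ together with $(Mx)_v=0$ for $v\in C_i$ gives $M[C_i,C_i]\,x_{C_i}=0$. Since $G[C_i]$ is connected and $x_{C_i}$ has constant sign, the Perron--Frobenius argument recalled in Subsection~\ref{ss:semivalid} identifies zero as the simple smallest eigenvalue of $M[C_i,C_i]$ and yields $\ker M[C_i,C_i]=\Span(x_{C_i})$. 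Writing $x^{(i)}$ for $x_{C_i}$ extended by zero, every $y\in D$ therefore decomposes as $y=\sum_i\alpha_i x^{(i)}$ with $\alpha\in\R^k$, and the remaining condition $(My)_v=0$ for $v\in V\setminus S$ rewrites as $\sum_i\alpha_i(Mx^{(i)})_v=0$. Letting $\beta$ be the $(V\setminus S)\times k$ matrix with $\beta_{v,i}=(Mx^{(i)})_v$, we have $D\cong\ker\beta$ and $\dim D=k-\rank\beta$; since $x=\sum_i x^{(i)}\in\ker M$, the constant vector $(1,\dots,1)$ lies in $\ker\beta$, giving $\dim D\ge 1$ and $\rank\beta\le k-1$.

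I expect the main obstacle to be showing $\rank\beta=1$, i.e.\ that the vectors $Mx^{(i)}$ are pairwise proportional. The plan is to fix any $v\in N(\supp(x))$ and study the principal submatrix $M''=M[S\cup\{v\},S\cup\{v\}]$. Lemma~\ref{lemma:basic}(\ref{lemma:basic_2}), applied whenever some $G[\supp_\pm(x)]$ is disconnected, gives $N(C_i)=N(\supp(x))$, so $v$ is adjacent to every $C_i$ and $G[S\cup\{v\}]$ is connected; in the remaining sub-case in which both $G[\supp_\pm(x)]$ are connected we have $k=2$ and rank one is automatic from $\sum_i Mx^{(i)}=0$. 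The range condition for solvability of $M[C_i,C_i]\,y_{C_i}=-y_v M[C_i,v]$ and the one-dimensionality of each $\ker M[C_i,C_i]$ force $y_v=0$ and yield a single linear constraint on $\alpha$, so $\dim\ker M''=k-1$. Combining this with $D\subseteq\ker M''$ for every such $v$, and cross-referencing with the larger principal submatrices $M[S\cup\{v,v'\},S\cup\{v,v'\}]$ via Cauchy interlacing against $M$ (which has only one negative eigenvalue) and Perron--Frobenius, will force the rows $\beta_{v,\cdot}$ and $\beta_{v',\cdot}$ to be proportional for every pair $v,v'\in N(\supp(x))$, giving $\rank\beta=1$ and $\dim D=k-1$.

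For the SAH bound $\dim D\le 2$, I argue by contradiction from $k\ge 4$. The rank-one structure supplies $w\in\R^{V\setminus S}$ and scalars $c_i\ne 0$ (note $x^{(i)}\notin\ker M$ because it has constant sign) with $\sum_i c_i=0$ and $Mx^{(i)}=c_i w$. Take
\[
X=\sum_{i\ne j}a_{ij}\,x^{(i)}(x^{(j)})^T
\]
with symmetric coefficients $a_{ij}$ and $a_{ii}=0$. Because distinct $C_i,C_j$ have no edges between them and the $x^{(i)}$'s have pairwise disjoint supports, $X$ is symmetric with $X_{uv}=0$ whenever $u=v$ or $uv\in E$. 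A direct calculation gives $MX=w\cdot\bigl(\sum_j(\sum_{i\ne j}a_{ij}c_i)(x^{(j)})^T\bigr)$, so $MX=0$ is equivalent to the $k$ linear conditions $\sum_{i\ne j}a_{ij}c_i=0$ on the $\binom{k}{2}$ unknowns $a_{ij}$. For $k\ge 4$ we have $\binom{k}{2}>k$, so a nonzero solution exists, producing a nonzero $X$ with $MX=0$ and the required sparsity pattern, contradicting SAH. Therefore $k\le 3$ and $\dim D\le 2$.
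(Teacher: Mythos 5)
First, note that the paper does not prove this lemma at all---it is imported verbatim from Pendavingh \citep[Lem.~3]{Pendavingh_separation}, so there is no in-paper proof to compare against; your argument has to stand on its own (and it follows the same general lines as the original: component decomposition, Perron--Frobenius on the diagonal blocks, and an explicit SAH-violating matrix). Your setup is correct: the absence of edges between $\supp_+(x)$ and $\supp_-(x)$, the identification $\ker M[C_i,C_i]=\Span(x_{C_i})$, the isomorphism $D\cong\ker\beta$, the exhaustive case split ($k=2$ exactly when both $G[\supp_\pm(x)]$ are connected, and $N(C_i)=N(\supp(x))$ otherwise), and the entire SAH paragraph (the sparsity pattern of $X$, the computation of $MX$, and the count $\binom{k}{2}>k$ for $k\ge 4$) all check out.

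The one genuine gap is the pivotal claim $\rank\beta=1$: you assert that interlacing and Perron--Frobenius ``will force'' the rows $\beta_{v,\cdot}$ and $\beta_{v',\cdot}$ to be proportional, but you never run the argument, and this is where essentially all the content of the first assertion sits. The ingredients you name do suffice; here is the missing chain. Assume $k\ge 3$. Since $G[S\cup\{v\}]$ is connected, Perron--Frobenius makes the smallest eigenvalue of $M''=M[S\cup\{v\}]$ simple; as you showed $\corank M''=k-1\ge 2$, that smallest eigenvalue cannot be $0$, so $M''$ has a negative eigenvalue, hence exactly one (Cauchy interlacing against $M$ caps it at one). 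Now interlace $M'''=M[S\cup\{v,v'\}]$ against both $M$ and $M''$: interlacing against $M''$ gives $\lambda_j(M''')\le\lambda_j(M'')=0$ for $j=2,\dots,k$, while having at most one negative eigenvalue gives $\lambda_j(M''')\ge\lambda_2(M''')\ge 0$ for $j\ge 2$; hence $\corank M'''\ge k-1$. On the other hand, repeat your solvability analysis for $M'''$: projecting the block equation on $C_i$ onto $x_{C_i}$ gives $y_v\beta_{v,i}+y_{v'}\beta_{v',i}=0$ for every $i$, so if the two rows were linearly independent then $y_v=y_{v'}=0$, and the two remaining equations at $v$ and $v'$ (namely $\sum_i\alpha_i\beta_{v,i}=0$ and $\sum_i\alpha_i\beta_{v',i}=0$, independent by the same assumption) cut the $k$-dimensional space of coefficients $\alpha$ down to dimension $k-2$, i.e.\ $\corank M'''=k-2$, a contradiction. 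With this step written out, your proof is complete.
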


Similarly to the graph parameter $\lambda$ introduced by \citet{lambda}, we
define an analogous parameter $\eta(G)$:
\begin{defn}
Let $G$ be a graph. If $G$ is connected, we define
\[
\eta(G):=\max\set{\dim(L)\colon L\text{ is a semivalid representation of }G}.
\]
For convenience, we also extend the definition to disconnected graphs $G$. If
  $G$ has at least one edge, then we define
\[
\eta(G):=\max_C\eta\br*{\indg{G}{C}},
\]
where the maximum is taken over connected components $C$ of $G$.
If $G$ is disconnected and does not have any edge, then we set $\eta(G):=1$.

\end{defn}
Lemmas~\ref{lemma:basic} and \ref{lemma:Pendavingh} show that $\mu(G)\leq\eta(G)$ for every connected graph $G$.
The definition of $\eta$ for disconnected graphs is chosen in a way that agrees precisely with the behavior of $\mu$:
In \citep[Thm.~2.5]{CdV_main} it is shown that $\mu(G)$ is equal to the maximum of $\mu$ over the connected components of $G$ if $G$ has at least one edge. Moreover, it is easy to see that $\mu$ of the empty graph on $n\geq 2$ vertices is $1$ (or see, e.g., \citep[Sec.~1.2]{CdV_main}).

Comparing the definitions of valid and semivalid representations, it is clear that every valid representation is also a semivalid representation.
Since for disconnected graphs $\lambda$ exhibits exactly the same type of
behavior as $\mu$ and $\eta$ with respect to the connected components, which is easy to see directly from the definition of $\lambda$, we get that $\eta(G)$ is always an upper bound on $\lambda(G)$ for any graph $G$.
Summarizing, we get the following:

\begin{obs}\label{obs:mu_eta}
For every graph $G$ it holds that $\max\set{\mu(G),\lambda(G)}\leq\eta(G)$.\qed
\end{obs}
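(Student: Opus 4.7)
My plan is to establish the two inequalities $\mu(G)\le\eta(G)$ and $\lambda(G)\le\eta(G)$ separately in the connected case, and then lift both to disconnected graphs via the standard component decompositions.

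For the connected case of $\mu(G)\le\eta(G)$, I take $M\in\mc{M}(G)$ satisfying SAH with $\corank(M)=\mu(G)$ and argue that $L:=\ker(M)$ is a semivalid representation, so its dimension bounds $\eta(G)$ from below. Conditions \ref{def:ex_rep_0}, \ref{def:ex_rep_2}, and \ref{def:ex_rep_4} of Definition~\ref{def:ex_represent} fall out directly from Lemma~\ref{lemma:basic} together with the Perron--Frobenius remark in Subsection~\ref{ss:semivalid} giving that $\supp_+(x)$ and $\supp_-(x)$ are both nonempty for nonzero $x\in\ker(M)$. The substantive work is condition \ref{def:ex_rep_1}: if $\indg{G}{\supp_+(x)}$ is disconnected then by Lemma~\ref{lemma:basic_2} there are no edges between $\supp_+(x)$ and $\supp_-(x)$, so the number of connected components of $\indg{G}{\supp(x)}$ equals $k_++k_-$, where $k_\pm$ denotes the number of components of $\indg{G}{\supp_\pm(x)}$. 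Lemma~\ref{lemma:Pendavingh} combined with SAH gives $k_++k_-=\dim(D)+1\le 3$, and since we already assume $k_+\ge 2$ and $k_-\ge 1$ this forces $k_+=2$ and $k_-=1$, which is exactly condition \ref{def:ex_rep_1}.

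The inequality $\lambda(G)\le\eta(G)$ for connected $G$ is easier: I will show that every valid representation $X\subseteq\R^V$ is automatically semivalid. If $x\in X$ is nonzero, then $-x\in X$ as well, so both $\indg{G}{\supp_+(x)}$ and $\indg{G}{\supp_-(x)}=\indg{G}{\supp_+(-x)}$ are nonempty and connected; this immediately verifies conditions \ref{def:ex_rep_0}, \ref{def:ex_rep_1}, and \ref{def:ex_rep_2} and renders condition \ref{def:ex_rep_4} vacuous.

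Finally, to extend to a disconnected graph $G$, I will combine the equalities $\mu(G)=\max_C\mu(\indg{G}{C})$ from \citep[Thm.~2.5]{CdV_main} and the analogous statement for $\lambda$ (which follows straight from the definition, since in a valid representation any nonzero vector has support contained in a single connected component of $G$) with the defining formula for $\eta$ on disconnected graphs with at least one edge. For the remaining edgeless case with $|V|\ge 2$ one checks directly that $\mu(G)=\lambda(G)=1=\eta(G)$. Overall, the only nontrivial step is the SAH-based component count for condition \ref{def:ex_rep_1}, and the rest is a matter of bookkeeping.
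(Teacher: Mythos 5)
Your argument is essentially the paper's: the observation is stated with an empty proof precisely because the preceding discussion already establishes it, namely that $\ker(M)$ of an SAH matrix $M\in\mc{M}(G)$ is a semivalid representation (via Lemmas~\ref{lemma:basic} and~\ref{lemma:Pendavingh}, with your SAH-based count $k_++k_-\leq 3$ being the intended use of Lemma~\ref{lemma:Pendavingh}), that every valid representation is semivalid, and that all three parameters decompose over components in the same way, with the edgeless case handled separately. So the route and all the key ingredients coincide.

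One caveat: your parenthetical justification for the component formula for $\lambda$ is false. In a valid representation a nonzero vector need \emph{not} have its support inside a single component: only $\supp_+(x)$ and $\supp_+(-x)$ are each forced into a single component, and these may be different components (e.g.\ $\Span(e_u-e_v)$ with $u,v$ in different components is a valid representation). The formula $\lambda(G)=\max_C\lambda(\indg{G}{C})$ for $G$ with at least one edge is nonetheless true --- and the paper likewise invokes it without proof as ``easy to see directly from the definition'' --- but establishing it requires a short argument beyond the one you give (and note it genuinely fails for edgeless disconnected graphs, where $\lambda(G)=1$ exceeds the componentwise maximum $0$; this is exactly why $\eta$ is defined to be $1$ there). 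So either supply that argument or cite it; the one-line reason as written does not hold up.
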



\subsection{Topological preliminaries}\label{s:prelim}
\paragraph{Polyhedra.} A set $\tau' \subset \R^k$ is a \emph{closed}
(convex) \emph{polyhedron} if it is an intersection of finitely many closed
half-spaces. A \emph{closed face} of a polyhedron $\tau'$ is a subset $\eta' \subseteq
\tau'$
such that there exists a hyperplane $h$ satisfying that $\eta' = h \cap \tau'$ and
$\tau'$ belongs to one of the closed half-spaces determined by $h$. A
\emph{relatively open polyhedron} is the relative interior $\tau$ of a closed
polyhedron $\tau'$ (the relative interior is taken with respect to the affine hull
of $\tau'$).

\paragraph{Important convention.} In the sequel, when we say \emph{polyhedron}, we mean
relatively open polyhedron. This is nonstandard, but it will be very
convenient for our considerations. Given a polyhedron $\tau$, by $\cl{\tau}$ we
denote the closure of $\tau$, that is, the corresponding closed polyhedron. 
We also say that a (relatively open)
polyhedron $\eta$ is a face of $\tau$ if $\cl \eta$ is a 
closed face of $\cl \tau$.

\paragraph{Polyhedral complexes.} A \emph{polyhedral complex} is a collection
$\C$ of polyhedra satisfying:
\begin{enumerate}[(i)]
\item
If $\tau \in \C$ and $\eta$ is a face of $\tau$, then $\eta \in \C$.
\item
  If $\theta, \tau \in \C$, then $\cl \theta \cap \cl \tau$ is a closed face of $\cl
  \theta$ as well as a closed face of $\cl \tau$.
\end{enumerate}

The \emph{body} of a polyhedral complex $\C$ is defined as $|\C| := \bigcup \C$. Due to our convention that we consider relatively open polyhedra, $|\C|$ is a
disjoint union of polyhedra contained in $\C$.

Given a polyhedron $\tau$, by $\partial \tau$ we denote the boundary of $\tau$.
With a slight abuse of notation, depending on the context, this may be
understood both as a polyhedral complex formed by the proper faces of $\tau$ as well as
the topological boundary of $\tau$, that is, the body of the former one. 

The \emph{$k$-skeleton} of a polyhedral complex $\C$ is the subcomplex $\C^{(k)}$
consisting of all faces of $\C$ of dimension at most $k$. 

In our considerations, we will need two special classes of polyhedra:
simplicial complexes and fans.

\paragraph{Simplicial complexes.} A polyhedral complex is a \emph{simplicial
complex} if each polyhedron in the complex is a simplex. (Consistently with our
convention above, by a simplex we mean a relatively open simplex.)

\paragraph{Fans.} 
A cone is a polyhedron $\alpha\subseteq\R^k$ such that $rx\in\alpha$ whenever
$x\in\alpha$ and $r\in(0,\infty)$.
A polyhedral complex $\F$ is a \emph{fan} if each polyhedron
in $\F$ is a cone, and moreover, if $\F$ contains a nonempty
polyhedron, then $\F$ contains the origin as a polyhedron.
A fan is \emph{complete} if $|\F| = \R^k$.


\paragraph{Subdivisions.} Let $\C$ be a polyhedral complex. A polyhedral
complex $\D$ is a \emph{subdivision} of $\C$ if $|\C| = |\D|$ and for every $\eta \in \D$, there is $\tau$ in $\C$ containing $\eta$.

\paragraph{Fans and polytopes.} By a \emph{polytope} we mean a bounded polyhedron. Let $\PP \subseteq \R^k$ be a polytope such
that the origin is in the interior of $\PP$. Then $\PP$ defines a complete
fan $\F(\PP)$ formed by the cones over the proper faces of $\PP$ (plus the empty set). Again,
we consider the faces of $\PP$ relatively open. With a slight abuse of
terminology, we say that $\PP$ \emph{subdivides} a fan $\F'$ if $\F(\PP)$ subdivides
$\F'$; see Figure~\ref{f:subdivided_fan}.

\begin{figure}
\begin{center}
\includegraphics{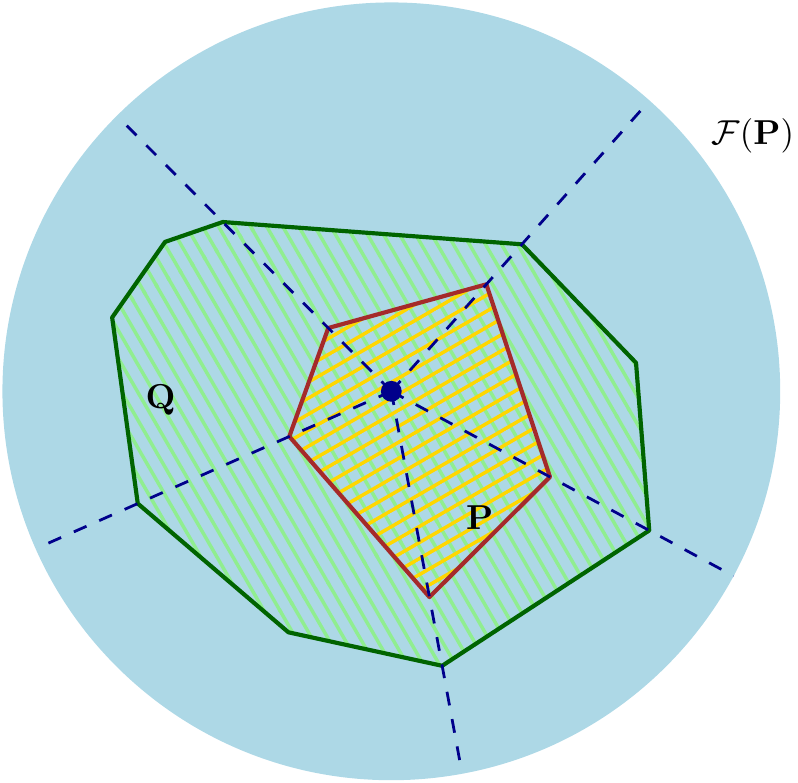}
  \caption{A polytope $\PP$, the fan $\F(\PP)$ and a polytope $\QQ$ subdividing
  $\F(\PP)$.}
\label{f:subdivided_fan}
\end{center}
\end{figure}

\paragraph{Barycentric subdivisions.} Now let $\K$ be a simplicial complex. For
every nonempty simplex $\tau \in K$ let $b_\tau$ be the barycenter of $\tau$.
For two faces $\eta$ and $\tau$ of $\K$, let $\eta \prec \tau$ denote that
$\eta$ is a proper face of $\tau$.
The barycentric subdivision of $\K$, denoted $\sd \K$, is a simplicial complex
obtained so that for every chain $\Gamma = \theta_1 \prec \theta_2 \prec
\cdots \prec \theta_m$ of nonempty faces of $\K$ we add a simplex, denoted
$\Delta(\Gamma)$, with vertices $b_{\theta_1}, \dots, b_{\theta_m}$ into $\sd
\K$. It is well known that $\sd \K$ subdivides $\K$. In particular,
$\Delta(\Gamma) \subset \theta_m$. 


\begin{obs}
  \label{o:edge_in_edge}
  Let $\K$ be a simplicial complex and $\Delta$ be a simplex of the barycentric
  subdivision $\sd \K$. Let $\Delta_1$ and $\Delta_2$ be two faces of $\Delta$
  and $\eta_1 \supseteq \Delta_1$ and $\eta_2 \supseteq \Delta_2$ be two faces of
  $\K$. Then either $\eta_1$ is a face of $\eta_2$ or $\eta_2$ is a face of
  $\eta_1$.  \end{obs}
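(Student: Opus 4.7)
The plan is to invoke the paper's standing convention that all polyhedra---including the simplices of $\K$ and of $\sd\K$---are relatively open. Under this convention, distinct open simplices of $\K$ are pairwise disjoint inside $|\K|$, and this disjointness will pin down each $\eta_j$ uniquely from $\Delta_j$, after which the observation reduces to the chain structure on $\sd\K$.

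First I would write $\Delta=\Delta(\Gamma)$ for some chain $\Gamma=\theta_1\prec\cdots\prec\theta_m$ of nonempty faces of $\K$, so that the vertices of $\Delta$ are $b_{\theta_1},\dots,b_{\theta_m}$. Any nonempty face $\Delta_j$ of $\Delta$ is spanned by a subset of these vertices, and hence equals $\Delta(\Gamma_j)$ for some subchain $\Gamma_j$ of $\Gamma$. Let $\theta^{(j)}$ denote the top element of $\Gamma_j$. The final sentence of the \emph{Barycentric subdivisions} paragraph, applied to $\Gamma_j$, gives $\Delta_j\subseteq\theta^{(j)}$.

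Next I would argue that $\eta_j=\theta^{(j)}$. Indeed, $\eta_j$ is a relatively open simplex of $\K$ containing the nonempty set $\Delta_j$, while $\theta^{(j)}$ is another relatively open simplex of $\K$ also containing $\Delta_j$; since $|\K|$ is the disjoint union of its open simplices, the existence of a common point forces $\eta_j=\theta^{(j)}$. Finally, since $\theta^{(1)}$ and $\theta^{(2)}$ are both members of the totally ordered chain $\Gamma$, one is a face of the other, so the same is true of $\eta_1$ and $\eta_2$.

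The only subtle point---and really the whole crux---is the use of the relatively-open convention to conclude $\eta_j=\theta^{(j)}$; everything else is straightforward bookkeeping on chains in $\sd\K$. (The degenerate case $\Delta_j=\emptyset$, if admitted as a face, is vacuous, since the empty simplex is a face of every simplex of $\K$.)
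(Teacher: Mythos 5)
Your proof is correct and is essentially the paper's own argument: identify $\Delta$ with a chain $\Gamma$, each $\Delta_j$ with a subchain whose top element is the unique open face of $\K$ containing it, and conclude from the total order on $\Gamma$. The only difference is that you spell out the uniqueness step via the disjointness of relatively open simplices, which the paper leaves implicit.
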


\begin{proof}
The face $\Delta$ corresponds to a chain $\Gamma = \theta_1 \prec \cdots \prec
  \theta_m$ of faces of $\K$. 
%
%
  Then $\Delta_1$ corresponds to a subchain $\Gamma_1$ of $\Gamma$ with maximal face
  $\theta_i$ (for some $i$). Then $\theta_i$ is the (unique) face of $\K$ containing $\Delta_1 =
  \Delta(\Gamma_1)$. Therefore $\eta_1 = \theta_i$. 
  Similarly, $\eta_2 = \theta_j$ for some $j$, from which the
  conclusion follows.
\end{proof}

%
%
%
%
%


Before we state the next lemma, we introduce two more well-known notions. Let
$\K$ be a simplicial complex and $|\L|$ be the body of some subcomplex
$\L$ of $\K$. We define the
\emph{simplicial neighborhood} of $|\L|$ in $\K$ as \[\Nc(|\L|, \K) := \{\eta \in
\K\colon \eta \subset \cl \tau \hbox{ for some }\tau \hbox{ with } \cl \tau
\cap |\L| \neq \emptyset\}.\] If $\L$ consists of a single vertex $a$, then the
simplicial neighborhood is known as (closed) \emph{star} of $a$ in $\K$,
denoted by $\st(a; \K)$.

\begin{lemma}
\label{l:derived_neighborhoods}
Let $\K$ be a simplicial complex and let $\L_1, \L_2$ be two subcomplexes of
  $\K$ with $|\L_1| \cap |\L_2| = \emptyset$. Let $a$ be a vertex of the second
  barycentric subdivision $\sd^2 \K$. Then the closed star $\st(a; \sd^2 \K)$
  cannot intersect both $|\L_1|$ and $|\L_2|$. 
\end{lemma}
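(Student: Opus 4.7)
The plan is to assign, to each vertex $a$ of $\sd^2\K$, a specific simplex $\theta(a)\in\K$ and to prove the stronger statement: for every simplex $D$ of $\sd^2\K$ having $a$ as a vertex, if $\cl{D}\cap|\L_j|\neq\emptyset$ then $\theta(a)\in\L_j$. Granted this, a closed star meeting both $|\L_1|$ and $|\L_2|$ would force $\theta(a)\in\L_1\cap\L_2$, so the nonempty relatively open simplex $\theta(a)$ would sit inside $|\L_1|\cap|\L_2|$, contradicting their disjointness.

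I will define $\theta(a)$ through two nested barycenter correspondences: $a=b_E$ for a unique simplex $E\in\sd\K$, and in turn $E=\Delta(\theta_1\prec\cdots\prec\theta_m)$ for a unique chain in $\K$; set $\theta(a):=\theta_1$, the bottom simplex of this chain. A simplex $D\in\sd^2\K$ having $a$ as a vertex corresponds to a chain $F_1\prec\cdots\prec F_r$ in $\sd\K$ passing through $E$, say $E=F_k$; write $\sigma_i\in\K$ for the top of $F_i$'s chain in $\K$, so that the vertex $b_{F_i}$ of $D$ lies in the relatively open simplex $\sigma_i$. A short inspection of how chains in $\sd\K$ refine to chains in $\K$ gives $\sigma_1\preceq\cdots\preceq\sigma_r$ and, crucially, $\theta_1\preceq\sigma_i$ for every $i$ — for $i\le k$ the chain of $F_i$ is a subchain of $\theta_1\prec\cdots\prec\theta_m$, so $\sigma_i\in\{\theta_1,\dots,\theta_m\}$, and for $i\ge k$ we have $\sigma_i\succeq\theta_m\succeq\theta_1$.

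The main technical step is then to show that every point of $\cl{D}$ lies in $\sigma_{i^*}$ for some index $i^*$. For a convex combination $p=\sum_i\lambda_ib_{F_i}$, take $i^*:=\max\{i\colon\lambda_i>0\}$; all summands with $\lambda_i>0$ belong to $\cl{\sigma_{i^*}}$, while $b_{F_{i^*}}\in\sigma_{i^*}$ itself, and a convex combination of points in a closed simplex carrying positive weight on a point of the relative interior stays in the relative interior. Because the relatively open simplices of $\K$ partition $|\K|$, the condition $p\in|\L_j|$ then forces $\sigma_{i^*}\in\L_j$, and closure of $\L_j$ under taking faces combined with $\theta_1\preceq\sigma_{i^*}$ finally yields $\theta(a)\in\L_j$. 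I expect the main obstacle to be the uniform comparability $\theta_1\preceq\sigma_i$ for every $i$; this is precisely what the passage to the \emph{second} barycentric subdivision buys (the analogous statement fails for $\sd\K$ alone), and Observation~\ref{o:edge_in_edge} applied to $\sd\K$ in place of $\K$ provides the compact bookkeeping needed.
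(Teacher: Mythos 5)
Your proof is correct, but it takes a different route from the paper. The paper's own argument is essentially a reduction plus a citation: it observes that $\st(a;\sd^2\K)$ can meet $|\L_i|$ only if $a\in\Nc(|\L_i|,\sd^2\K)$, and then invokes the standard fact that second derived (simplicial) neighborhoods of disjoint subcomplexes are disjoint, pointing to Rourke--Sanderson and to an explicit statement in the literature rather than proving it. You instead prove that fact from scratch, at the level of vertices: your carrier assignment $a=b_{\Delta(\theta_1\prec\cdots\prec\theta_m)}\mapsto\theta_1$ and the verification that every point of $\cl{D}$, for $D$ a simplex of $\sd^2\K$ with vertex $a$, lies in an open simplex of $\K$ having $\theta_1$ as a face, together show that $a\in\Nc(|\L_j|,\sd^2\K)$ forces $\theta(a)\in\L_j$ --- which is exactly the disjointness of the two neighborhoods restricted to vertices, and is all the lemma needs. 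The details check out: the comparability $\sigma_1\preceq\cdots\preceq\sigma_r$ and $\theta_1\preceq\sigma_i$ follow from the subchain description of faces in a barycentric subdivision (equivalently, from Observation~\ref{o:edge_in_edge} applied to $\sd\K$); the barycentric-coordinate argument correctly places $\sum_i\lambda_i b_{F_i}$ in the relative interior of $\sigma_{i^*}$; and since the open simplices of $\K$ partition $|\K|$ and $\L_j$ is face-closed, the conclusion $\theta(a)\in\L_1\cap\L_2$ indeed contradicts $|\L_1|\cap|\L_2|=\emptyset$. What your version buys is self-containedness and an explicit explanation of why the \emph{second} subdivision is needed (your one-edge example is exactly the obstruction for $\sd\K$); what the paper's version buys is brevity and a pointer to the general regular-neighborhood machinery.
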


\begin{proof}
  The closed star $\st(a; \sd^2 \K)$ intersects $|\L_i|$ only if $a$
  belongs to $\Nc(|\L_i|, \sd^2 \K) = \Nc(|\sd^2 \L_i|, \sd^2 \K)$.
  The lemma follows from the fact that
  $\Nc(|\L_1|, \sd^2 \K)$ and $\Nc(|\L_2|, \sd^2 \K)$
  are disjoint. (This is a simple exercise on properties of
  simplicial/derived/regular neighborhoods using the tools
  from~\citep{rourke-sanderson82}. An explicit reference for this claim we are
  aware of is Corollary~4.5 in~\citep{tancer-tonkonog13}---embedding in a
  manifold assumed in~\citep{tancer-tonkonog13} plays no role in the proof.)
\end{proof}

\paragraph{Stellar subdivisions of polytopes.}
Let $\PP \subseteq \R^k$ be a polytope such that the origin belongs to the
interior of $\PP$ and let $\FF$ be a face of $\PP$. Let $\aa$ be a point beyond
all facets (i.e. maximal faces) $\FF'$ of $\PP$ such that $\cl \FF \subseteq \cl {\FF'}$ (that is,
$\aa$ and the origin are on different sides of the hyperplane defining $\FF'$)
whereas $\aa$ is beneath all other facets ($\aa$ and the origin are on the same
side of the defining hyperplane). Then the polytope $\PP'$ obtained as the
convex hull of the set of vertices of $\PP$ and $\aa$ is called a \emph{geometric
stellar subdivision} of $\PP$~\citep{ewald-shephard74}. For any
$\FF$, we can pick $\aa$ as above lying inside the cone of $\F(\PP)$ containing
$\FF$. 
Let $p
\colon \partial \PP' \to \partial \PP$ be the projection towards the origin.
Then the complex 
$p(\partial \PP') := \{p(\FF') \colon \FF' \hbox{ is a proper face of } \PP'\}$ 
is a subdivision of the boundary of $\PP$.\footnote{Considering $\partial \PP$
as a polytopal complex, $p(\partial \PP')$ is exactly the stellar subdivision
of $\partial \PP$ as defined in~\cite{ewald-shephard74}
on the level
of polytopal complexes; see also Exercise~3.0 in~\cite{ziegler95}. However, we do not need the exact formula explicitly. It is
sufficient for us that $p(\partial \PP')$ is a subdivision.} Consequently,
$\F(\PP')$ subdivides $\F(\PP)$.

If we perform stellar subdivisions gradually on all proper faces of a polytope
$\PP$ ordered by nonincreasing dimension, we obtain a simplicial polytope. In
fact, we get a polytope isomorphic to a barycentric subdivision of $\PP$;
however, we will use this stronger conclusion only when $\PP$ is already simplicial.
That is, in this case we obtain a polytope $\PP'$ such that the projection
$p\colon \partial \PP' \to \partial \PP$ is a simplicial isomorphism between
$\partial \PP'$ and $\sd \partial \PP$ provided in each step, when performing
individual stellar subdivisions over face $\FF$, the newly added point $\aa$ is
on the ray from the origin containing the barycenter of $\FF$.  For more
details on stellar and barycentric subdivisions of polytopes, we refer
to~\cite{ewald-shephard74}.

\subsection{Fan of a semivalid representation}

Given a semivalid representation $L$ of $G$ we now aim to build a fan $\P =
\P(L)$ (complete in $L$) formed by convex polyhedral cones in a way that corresponds to splitting $L$ by
hyperplanes passing through the origin and perpendicular to the standard basis
vectors of $\R^V$.


\begin{defn}[Fan $\P(L)$]
  \label{d:PL}

Let $L$ be a semivalid representation of $G$ and let us define an equivalence
relation $\sim$ on $\R^V$ by
\[
x\sim y \qquad \Longleftrightarrow\qquad \supp_+(x)=\supp_+(y) \text{ and }
\supp_-(x)=\supp_-(y).
\]
  Each equivalence class $[x]_{\sim}$ is a convex cone in $\R^V$ (relatively
  open), and we define $\E$ to be the fan formed by these cones. 

Then we define $\P = \P(L)$ as the fan obtained by intersecting $\E$ with $L$.
  In other words, the cones of $\P$ are the equivalence classes of $\sim$
  restricted to $L$.
  

If the semivalid representation $L$ is irrelevant or understood from the context, we omit it from the notation and write just $\mc{P}$. We refer to a $k$-dimensional cone as to a \emph{$k$-cone}.
\end{defn}


We extend the notation of support to cones in $\P$, i.e., if $\alpha \in \P$,
then $\supp_\pm(\alpha) := \supp_\pm(x)$ for some $x \in \alpha$. Also, if $A
\subseteq \alpha$ for some $\alpha$ in $\P$, then $\supp_\pm(A) :=
\supp_\pm(\alpha)$. 


We continue with several observations on properties of $\P$. 
\begin{obs}\label{obs:cones}
Let $\alpha,\beta$ be two cones of $\P$. Then $\alpha\subseteq\partial \beta$ if and only if 
\[
\supp_+(\alpha)\subseteq\supp_+(\beta) \quad\text{ and }\quad \supp_-(\alpha)\subseteq\supp_-(\beta)
\]
and at least one of the inclusions is strict.
\end{obs}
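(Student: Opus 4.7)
The plan is to prove the observation by characterizing membership in a cone of $\P$ purely in terms of the pair of sets $(\supp_+, \supp_-)$, and then translating the condition ``$\alpha \subseteq \partial \beta$'' into inclusions of these sets. Recall that a cone $\gamma$ of $\P$ is exactly the set of $x \in L$ with a fixed pair $(\supp_+(x), \supp_-(x))$, and $\partial \beta = \cl{\beta} \setminus \beta$ since $\beta$ is a relatively open polyhedron; two distinct cones of $\P$ are disjoint as they are distinct equivalence classes of $\sim$.

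For the forward direction, I would assume $\alpha \subseteq \partial \beta \subseteq \cl{\beta}$, pick any $x \in \alpha$, and choose a sequence $y_n \in \beta$ with $y_n \to x$. For $v \in \supp_+(\alpha)$ I have $x_v > 0$, hence $(y_n)_v > 0$ eventually, so $v \in \supp_+(\beta)$; the analogous argument yields $\supp_-(\alpha) \subseteq \supp_-(\beta)$. At least one inclusion must be strict because otherwise $\alpha$ and $\beta$ would contain vectors with identical sign patterns, and, being equivalence classes of $\sim$ restricted to $L$, they would coincide, contradicting $\alpha \subseteq \partial\beta$ (which is disjoint from $\beta$).

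For the converse, I would take $x \in \alpha$ and $y \in \beta$ and consider the line segment $x_t := (1-t)x + ty$ for $t \in (0,1]$; since $x, y \in L$, we have $x_t \in L$. A case analysis on a vertex $v \in V$ shows that $\supp_+(x_t) = \supp_+(\beta)$ and $\supp_-(x_t) = \supp_-(\beta)$ for every $t \in (0,1]$: if $v \in \supp_+(\alpha) \subseteq \supp_+(\beta)$ then both $x_v > 0$ and $y_v > 0$; if $v \in \supp_+(\beta) \setminus \supp_+(\alpha)$ then $x_v = 0$ and $y_v > 0$, so $(x_t)_v = t y_v > 0$; if $v \notin \supp_\pm(\beta)$ then $v \notin \supp_\pm(\alpha)$ either (by the assumed inclusions), so $(x_t)_v = 0$; the negative side is symmetric. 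Hence $x_t \in \beta$ for all $t \in (0,1]$, and letting $t \to 0^+$ gives $x \in \cl{\beta}$. Since the strict inclusion forces $\alpha \neq \beta$, the disjointness of cones yields $x \in \cl{\beta} \setminus \beta = \partial \beta$, as required.

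I don't anticipate a serious obstacle here; the statement is essentially bookkeeping for the fan $\P$, and the only subtlety is remembering that the cones are relatively open equivalence classes of $\sim$ inside $L$, so (i) two different cones are automatically disjoint, and (ii) the convex-combination argument stays inside $L$ because $L$ is a linear subspace. No property of the semivalid representation beyond being a linear subspace is actually used for this observation.
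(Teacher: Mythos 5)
Your proof is correct and follows essentially the same route as the paper: the paper's one-line proof asserts exactly what your segment argument establishes (that $\cl{\beta}$ contains every $y\in L$ whose positive and negative supports are contained in those of $\beta$), and handles strictness via the same observation that equality of both supports forces $\alpha=\beta$. You merely spell out the limit and convex-combination details that the paper leaves as "immediate."
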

\begin{proof}
The equivalence follows immediately from the facts that $\partial
  \beta \subseteq \cl \beta$ and $\cl \beta$
contains all $y \in L$ with $\supp_+(y) \subseteq \supp_+(\beta)$ and
$\supp_-(y) \subseteq \supp_-(\beta)$. At least one of the inclusions is strict if and only if
$\alpha \neq \beta$. 
\end{proof}


\begin{cor}\label{cor:cones}
Whenever $\alpha,\beta$ are two cones of $\P$ such that $\alpha\subseteq\partial \beta$, then $\supp_+(\beta)\subseteq V\setminus \supp_-(\alpha)$.
\end{cor}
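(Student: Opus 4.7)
The plan is to derive this directly from Observation~\ref{obs:cones} together with the basic fact that for any vector $x \in \R^V$, the sets $\supp_+(x)$ and $\supp_-(x)$ are disjoint (a coordinate cannot be both strictly positive and strictly negative). This disjointness extends to cones: for any cone $\gamma \in \P$, we have $\supp_+(\gamma) \cap \supp_-(\gamma) = \emptyset$, since these supports are defined via a single representative vector $x \in \gamma$.

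First, I would apply Observation~\ref{obs:cones} to the pair $\alpha \subseteq \partial \beta$, obtaining in particular the inclusion $\supp_-(\alpha) \subseteq \supp_-(\beta)$. Combined with $\supp_+(\beta) \cap \supp_-(\beta) = \emptyset$, this yields $\supp_+(\beta) \cap \supp_-(\alpha) = \emptyset$, which is equivalent to the desired conclusion $\supp_+(\beta) \subseteq V \setminus \supp_-(\alpha)$.

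No further tools are needed; there is no real obstacle, since this is just a short bookkeeping consequence of the preceding observation. The statement is presumably isolated as a corollary because it will be convenient to cite in this exact form in later arguments (likely when relating $\supp_+$ of a higher-dimensional cone to the complement of $\supp_-$ of a lower-dimensional boundary cone, e.g., when routing edges in the proof of the main result).
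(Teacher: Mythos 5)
Your proof is correct and is essentially identical to the paper's: both deduce $\supp_-(\alpha)\subseteq\supp_-(\beta)$ from Observation~\ref{obs:cones} and combine it with the disjointness of $\supp_+(\beta)$ and $\supp_-(\beta)$ via the chain $\supp_+(\beta)\subseteq V\setminus\supp_-(\beta)\subseteq V\setminus\supp_-(\alpha)$. Nothing to add.
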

\begin{proof}
Indeed, $\supp_+(\beta) \subseteq V\setminus \supp_-(\beta) \subseteq V \setminus \supp_-(\alpha)$.
\end{proof}

\begin{cor}\label{cor:1-cones}
A cone $\alpha$ of $\mc{P}=\mc{P}(L)$ is a 1-cone if and only if the vectors of $\alpha$ have inclusion-minimal support among nonzero vectors in $L$.
\end{cor}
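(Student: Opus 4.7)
The plan is to prove both implications by perturbation arguments inside $L$, exploiting that each cone of $\P(L)$ is an equivalence class under equality of $\supp_+$ and $\supp_-$, so all vectors in a single cone share the same support $S$.

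For the forward direction, I will assume $\alpha$ is a $1$-cone with a representative $x$, and seek a contradiction from the existence of a nonzero $y \in L$ with $\supp(y) \subsetneq \supp(x)$. The key observation is that for every $v \notin \supp(x)$ we also have $y_v = 0$, and for $v \in \supp(x)$ the sign of $(x+ty)_v = x_v + t y_v$ equals the sign of $x_v$ for all sufficiently small $|t|$. Hence $x + ty$ lies in $\alpha$ for $|t|$ small. Because $\supp(y) \neq \supp(x)$, the vector $y$ cannot be a scalar multiple of $x$, so $x$ and $x+ty$ span a $2$-dimensional subspace inside $\alpha$, contradicting $\dim \alpha = 1$.

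For the reverse direction, I will assume the vectors of $\alpha$ have inclusion-minimal support in $L$ and show $\dim \alpha \leq 1$ (the inequality $\dim \alpha \geq 1$ is automatic since $\alpha$ contains a nonzero vector). If $\dim\alpha \geq 2$, pick $x,y \in \alpha$ that are linearly independent. Fix any coordinate $v \in S = \supp(x) = \supp(y)$; since $x_v \neq 0 \neq y_v$, the combination $z := y_v\, x - x_v\, y$ lies in $L$, is nonzero by linear independence, and has $z_v = 0$. Thus $\supp(z) \subseteq S \setminus \{v\} \subsetneq \supp(x)$, violating the inclusion-minimality assumption.

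The main (mild) subtlety is checking that the perturbation $x + ty$ really stays inside the same equivalence class $\alpha$ — that is, that both $\supp_+$ and $\supp_-$ are preserved. This is where the inclusion $\supp(y) \subseteq \supp(x)$ is used to keep the zero coordinates zero, while the strict-sign condition on the nonzero coordinates is guaranteed by continuity for small $|t|$. Apart from this, both directions reduce to elementary linear algebra in $\R^V$, and no further structural property of semivalid representations (such as those in Definition~\ref{def:ex_represent}) is invoked, so the corollary is purely a formal consequence of the definition of $\P(L)$.
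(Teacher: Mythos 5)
Your proof is correct and matches the paper's argument in substance: the reverse direction is exactly the paper's trick of cancelling one coordinate by a linear combination of two independent vectors of $\alpha$, and your forward direction is a spelled-out version of what the paper dispatches by citing Observation~\ref{obs:cones} (a vector of strictly smaller support would force $\alpha$ to contain a two-dimensional family of perturbations $x+ty$). No gaps; the only cosmetic difference is that you argue the forward implication directly rather than through the boundary description of cones.
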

\begin{proof}
If $\alpha$ is a 1-cone, then every vector in $\alpha$ has to have inclusion-minimal support among nonzero vectors in $L$ due to Observation~\ref{obs:cones}.
On the other hand, if $\alpha$ contains vectors $x$ with inclusion-minimal supports, then $\dim(\alpha)=1$, otherwise there were two linearly independent vectors $x,y\in\alpha$ and $x-\varepsilon y\in L$ would have strictly smaller support than $x,y$ for an appropriate choice of $\varepsilon>0$.
\end{proof}

\begin{defn}
If $\indg{G}{\supp_+(x)}$ is disconnected for a nonzero $x\in \R^V$, we call
  $x$ a \emph{broken vector}. The cones of 
  $\mc{P}$ consisting of broken vectors are called \emph{broken cones}.
\end{defn}

In the remainder of the present subsection, we always assume that $G=(V,E)$ is a
connected graph, $L\subseteq\R^{V}$ is a semivalid representation of $G$ and
$\mc{P}:=\mc{P}(L)$ is the fan corresponding to $L$.


\begin{lemma}
\label{l:broken_bd_1-cone}
  Let $\beta$ be a broken cone of $\mc{P}$ and $\alpha$ be a cone of $\mc{P}$
  with $\alpha\subseteq\partial \beta$. Then
\begin{enumerate}[(i)]
  \item[{\normalfont(\mylabel{l:broken_bd_1-cone_supp-}{i})}] $\supp_-(\alpha) = \supp_-(\beta)$,
  \item[{\normalfont(\mylabel{l:broken_bd_1-cone_supp+}{ii})}] $\indg{G}{\supp_+(\alpha)}$ is equal to a single connected component of
    $\indg{G}{\supp_+(\beta)}$, and
  \item[{\normalfont(\mylabel{l:broken_bd_1-cone_1-cone}{iii})}] $\alpha$ is a 1-cone.
\end{enumerate}
\end{lemma}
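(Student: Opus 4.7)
The key structural facts I would exploit come from properties~\ref{def:ex_rep_1} and~\ref{def:ex_rep_4} of Definition~\ref{def:ex_represent} applied to a representative $x\in\beta$: since $\beta$ is broken, $\indg{G}{\supp_+(\beta)}$ has exactly two connected components $C_1,C_2$, the graph $\indg{G}{\supp_-(\beta)}$ is a single connected component which I call $C_-$, and no edge of $G$ joins $\supp_+(\beta)$ to $\supp_-(\beta)$. My plan for parts~(i) and~(ii) is to analyse, for any $y\in\alpha$, the perturbation $z_\epsilon:=y-\epsilon x$ for small $\epsilon>0$. A direct sign-check gives
\begin{align*}
\supp_+(z_\epsilon)&=\supp_+(y)\cup\br*{\supp_-(\beta)\setminus\supp_-(y)},\\
\supp_-(z_\epsilon)&=\supp_-(y)\cup\br*{\supp_+(\beta)\setminus\supp_+(y)},
\end{align*}
and because the two parts on each right-hand side live in $\supp_+(\beta)$ and $\supp_-(\beta)$ respectively, they are not joined by any $G$-edge, so $\indg{G}{\supp_+(z_\epsilon)}$ and $\indg{G}{\supp_-(z_\epsilon)}$ decompose as disjoint graph-unions of the pieces shown.

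I would then split on the dichotomy given by property~\ref{def:ex_rep_1} applied to $y$. In Case~A, where $\indg{G}{\supp_+(y)}$ is connected, it must lie in one of $C_1,C_2$, say $C_1$; applying property~\ref{def:ex_rep_1} to $z_\epsilon$ and then to $-z_\epsilon$ (using that $\supp_-(y)$ is nonempty by~\ref{def:ex_rep_0} and that $C_2\subseteq\supp_+(\beta)\setminus\supp_+(y)$) eliminates every sub-configuration except $\supp_-(y)=C_-$ and $\supp_+(y)=C_1$, giving parts~(i) and~(ii). In Case~B, where $\indg{G}{\supp_+(y)}$ has two components, property~\ref{def:ex_rep_1} applied to $z_\epsilon$ first forces $\supp_-(y)=C_-$ and then the required connectivity of $\indg{G}{\supp_-(z_\epsilon)}$ forces $\supp_+(y)=C_1\cup C_2$; but then $\supp_\pm(y)=\supp_\pm(\beta)$, so Observation~\ref{obs:cones} yields $\alpha=\beta$, contradicting that $\alpha$ is a proper face.

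Part~(iii) then follows by a short argument using (i) and (ii) and transitivity of the face relation. If $\alpha$ were not a 1-cone, it would admit a proper face $\gamma\neq\set{0}$, which is automatically a proper face of $\beta$ as well. Parts (i) and (ii) applied to $\gamma$ give $\supp_-(\gamma)=C_-$ and $\supp_+(\gamma)\in\set{C_1,C_2}$; on the other hand, Observation~\ref{obs:cones} applied to $\gamma\subsetneq\alpha$ forces $\supp_+(\gamma)\subsetneq\supp_+(\alpha)=C_1$ (WLOG), which is incompatible with either $C_1$ or $C_2$ because $C_1$ and $C_2$ are disjoint. The main obstacle I anticipate is the bookkeeping in the Case~A/Case~B analysis of (i)--(ii): one has to combine the constraints coming from property~\ref{def:ex_rep_1} applied to both $z_\epsilon$ and $-z_\epsilon$, and keep track of the decomposition of each support into pieces inside $C_1$, $C_2$, and $C_-$, to cleanly rule out every intermediate configuration.
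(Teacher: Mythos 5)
Your proof is correct and follows essentially the same route as the paper's: your perturbation $z_\epsilon=y-\epsilon x$ is, up to sign and rescaling, exactly the paper's test vectors $b-Ka$ and $Ka-b$, and the contradictions are extracted from Definition~\ref{def:ex_represent}\eqref{def:ex_rep_1} and \eqref{def:ex_rep_4} in the same way, merely organized as a case split on the connectivity of $\indg{G}{\supp_+(y)}$ instead of as three successive claims. Your derivation of (iii) by descending to a nonzero proper face of $\alpha$ is a minor variant of the paper's appeal to Corollary~\ref{cor:1-cones}, and both work.
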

\begin{proof}
  For $W\subseteq V=V(G)$ by a (connected) \emph{component} of $W$ we always
  mean the vertex set of a connected component of $\indg{G}{W}$.
  Observation~\ref{obs:cones} says that
  $\supp_+(\alpha)\subseteq\supp_+(\beta)$,
  $\supp_-(\alpha)\subseteq\supp_-(\beta)$, and at least on of the inclusions
  is strict. Throughout the proof, $a$ is a vector in $\alpha$ and $b$ in
  $\beta$. 

  First, we deduce that $\supp_+(\alpha)$ contains at least one of
  the two components of $\supp_+(\beta)$. For contradiction, we assume that
  $\supp_+(\beta) \setminus \supp_+(\alpha)$ is not contained in a single component of $\supp_+(\beta)$. 
  Consider the vector $b - Ka$ for $K > 0$
  sufficiently large. Then 
\[
\supp_+(b-Ka)=\br*{\supp_+(\beta)\setminus\supp_+(a)}\cup\supp_-(a).
\]
Definition~\ref{def:ex_represent}\eqref{def:ex_rep_4} applied to $b$ implies
  that $\supp_-(a)$ is in a different component of $\supp_+(b-Ka)$ than
  $\supp_+(\beta)\setminus\supp_+(a)$. Together with the assumption above this means that
  $\supp_+(b-Ka)$ has at least three components, which contradicts
  Definition~\ref{def:ex_represent}\eqref{def:ex_rep_1}. Let us denote by $C_1$
  a component of $\supp_+(\beta)$ contained in $\supp_+(\alpha)$ and let $C_2$
  be the other component.

  By similar ideas we deduce \eqref{l:broken_bd_1-cone_supp-}. For contradiction, suppose that
  $\supp_-(\alpha)\subsetneq\supp_-(\beta)$.  This time, we consider the vector
  $Ka - b$ for $K > 0$ sufficiently large. The component $\supp_-(\beta)$ thus
  contributes both to $\supp_+(Ka-b)$ and $\supp_-(Ka-b)$. The component $C_1$ of
  $\supp_+(\beta)$ is inside
  $\supp_+(Ka-b)$. No matter whether the component $C_2$ of
  $\supp_+(\beta)$ contributes to $\supp_+(Ka-b)$ or $\supp_-(Ka-b)$ or both, in each
  case, we deduce a contradiction with
  Definition~\ref{def:ex_represent}\eqref{def:ex_rep_1}---either we have three
  components in the positive support or at least two components in the positive support
  and two components in the negative support. In particular,
  $\supp_-(\alpha)=\supp_-(\beta)$  implies that
  $\supp_+(\alpha)\subsetneq\supp_+(\beta)$.

  Again by similar ideas we deduce \eqref{l:broken_bd_1-cone_supp+}. Now we know that $C_2$ is not contained
  in $\supp_+(\alpha)$ because $\supp_+(\alpha)\subsetneq\supp_+(\beta)$. Then
  $C_2 \cap \supp_+(\alpha)=\emptyset$, otherwise for $K > 0$
    sufficiently large $\supp_+(b-Ka)$ and $\supp_-(b-Ka)$ have two components
    each.

  Finally, all this implies that $\alpha$ is a $1$-cone via
  Corollary~\ref{cor:1-cones} as \eqref{l:broken_bd_1-cone_supp-} and \eqref{l:broken_bd_1-cone_supp+} (applied to $\alpha'$) show
  that there is no $\alpha'$ with its support strictly included in $\alpha$.
\end{proof}

The following observation is a generalization of part (8) from the proof of \citet[Thm.~3]{CdV_linkless}, but the present proof is different to that of \citep{CdV_linkless}, since we work with a more general object than a kernel of a matrix in $\mc{M}(G)$.
\begin{obs}[generalized {\citep{CdV_linkless}}]\label{obs:broken_2dim}
Let $\beta$ be a broken cone of $\mc{P}(L)$. Then
\begin{enumerate}
\item[{\normalfont(\mylabel{obs:broken_2dim_1}{i})}] $\dim(\beta)= 2$ and
\item[{\normalfont(\mylabel{obs:broken_2dim_2}{ii})}] $\partial \beta$ consists of two $1$-cones, which correspond to vectors $x\in L$ for which $\supp_-(x)=\supp_-(\beta)$ and $\supp_+(x)$ is identical with one of the connected components induced by $\supp_+(\beta)$. 
\end{enumerate}
\end{obs}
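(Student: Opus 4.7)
The plan is to combine Lemma~\ref{l:broken_bd_1-cone} with the observation that $\cl\beta$ is a pointed convex polyhedral cone in a certain auxiliary subspace. First I would fix $b\in\beta$, denote by $C_1,C_2$ the two connected components of $\indg{G}{\supp_+(\beta)}$ guaranteed by Definition~\ref{def:ex_represent}\eqref{def:ex_rep_1}, set $C_-:=\supp_-(\beta)$, and introduce the linear subspace $D:=\{y\in L\colon \supp(y)\subseteq\supp(b)\}$. Since $\beta$ is cut out inside $D$ by the strict sign conditions $y_v>0$ for $v\in C_1\cup C_2$ and $y_v<0$ for $v\in C_-$, it is nonempty and open in $D$, so $\dim\beta=\dim D$. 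Moreover $\cl\beta\subseteq D$ is pointed, because any line $\R y\subseteq\cl\beta$ would have to satisfy $y_v\geq 0$ and $-y_v\geq 0$ on $C_1\cup C_2$ and symmetrically on $C_-$, forcing $y=0$.

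For the lower bound $\dim\beta\geq 2$, I would use Definition~\ref{def:ex_represent}\eqref{def:ex_rep_2}: since $\indg{G}{\supp_+(b)}$ is disconnected, $b$ does not have inclusion-minimal support in $L$, so there is a nonzero $z\in L$ with $\supp(z)\subsetneq\supp(b)$; then $b$ and $z$ are linearly independent vectors of $D$. For the upper bound $\dim\beta\leq 2$, I would invoke Lemma~\ref{l:broken_bd_1-cone}: every cone of $\mathcal{P}$ contained in $\partial\beta$ is a $1$-cone. The nonempty relatively open convex-geometric faces of $\cl\beta$ correspond exactly to the cones of $\mathcal{P}$ contained in $\cl\beta$ (both are indexed by the sign patterns realised in $L$ that are sub-patterns of $\beta$'s), so every proper face of the pointed polyhedral cone $\cl\beta$ is either the apex or $1$-dimensional. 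A pointed polyhedral cone of dimension $d$ always possesses a facet of dimension $d-1$, hence $d-1\leq 1$. Combining both bounds gives $\dim\beta=2$, proving~\eqref{obs:broken_2dim_1}.

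For~\eqref{obs:broken_2dim_2}, I would use that a pointed $2$-dimensional polyhedral cone has exactly two extreme rays, and $\partial\beta$ is the union of these two rays with the apex. Each extreme ray is a $1$-cone of $\mathcal{P}$, and by Lemma~\ref{l:broken_bd_1-cone}\eqref{l:broken_bd_1-cone_supp-}--\eqref{l:broken_bd_1-cone_supp+} its sign pattern must be $(\supp_+,\supp_-)=(C_i,C_-)$ for some $i\in\{1,2\}$. Since a cone of $\mathcal{P}$ is an entire equivalence class of $\sim$ restricted to $L$, it is determined by its sign pattern. Hence two distinct extreme rays must realise two distinct admissible sign patterns, i.e., one ray is the $1$-cone with $\supp_+=C_1$, $\supp_-=C_-$ and the other with $\supp_+=C_2$, $\supp_-=C_-$, exactly as asserted.

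The main obstacle is bookkeeping between the two viewpoints: the fan $\mathcal{P}$ on one side, and the convex-geometric face lattice of $\cl\beta$ on the other. Concretely, one has to confirm that nonempty relatively open convex-geometric faces of $\cl\beta$ really are cones of $\mathcal{P}$ contained in $\cl\beta$ (immediate from the sign-pattern description of both), and that a pointed polyhedral cone of dimension $d\geq 2$ really has a facet of dimension $d-1$ (a standard fact about convex cones). Once these two facts are in place, the rest is a one-line dimension count combined with the uniqueness of a $1$-cone given a sign pattern.
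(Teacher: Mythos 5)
Your proposal is correct and follows essentially the same route as the paper: the lower bound $\dim\beta\geq 2$ comes from the non-minimality of $\supp(b)$ (the paper phrases this via Corollary~\ref{cor:1-cones} and Definition~\ref{def:ex_represent}\eqref{def:ex_rep_2}), and the upper bound comes from Lemma~\ref{l:broken_bd_1-cone} together with the pointedness of $\cl\beta$. Your facet-dimension count for a pointed cone is just a reformulation of the paper's comparison of the dimensions of $\beta\cap S$ and $\partial\beta\cap S$ for the unit sphere $S$ in $L$, and the identification of the two extreme rays with the two sign patterns $(C_i,\supp_-(\beta))$ matches the paper's conclusion of part~\eqref{obs:broken_2dim_2}.
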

\begin{proof}
First we observe that if $\beta$ was only a 1-cone, then any $x\in \beta$ would have inclusion-minimal support in $L$ due to Corollary~\ref{cor:1-cones}, which would be a contradiction to Definition~\ref{def:ex_represent}\eqref{def:ex_rep_2}.

Now let $\alpha$ be a cone from $\partial \beta$. Then
Lemma~\ref{l:broken_bd_1-cone} implies that $\alpha$ is a $1$-cone,
$\supp_-(\alpha)=\supp_-(\beta)$ and $\supp_+(\alpha)$ is equal to exactly
one of two connected components induced by $\supp_+(\beta)$. Therefore, we see
that there are at most two different choices for $\alpha$, which are
  necessarily $1$-cones. Given that $\partial \beta
  \neq \emptyset$, because the closure of $\beta$ does not contain (nonzero) opposite points of $L$, and since $\dim(\beta) \geq 2$ we deduce that $\dim(\beta) = 2$ by comparing the dimensions of the intersections of $\beta$ and $\partial\beta$ with the unit sphere in $L$ centered at the origin. 
Consequently, $\partial \beta$ consists of two $1$-cones.  
\end{proof}

\paragraph*{Notation.}
For $x\in L$ we write $S(x):=N(\supp(x))$ and
$R(x):=V\setminus\br*{\supp(x)\cup S(x)}$. Let $\beta\in\mc{P}$. We write
$S(\beta):=S(x)$ and $R(\beta):=R(x)$ for any $x\in \beta$. 
The notation is
motivated by the fact that $S(x)$ is a `separator' if $x$ is a broken vector
and $R(x)$ is the set of vertices of $G$ `remote' from
$\supp(x)$; see Figure~\ref{f:separation}.

\begin{figure}
\begin{center}
  \includegraphics[page=1]{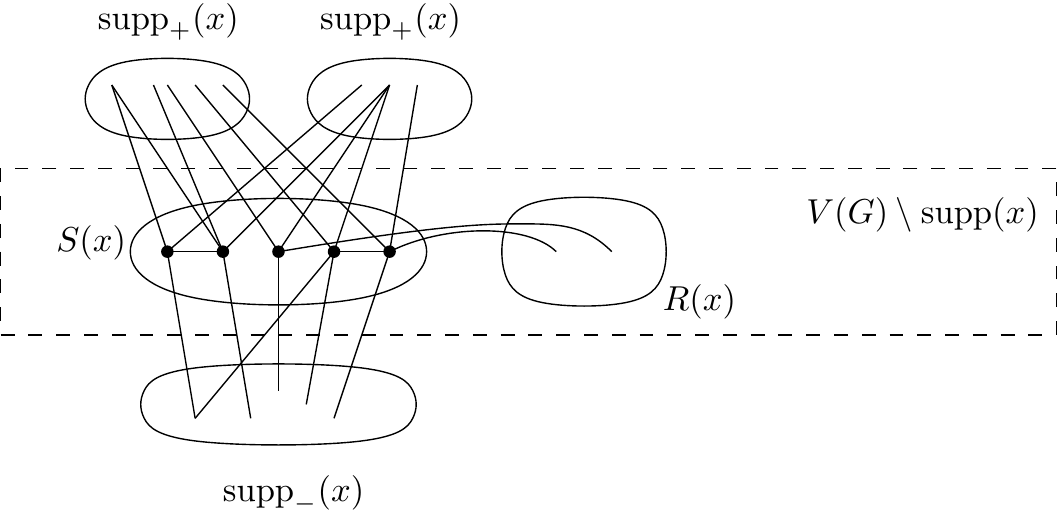}
  \caption{A typical picture of a separation of $G$ by $S(x)$ when $x$ is a broken
  vector in a semivalid representation. Compare with Lemma~\ref{lemma:basic}\eqref{lemma:basic_2}
and Definition~\ref{def:ex_represent}.}
 \label{f:separation}
\end{center}
\end{figure}

\begin{obs}\label{obs:S_determines_R}
Let $\beta\in\mc{P}$ be broken. Then for every $x\in L$ such that $x_{S(\beta)}=0$ we have $x_{R(\beta)}=0$.
\end{obs}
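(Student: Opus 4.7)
The plan is a perturbation argument by contradiction, in the spirit of Lemma~\ref{l:broken_bd_1-cone} and Observation~\ref{obs:broken_2dim}. Suppose $x\in L$ satisfies $x_{S(\beta)}=0$ but $x_{R(\beta)}\neq 0$. Pick any $b\in\beta$ and consider $y = b + tx \in L$ for a sufficiently small $t>0$, so that the signs of $b$ are preserved at every vertex with $b_v\neq 0$. The aim is to exhibit too many connected components in $\indg{G}{\supp_+(y)}$ or $\indg{G}{\supp_-(y)}$, contradicting Definition~\ref{def:ex_represent}\eqref{def:ex_rep_1}.

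Before perturbing I record the combinatorial structure of $b$. Since $\beta$ is broken, Observation~\ref{obs:broken_2dim} tells me that $\indg{G}{\supp_+(b)}$ splits into two connected components $C_1,C_2$; Definition~\ref{def:ex_represent}\eqref{def:ex_rep_4} forbids edges between $\supp_+(b)$ and $\supp_-(b)$; and the very definition $R(\beta)=V\setminus(\supp(\beta)\cup N(\supp(\beta)))$ forbids edges between $R(\beta)$ and $\supp(\beta)$. Thus the four ``blocks'' $C_1$, $C_2$, $\supp_-(b)$, and $R(\beta)$ are pairwise non-adjacent in $G$. Because $x$ vanishes on $S(\beta)$ and $b$ vanishes on $R(\beta)$, I then obtain
\[
\supp_+(y) = C_1\cup C_2\cup\supp_+(x_{R(\beta)}),\qquad \supp_-(y) = \supp_-(b)\cup\supp_-(x_{R(\beta)}).
\]

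A short case split finishes the argument. If $\supp_+(x_{R(\beta)})\neq\emptyset$, then $\indg{G}{\supp_+(y)}$ has at least three components (the pairwise non-adjacent pieces $C_1$, $C_2$, and any connected component of $\supp_+(x_{R(\beta)})$), contradicting Definition~\ref{def:ex_represent}\eqref{def:ex_rep_1}. Otherwise $\supp_-(x_{R(\beta)})\neq\emptyset$, and then $\indg{G}{\supp_+(y)}$ still has the two components $C_1,C_2$ while $\indg{G}{\supp_-(y)}$ is disconnected (the non-adjacent pieces $\supp_-(b)$ and $\supp_-(x_{R(\beta)})$ lie in different components), which is again forbidden by \eqref{def:ex_rep_1}. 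The only subtle point is this case split: one must notice that the dichotomy in \eqref{def:ex_rep_1} allows at most two positive components and only when the negative support is connected, so any extra component contributed by $R(\beta)$ on either side forces a contradiction. Once the four non-adjacent blocks are identified, the rest is a routine perturbation.
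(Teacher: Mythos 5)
Your proof is correct and follows essentially the same route as the paper: perturb a vector $b\in\beta$ by $tx$ for small $t>0$ and derive a contradiction with Definition~\ref{def:ex_represent}\eqref{def:ex_rep_1} from the extra, non-adjacent components contributed by $\supp(x)\cap R(\beta)$. The paper simply counts that $\supp(b+\varepsilon x)$ induces at least four components while $\supp_+(b+\varepsilon x)$ stays disconnected, which subsumes your two cases in one step; otherwise the arguments are identical.
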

\begin{proof}
Let $y\in \beta$ and assume, for contradiction, that there is $x\in L$ such that $x_{S(\beta)}=0$ and $x_{R(\beta)}\neq 0$. Since there is no edge between $R(\beta)$ and $\supp(y)$ in $G$, the set $\supp_+(y+\varepsilon x)$ is still disconnected and $\supp(y+\varepsilon x)$ induces at least four connected components, for all $\varepsilon>0$ small enough. This is incompatible with Definition~\ref{def:ex_represent}\eqref{def:ex_rep_1}. 
\end{proof}

A crucial observation for our subsequent considerations is the following:
\begin{obs}\label{obs:broken_not_adjacent}
Let $\alpha$ be a 1-cone of $\mc{P}$. Then there is at most one broken cone $\beta\in\mc{P}$ such that $\alpha\subseteq\cl{\beta}$.
\end{obs}
\begin{proof}
Let $\beta,\gamma$ be two broken cones such that $\alpha\subseteq \cl{\beta}\cap\cl{\gamma}$.
By Observation~\ref{obs:broken_2dim}\eqref{obs:broken_2dim_2}
we get that $\supp_-(\beta)=\supp_-(\alpha)=\supp_-(\gamma)$. Definition~\ref{def:ex_represent}\eqref{def:ex_rep_4} then implies that $S(\gamma)=S(\beta)$.
Applying Observation~\ref{obs:S_determines_R} finishes the argument.
\end{proof}


\subsection{Polytopal representation}
In analogy with the approach of \citet{CdV_linkless}, we utilize semivalid
representations $L$ of a given connected graph $G$ to build convex polytopes of
dimension $\dim(L)$. By a \emph{$k$-face} (or a \emph{$k$-cell}) we mean a face (or
a cell) of dimension $k$. We refer to a $d$-dimensional polytope as to a
\emph{$d$-polytope}.

\begin{defn}[Polytopal representation]\label{def:pol_represent}
  Let $L$ be a semivalid representation of $G$, and $\P = \P(L)$ be the complete fan
corresponding to $L$. We say that a polytope $\PP \subset L$ containing the
origin in its interior (relative in $L$) is \emph{polytopal representation} of
$G$ if it satisfies the following conditions.
 

\begin{enumerate}
	\item[{\normalfont(\mylabel{d:pr_symmetric}{i})}] The vertex set of $\PP$ is centrally symmetric.
	\item[{\normalfont(\mylabel{d:pr_subdivides}{ii})}] $\PP$ subdivides $\P$.
	  This in particular means, that for every face $\FF$ of $\PP$,
	  there is a unique cone of $\P$ which contains $\FF$. We
	  denote this cone by $\gamma(\FF)$.
	\item[{\normalfont(\mylabel{d:pr_simplicial}{iii})}] $\PP$ is simplicial, that is, all faces of $\PP$ are simplices.
	\item[{\normalfont(\mylabel{d:pr_inclusions}{iv})}] 
	  Let $\EE$, $\FF$ be faces of $\partial \PP$ which are faces of a
	  common face of $\partial \PP$.
	  Then either $\gamma(\EE)$ is a face of $\gamma(\FF)$ or
	  $\gamma(\FF)$ is a face of $\gamma(\EE)$. (This includes the
	  option $\gamma(\EE) = \gamma(\FF)$.)


	\item[{\normalfont(\mylabel{d:pr_broken}{v})}] Let us define a \emph{broken
	  edge} as an edge of $\PP$ lying in a broken cone of $\P$. Then we
	  require: 
    For every $\mb{a}\in \PP^{(0)}$ all broken edges of $\PP$ in
    $\st(\mb{a}; \PP)$ belong to the same broken cone.

\end{enumerate}
\end{defn}


We, of course, need to know that a polytopal representation exists.
\citet{CdV_linkless} build a polytope $\PP$ satisfying \eqref{d:pr_symmetric}--\eqref{d:pr_simplicial} and a weaker version of \eqref{d:pr_inclusions} as a convex
hull of a sufficiently dense set of unit vectors taken from every cone, without
going into details about how to choose this set. As we add extra properties, we want to be more careful
and check that all of them can be satisfied.

\begin{prop}
\label{p:polytopal_exists}
Given a semivalid representation $L$, a corresponding polytopal representation
$\PP$ always exists.
\end{prop}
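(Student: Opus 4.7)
The plan is to build $\PP$ in three stages: an initial centrally symmetric polytope $\PP_0 \subset L$ whose face fan already equals $\P(L)$; a simplicial refinement $\PP_1$; and a barycentric-type subdivision $\PP$ of $\PP_1$ that makes property \eqref{d:pr_inclusions} easy. For $\PP_0$ I would take $\PP_0 := C \cap L$, where $C := \conv\set{\pm \ee_v : v \in V}$ is the unit cross-polytope in $\R^V$. Central symmetry of $\PP_0$ is inherited from $C$, and the face fan of $C$ is precisely the orthant fan $\E$ from Definition~\ref{d:PL}; a direct computation then shows that the face fan of $\PP_0$ in $L$ consists of the nonzero cones $\alpha \cap L$ for $\alpha \in \E$, which is exactly $\P(L)$. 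In particular, each face of $\PP_0$ lies in a single cone of $\P$, so the map $\gamma$ is well-defined and properties \eqref{d:pr_symmetric} and \eqref{d:pr_subdivides} already hold.

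To obtain the simplicial refinement, I would process the non-simplicial proper faces of $\PP_0$ in order of non-increasing dimension, treating antipodal pairs $\{\FF, -\FF\}$ jointly: for each such $\FF$ pick a new vertex $\aa_\FF$ on the ray from the origin through the barycenter of $\FF$, placed just outside $\PP_0$ so that it is beyond exactly the facets of $\PP_0$ containing $\cl \FF$, and set $\aa_{-\FF} := -\aa_\FF$; then perform the two geometric stellar subdivisions. Since the chosen ray lies in the relatively open cone $\gamma(\FF)$, every newly created cell of the subdivision is contained in $\gamma(\FF)$, so the subdivision property with respect to $\P$ is preserved. The output $\PP_1$ is centrally symmetric, simplicial, and still subdivides $\P$. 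I would then apply the analogous stellar-subdivision procedure to the simplicial $\PP_1$, again in decreasing dimension and with antipodal vertex placements on rays from the origin through the barycenters of the faces being subdivided. The material at the end of Subsection~\ref{s:prelim} guarantees that the resulting $\PP$ has $\partial \PP$ simplicial-isomorphic, via the radial projection onto $\partial \PP_1$, to $\sd \partial \PP_1$, and the vertex-placement choice again makes $\F(\PP)$ subdivide $\P$.

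Properties \eqref{d:pr_symmetric}--\eqref{d:pr_simplicial} thus hold by construction. For \eqref{d:pr_inclusions}, suppose $\EE$ and $\FF$ are faces of $\partial \PP$ lying in a common face; under the identification $\partial \PP \cong \sd \partial \PP_1$ they are faces of a single simplex of $\sd \partial \PP_1$, so Observation~\ref{o:edge_in_edge} produces comparable faces $\theta_{\EE}, \theta_{\FF}$ of $\partial \PP_1$ uniquely containing the radial projections of $\EE$ and $\FF$, say $\theta_{\EE} \preceq \theta_{\FF}$. Because radial projection preserves cones of $\P$, one has $\gamma(\EE) = \gamma(\theta_{\EE})$ and $\gamma(\FF) = \gamma(\theta_{\FF})$; and $\theta_{\EE} \subseteq \cl{\theta_{\FF}} \subseteq \cl{\gamma(\theta_{\FF})}$ forces $\gamma(\theta_{\EE})$ to be a face of $\gamma(\theta_{\FF})$. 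For \eqref{d:pr_broken}, let $\aa$ be a vertex of $\PP$ incident to a broken edge $\ee \subseteq \beta$. By Observation~\ref{obs:broken_2dim}, $\dim \beta = 2$, and since $\aa \in \cl \beta$ and $\aa \neq 0$, either $\gamma(\aa) = \beta$ (in which case every broken edge at $\aa$ lies in $\gamma(\aa) = \beta$), or $\gamma(\aa)$ is a $1$-cone contained in $\partial \beta$, and then Observation~\ref{obs:broken_not_adjacent} makes $\beta$ the unique broken cone whose closure contains $\gamma(\aa)$, so again all broken edges at $\aa$ lie in $\beta$.

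The main technical obstacle is the bookkeeping for the two rounds of stellar subdivisions: one must check simultaneously that each new vertex can be placed so as to preserve central symmetry, to keep every new cell inside a single cone of $\P$ (so that $\gamma$ is well-defined throughout), and, in the second round, to make $\partial \PP$ combinatorially the barycentric subdivision of $\partial \PP_1$. None of this is deep, but it has to be set up carefully so that Observations~\ref{o:edge_in_edge} and~\ref{obs:broken_not_adjacent} yield \eqref{d:pr_inclusions} and \eqref{d:pr_broken} as cleanly as above.
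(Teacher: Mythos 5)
Your construction and your verification of properties \eqref{d:pr_symmetric}--\eqref{d:pr_inclusions} match the paper's proof (the paper also starts from $\CC\cap L$, makes it simplicial by stellar subdivisions, and derives \eqref{d:pr_inclusions} from Observation~\ref{o:edge_in_edge} via radial projection). However, there is a genuine gap in your treatment of \eqref{d:pr_broken}, and it is tied to the fact that you subdivide barycentrically only \emph{once}: the paper takes the \emph{second} barycentric subdivision of the simplicial polytope $\QQ'=\PP_1$, and this is not an accident.

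Property \eqref{d:pr_broken} quantifies over all broken edges in the closed star $\st(\aa;\PP)$, i.e.\ over all edges contained in the closure of \emph{some} simplex whose closure contains $\aa$ --- not only over edges incident to $\aa$. (This stronger form is exactly what is used later: in the proof of Proposition~\ref{p:lemma37}, the broken edges antipodal to a vertex $\uu$ are the broken edges in the closed star of $-\uu$, and these generally do not contain $-\uu$.) Your argument starts from ``$\aa\in\cl\beta$'', which is only available when $\ee$ contains $\aa$. For a broken edge $\ee\subseteq\beta$ merely lying in a common simplex with $\aa$, property \eqref{d:pr_inclusions} only gives that $\gamma(\aa)$ and $\beta$ are comparable, and the problematic case is $\beta\subsetneq\cl{\gamma(\aa)}$ with $\dim\gamma(\aa)\ge 3$: nothing you have proved prevents a single cone of dimension $\ge 3$ from having two distinct broken $2$-cones among its faces, nor a single simplex of $\PP_1$ from containing broken edges from two different broken cones in its boundary. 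After only one barycentric subdivision, the closed star of the barycenter of such a simplex would contain both, and \eqref{d:pr_broken} would fail. The paper closes this gap by taking the second barycentric subdivision and invoking Lemma~\ref{l:derived_neighborhoods}: the closures of two distinct broken cones are disjoint away from the origin by Observation~\ref{obs:broken_not_adjacent}, and a closed star in $\sd^2$ of a complex cannot meet two disjoint subcomplexes --- a statement that is false for a single barycentric subdivision (the star of the barycenter of a simplex in $\sd\K$ meets all faces of that simplex). To repair your proof, add one more round of stellar subdivisions so that $\partial\PP\cong\sd^2\partial\PP_1$, keep your argument for \eqref{d:pr_inclusions} (running Observation~\ref{o:edge_in_edge} relative to the first subdivision), and prove \eqref{d:pr_broken} via Lemma~\ref{l:derived_neighborhoods} as the paper does.
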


\begin{proof}
We start with considering the crosspolytope $\CC \subseteq \R^V$ whose vertices
  are the standard basis vectors $e_v \in \R^V$ and their negatives $-e_v$ for $v \in
  V(G)$. Then the fan of the crosspolytope $\F(\CC)$ is exactly the fan $\E$
  defined in Definition~\ref{d:PL}. Next we consider the auxiliary polytope $\QQ :=
  \CC \cap L$ and we get $\P = \F(\QQ)$. In particular, $\QQ$ subdivides $\P$. 


Subsequently, we apply a series of geometric stellar subdivisions on $\QQ$ as described
  in Subsection~\ref{s:prelim}. First we get a simplicial polytope $\QQ'$ which
  subdivides $\P$. Then we take $\PP$ as the second barycentric subdivision of
  $\QQ'$, again by a series of stellar subdivisions. We perform all stellar
  subdivisions in a centrally symmetric fashion so that we obtain centrally
  symmetric $\PP$. 

  It remains to verify the properties from Definition~\ref{def:pol_represent}.
  The properties \eqref{d:pr_symmetric}, \eqref{d:pr_subdivides}, and \eqref{d:pr_simplicial} follow immediately from the construction.

  We will show that \eqref{d:pr_inclusions} follows from
  Observation~\ref{o:edge_in_edge}. Let $\QQ''$ be the polytope obtained from $\QQ'$
  after the first barycentric subdivision and let $p''\colon \partial
  \PP \to \partial \QQ''$ be the projection towards the origin, as
  in Subsection~\ref{s:prelim}. 
  Then $p''(\partial \PP)$ is a barycentric subdivision of
  $\partial \QQ''$. 
  Now, let $\EE''$ be the face of $\QQ''$ containing
  $p''(\EE)$ and let $\FF''$ be the face of $\QQ''$ containing $p''(\FF)$; see
  Figure~\ref{f:inclusion_cones}. Note that
  $\EE'' \subseteq \gamma(\EE)$. Indeed, $\QQ''$ subdivides
    $\P$, therefore $\EE''$ is contained in some cone of $\P$, and
    $\gamma(\EE)$ is the only option. Similarly, $\FF'' \subseteq \gamma(\FF)$.
  By
  Observation~\ref{o:edge_in_edge}, $\EE''$ is a face of $\FF''$ or vice versa
  (the observation is applied with $\eta_1 = \EE''$, $\eta_2 = \FF''$, $\Delta_1
  = \EE$, and $\Delta_2 = \FF$). Therefore
  $\gamma(\EE)$ is a face of $\gamma(\FF)$ or vice versa.

  \begin{figure}
\begin{center}
  \includegraphics{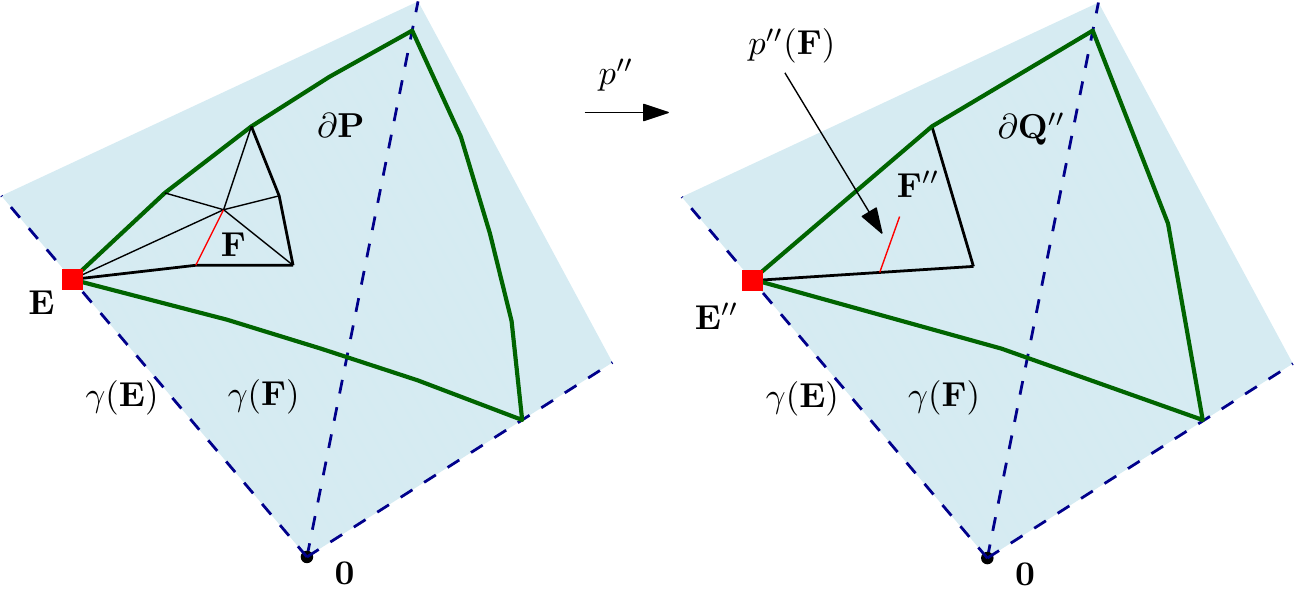}
  \caption{A picture ilustrating property \eqref{d:pr_inclusions}. In this picture, $L$ is
  $3$-dimensional. 
  Left: 
  The faces $\EE$ and $\FF$ are a vertex and an edge in a common
  (small) triangle of $\partial \PP$. The larger (black) subdivided triangle containing
  both $\EE$ and $\FF$ is a result of applying a barycentric subdivision to
  (some triangle of) $\QQ''$. 
  The green outer `almost' triangle depicts the
  intersection of $\partial \PP$ and the cone $\gamma(\FF)$. Right: The picture
  shows $\EE''$ and $\FF''$ obtained as faces of $\QQ''$ containing $p''(\EE)$
  and $p''(\FF)$. In the specific case on the picture $p''(\EE)$ concides with
  $\EE$ and $\EE''$, thus only $p''(\FF)$ is depicted.}
\label{f:inclusion_cones}
\end{center}
  \end{figure}

  Finally, we derive \eqref{d:pr_broken} from Lemma~\ref{l:derived_neighborhoods}. This time,
  we consider the projection $p'\colon \PP \to \QQ'$. Then $p'(\partial \PP)$
  is the second barycentric subdivision of $\partial \QQ'$. For contradiction,
  assume that the edges of $\st(\aa; \PP)$ belong to two broken cones $\beta_1$
  and $\beta_2$. Equivalently, the edges of $\st(p'(\aa); p'(\partial \PP)) =
  \st(p'(\aa); \sd^2(\partial \QQ'))$ belong to $\beta_1$ and $\beta_2$. Let
  $\L_1$ and $\L_2$ be subcomplexes of $\partial \QQ'$
  triangulating $\cl{\beta_1}$ and $\cl{\beta_2}$,
  respectively.
  Observation~\ref{obs:broken_not_adjacent} implies that $|\L_1| \cap |\L_2| =
  \emptyset$. Then, by Lemma~\ref{l:derived_neighborhoods}, $\st(p'(\aa);
  \sd^2(\partial \QQ'))$ cannot intersect both $|\L_1|$ and $|\L_2|$, a
  contradiction.
\end{proof}

\section{On the relation \texorpdfstring{$\mu(G)\leq \sigma(G)$}{\unichar{"1D707}(G)<=\unichar{"1D70E}(G)}}
\label{s:mu_sigma}


The aim of this section is to prove Theorem~\ref{t:main}. In fact, we prove that $\eta(G)\leq\sigma(G)$ for every graph $G$. This immediately implies Theorem~\ref{t:main} thanks to Observation~\ref{obs:mu_eta}.

To make our exposition more readable, in the present section we refer to
vertices and edges of a graph as to \emph{nodes} and \emph{arcs},
respectively, and reserve the terms vertices and edges for the 0- and 1-faces of polytopes.


\begin{prop}\label{p:sigma_ex_rep_new}
  Let $G$ be a connected graph and $L$ be a semivalid representation of
  $G$. Then $\dim(L)\leq\sigma(G)$.
\end{prop}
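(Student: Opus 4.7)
The plan is to work with the polytopal representation $\PP\subset L$ supplied by Proposition~\ref{p:polytopal_exists}. By construction $\PP$ is a centrally symmetric simplicial $d$-polytope (where $d=\dim L$) subdividing the fan $\P(L)$, so $\partial\PP$ is an antipodally symmetric triangulated $(d-1)$-sphere. Showing $\dim(L)\leq\sigma(G)$ amounts to exhibiting some CW-complex $\C$ with $1$-skeleton $G$ that does not admit an even mapping into $\R^{d-1}$; this $\C$ will be produced at the very end, and the obstruction to the even map will come from an antipodal map $f$ from $\PP^{(1)}$ into $G$.

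The heart of the argument is the construction of this $f$. To each vertex $\uu$ of $\PP$ we assign a node $f(\uu)\in\supp_+(\uu)$, and to each edge $\ee=\uu\ww$ of $\PP$ we assign a walk in $G$ from $f(\uu)$ to $f(\ww)$ supported inside $\supp_+(\gamma(\ee))\cup S(\gamma(\ee))$. When $\ee$ is not broken, the subgraph $\indg{G}{\supp_+(\gamma(\ee))}$ is connected and already contains both endpoints (by Observation~\ref{obs:cones} applied to the inclusions $\gamma(\uu),\gamma(\ww)\subseteq\partial\gamma(\ee)$), so a walk can be chosen directly inside it. The delicate case is broken edges, where the walk must detour through the separator $S(\gamma(\ee))$; here property \eqref{d:pr_broken} of Definition~\ref{def:pol_represent} ensures that at each vertex $\uu$ all broken edges of $\st(\uu;\PP)$ belong to a single broken cone $\beta$, so the routings coordinate coherently at $\uu$, while property \eqref{d:pr_inclusions} makes them consistent across the boundaries of higher-dimensional faces. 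This packaged statement is the Proposition~\ref{p:lemma37} foreshadowed in the overview. The crucial byproduct is antipodal disjointness: for a pair of antipodal faces $\FF$ and $-\FF$ of $\partial\PP$, the images $f(\FF)$ and $f(-\FF)$ lie in disjoint subsets of $V(G)$. This uses Definition~\ref{def:ex_represent}\eqref{def:ex_rep_0} (so that $\supp_+(\FF)$ and $\supp_+(-\FF)=\supp_-(\FF)$ are disjoint) together with the observation that whenever a cone $\beta$ is broken, its antipode $-\beta$ is not: the positive support of $-\beta$ equals $\supp_-(\beta)$, which is connected by Definition~\ref{def:ex_represent}\eqref{def:ex_rep_1}. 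Hence on the antipodal side no routing through a common separator is needed.

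Given such a $\Z_2$-equivariant map $f$ from the $(d-1)$-sphere $\PP^{(1)}$ into $G$ with antipodally disjoint images, we conclude by invoking the lemma of \citet{sigma} tailored to exactly this setting, namely the one underlying their proof of $\mu(G)\leq\sigma(G)+2$. That lemma extracts from $f$ (together with $\partial\PP$ as the antipodally symmetric combinatorial sphere) a CW-complex $\C$ with $1$-skeleton $G$ and a Borsuk--Ulam-type obstruction preventing any even mapping $\C\to\R^{d-1}$, from which $\sigma(G)\geq d=\dim(L)$ follows. I expect the hard part to be the routing of broken edges: the antipodality required at the end forces the routing decisions at the two endpoints of every broken edge to be globally compatible across all of $\partial\PP$, and the technical conditions \eqref{d:pr_inclusions} and \eqref{d:pr_broken} in the definition of a polytopal representation were crafted precisely to make this global coordination possible.
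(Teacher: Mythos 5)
Your proposal follows the paper's route exactly: construct the polytopal representation of Proposition~\ref{p:polytopal_exists}, build the antipodally disjoint cellular map $f\colon \PP^{(1)}\to G$ of Proposition~\ref{p:lemma37}, and conclude via Lemma~37 of van der Holst and Pendavingh (after a projective perturbation removing parallel antipodal faces, a detail you omit but which is handled by reference to Lov\'asz--Schrijver). The one under-argued point in your sketch of the disjointness is the broken case: the separator node $v(\beta)$ lies in $N(\supp(\beta))$, hence outside $\supp(\beta)$, and may well lie in $\supp_+$ of an antipodal face, so it is not enough that $-\beta$ is unbroken --- the antipodal routing must actively avoid $v(\beta)$ while staying connected, which is precisely what the paper's careful choices of $f(\uu)$ and $W(\ee)$ (via Claim~\ref{c:avoid_v} and Claim~\ref{c:cone}) accomplish.
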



The key step for the proof of Proposition~\ref{p:sigma_ex_rep_new} is to deduce Proposition~\ref{p:lemma37} below.
Given a polytope $\QQ$, two faces $\FF$ and $\FF'$ are
\emph{antipodal} if there exist two distinct parallel hyperplanes (relatively in the
affine hull of $\QQ$) $h$ and $h'$ such that $\FF \subset h$, $\FF' \subset h'$
and $\QQ$ is `between' $h$ and $h'$, that is, it belongs to one of the closed
halfspaces bounded by $h$ as well as one of the closed halfspaces bounded by
$h'$.
If $\QQ$ is centrally symmetric, then $\FF$ and $\FF'$ are antipodal if
and only if $\FF$ and $-\FF'$ belong to the closure of some proper face of $\QQ$.

Given two polyhedral complexes $\C$ and $\D$, a map $f\colon |\C| \to |\D|$ is
\emph{cellular} if $f(\C^{(k)}) \subseteq \D^{(k)}$ for every $k \geq 0$. If $\C$ and
$\D$ are graphs, which is the only case we are interested in, then this
condition means that every vertex of $\C$ is mapped to a vertex of $\D$.

\begin{prop}
\label{p:lemma37}
Let $G$ be a connected graph and $\PP$ a polytopal representation of
$G$ 
(arising from the fan $\P = \P(L)$, where $L$ is a
semivalid representation of $G$).
Then, there is a cellular map $f\colon \PP^{(1)} \to G$ such that for every pair
  of antipodal faces $\FF$ and $\FF'$, the smallest subgraphs of $G$ containing
  $f(\FF^{(1)})$ and $f(\FF'^{(1)})$, respectively, have no common nodes.
\end{prop}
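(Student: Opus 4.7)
The plan is to construct $f$ in three stages---first on the vertices of $\PP$, then on the non-broken edges, then on the broken edges---while keeping the following invariant: for every face $\EE$ of $\PP$ and every face $\FF$ of $\PP$ with $\FF\subseteq\cl{\EE}$, the image $f(\FF^{(1)})$ lies inside $G[\supp_+(\EE)]$ up to a small, controlled set of separator nodes. Two antipodal faces $\FF, \FF'$ share a common face $\EE$ with $\FF,-\FF'\subseteq\cl{\EE}$. Since any $x\in\cl{\gamma(\EE)}$ satisfies $\supp_+(x)\subseteq\supp_+(\EE)$ and $\supp_-(x)\subseteq\supp_-(\EE)$, one gets $\supp_+(\uu')\subseteq\supp_-(\EE)$ for every vertex $\uu'\in\FF'$, and the symmetric invariant places $f(\FF'^{(1)})$ inside $G[\supp_-(\EE)]$ up to separator nodes. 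The disjointness $\supp_+(\EE)\cap\supp_-(\EE)=\emptyset$ then does the main work, provided the separator nodes on the two sides do not clash.

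For the first two stages the choices are transparent. At each vertex $\uu$ I pick $f(\uu)\in\supp_+(\uu)$ (nonempty by Definition~\ref{def:ex_represent}\eqref{def:ex_rep_0}), using property~\eqref{d:pr_broken} of Definition~\ref{def:pol_represent} which forces the broken edges in $\st(\uu;\PP)$ to lie in a single broken cone $\beta_\uu$ (if any such exists). If $\gamma(\uu)\subsetneq\beta_\uu$, Lemma~\ref{l:broken_bd_1-cone}\eqref{l:broken_bd_1-cone_supp+} says $G[\supp_+(\uu)]$ already coincides with one specific component of $G[\supp_+(\beta_\uu)]$; if $\gamma(\uu)=\beta_\uu$, I have the freedom to commit $f(\uu)$ to either of the two components, a freedom I defer for use in Stage~3. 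For each non-broken edge $\ee=\uu\ww$, the subgraph $G[\supp_+(\ee)]$ is connected and contains $\{f(\uu),f(\ww)\}$, so I route $f(\ee)$ as a walk inside $G[\supp_+(\ee)]$, which automatically lies in $\supp_+(\EE)$ for every face $\EE$ containing $\ee$, producing no antipodal conflict.

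The hard step is routing broken edges. For a broken edge $\ee=\uu\ww$ with $\gamma(\ee)=\beta$, Observation~\ref{obs:broken_2dim} splits $\supp_+(\beta)$ into two components $C_1(\beta)\sqcup C_2(\beta)$, and $f(\uu),f(\ww)$ both fall in this union. When they lie in the same component I route through it; otherwise Definition~\ref{def:ex_represent}\eqref{def:ex_rep_4} supplies a separator node $s\in S(\beta)$ adjacent to both components, through which I detour. The genuine obstacle is that $S(\beta)=S(-\beta)$ as subsets of $V$ and that nodes of $S(\beta)$ may well lie in $\supp_-(\EE)$ for faces $\EE$ with $\beta\subseteq\cl{\gamma(\EE)}$, so a naive separator choice puts a node of $f(\FF^{(1)})$ inside $G[\supp_-(\EE)]$ and causes a clash with some edge of $\FF'$. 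The plan to defeat this obstacle rests on three ingredients used in concert: (a)~the deferred freedom of Stage~1, committing interior vertices of each broken cone $\beta$ to a single component $C_i(\beta)$ so that as many broken edges as possible need no detour at all; (b)~Observation~\ref{obs:broken_not_adjacent}, which prevents distinct broken cones from sharing any $1$-cone, so the vertex commitments above stay local to each broken cone and do not interfere with one another; and (c)~the central symmetry~\eqref{d:pr_symmetric} of $\PP$ together with $S(\beta)=S(-\beta)$, which lets me reserve disjoint subsets of $S(\beta)$ for routings in $\beta$ and in $-\beta$, ensuring that whenever $\FF$ detours through some $s\in S(\beta)$ the antipodal face $\FF'$ stays away from $s$. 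Verifying face-by-face that these coordinated choices really preserve the invariant, and in particular that the reserved separator nodes never spill into $\supp_-(\EE)$ on the $\FF$-side or into $\supp_+(\EE)$ on the $\FF'$-side in a conflicting way, is the principal technical task of the proof.
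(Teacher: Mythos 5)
Your high-level architecture matches the paper's: place $f(\uu)$ in $\supp_+(\uu)$, route non-broken edges inside the connected $G[\supp_+(\ee)]$, and reduce antipodal disjointness to $\supp_+(\DD)\cap\supp_-(\DD)=\emptyset$ for a common face $\DD$, with the separator nodes of broken cones as the only obstruction. But the proposal stops exactly where the real proof begins, and the plan you sketch for the remaining step is misdirected. Your Stage~1 constrains $f(\uu)$ only with respect to broken edges \emph{incident} to $\uu$ (you invoke Definition~\ref{def:pol_represent}\eqref{d:pr_broken} for $\st(\uu;\PP)$); the paper instead applies it to $\st(-\uu;\PP)$, i.e.\ to broken edges \emph{antipodal} to $\uu$, and forces $f(\uu)\in\supp_+(\uu)\setminus R(\beta)$ with $f(\uu)\neq v(\beta)$ for the corresponding broken cone $\beta$ (Claim~\ref{c:avoid_v} shows such a node exists). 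Without this antipodal constraint you cannot later show that a non-broken edge $\ee'$ antipodal to a broken edge of $\beta$ can be routed inside $G[\supp_+(\ee')\setminus\{v(\beta)\}]$: the connectivity argument needs $f(\uu'),f(\ww')\in\supp_-(\beta)\cup S(\beta)$, which follows from avoiding $R(\beta)$ together with $\supp_+(\beta)\subseteq\supp_-(\ee')$, and then uses that $\supp_-(\beta)$ is connected, contained in $\supp_+(\ee')$, and adjacent to every node of $S(\beta)$. This avoid-the-fixed-separator-yet-stay-connected argument is the heart of the proof and is entirely absent from your proposal; you explicitly defer it as ``the principal technical task.''

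The two devices you do offer would not supply it. Committing interior vertices of a broken $2$-cone $\beta$ to one component of $\supp_+(\beta)$ cannot eliminate detours in general: a broken edge joining vertices on the two boundary $1$-cones of $\beta$ has its endpoints in the two different components by Lemma~\ref{l:broken_bd_1-cone}\eqref{l:broken_bd_1-cone_supp+}, so some broken edge must pass through $S(\beta)$ no matter what (and this is harmless --- $G[\supp_+(\bb)\cup\{v(\beta)\}]$ is always connected, so the paper simply routes every broken edge through the one fixed node $v(\beta)$). And ``reserving disjoint subsets of $S(\beta)$ for routings in $\beta$ and in $-\beta$'' misidentifies the conflict: no routing in $-\beta$ touches $S(\beta)$ at all, since edges there live in the connected $\supp_+(-\beta)=\supp_-(\beta)$. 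The actual clash is with edges in cones $\gamma$ of dimension at least $3$ with $-\beta\subsetneq\cl{\gamma}$ (Claim~\ref{c:cone}), whose natural routing sets $\supp_+(\gamma)$ may contain all of $S(\beta)$; no partition of $S(\beta)$ helps there --- one must delete $v(\beta)$ from those routing sets and re-establish connectivity, which is precisely the argument you have not given.
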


Using the tools of van der Holst and Pendavingh~\cite{sigma},
Proposition~\ref{p:lemma37} implies Proposition~\ref{p:sigma_ex_rep_new} quite
straightforwardly. As this proof is short, we present it before a proof of
Proposition~\ref{p:lemma37}. Here, we essentially only repeat the proof
of~\citep[Thm.~40]{sigma}.

\begin{proof}[Proof of Proposition~\ref{p:sigma_ex_rep_new}.]
  The main tool for this proof is Lemma 37 from \citep{sigma}.
This lemma says that, under the additional assumption that
$\PP$ does not contain parallel faces (that is, faces with disjoint
  affine hulls such that $\FF - \FF$ and $\FF' - \FF'$ contain a common nonzero
  vector),
the existence of $f$ from
  Proposition~\ref{p:lemma37} implies $\sigma(G) \geq \dim \PP$. (Note that
  $\dim \PP = \dim L$.)
  Our $\PP$ contains
parallel faces. However, as van der Holst and Pendavingh point out, $\PP$ can be perturbed by a projective transformation
to a polytope without antipodal parallel faces preserving the
  combinatorial structure of the polytope. Similarly as van der Holst and
  Pendavingh do, we refer to the proof of \citep[Thm.~1]{CdV_linkless} for
  details.
\end{proof}

\paragraph*{Notation.} 
Given $G$, $L$, $\P$ and $\PP$ as in the statement of  
Proposition~\ref{p:lemma37}, we extend the notation $R(\gamma)$ and
$S(\gamma)$ from cones to faces of $\PP$. Let $\FF$ be a face of $\PP$, which lies in a unique cone $\gamma(\FF)\in\mc{P}$ by
Definition~\ref{def:pol_represent}. We define $S(\FF):=S(\gamma(\FF))$ and
$R(\FF):=R(\gamma(\FF))$. Note also that $\supp(\FF) = \supp(\gamma(\FF))$ and
$\supp_\pm(\FF) = \supp_\pm(\gamma(\FF))$ according to our convention above
Observation~\ref{obs:cones}.

\begin{proof}[Proof of Proposition~\ref{p:lemma37}.]
During the construction, for each face $\FF$ of $\PP$ we will
introduce a set $W(\FF)$, which will be a subset of nodes of $G$ such that 
$f(\FF^{(1)}) \subseteq G[W(\FF)]$. The key
property of the construction will be that $W(\FF)$ and $W(\FF')$ are disjoint
if $\FF$ and $\FF'$ are antipodal faces of $\PP$.
We first define $f$ and $W$ on the vertices of $\PP$ and then on
the edges of $\PP$. Finally, we extend the definition of $W$ to higher-dimensional
faces and verify the required disjointness condition.

Throughout the proof, we repeatedly use the fact that every broken cone is
  $2$-dimensional according to
Observation~\ref{obs:broken_2dim}\eqref{obs:broken_2dim_1}. In particular,
faces of $\PP$ lying in a broken cone are either broken edges, or `inner'
vertices in a broken $2$-cone.


Before we start the construction, for every broken cone $\beta$
we fix a node $v(\beta) \in S(\beta)$. We also use the notation $v(\bb) :=
v(\beta)$, where $\bb$ is an arbitrary broken edge lying in $\beta$, that is,
$\gamma(\bb) = \beta$.



\paragraph{Dimension $0$.} Given $\uu \in \PP^{(0)}$, 
Definition~\ref{def:pol_represent}\eqref{d:pr_broken} applied to $\aa =
-\uu$ implies that either there is no broken
edge antipodal to $\uu$, or there is a unique $2$-cone $\beta = \beta(\uu) \in \P$ such
that all broken edges antipodal to $\uu$ lie in $\beta$.
In the former case, we let $f(\uu)$ be an arbitrary
node of $\supp_+(\uu)$. 
In the latter case, we want to avoid $R(\beta)$ and $v(\beta)$; thus, we need to check that
we can do so.


\begin{claim}
\label{c:avoid_v}
  If $\beta = \beta(\uu)$ exists, then there is a node in $\supp_+(\uu)
  \setminus R(\beta)$ different from $v(\beta)$.
\end{claim}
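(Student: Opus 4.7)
The plan is to pick any broken edge $\bb\in\beta$ antipodal to $\uu$ (whose existence is guaranteed by the definition of $\beta=\beta(\uu)$), and to study the common face $\GG$ of $\PP$ whose closure contains both $\uu$ and $-\bb$. Since $\PP$ is simplicial, $\GG$ is a simplex whose vertex set contains $\uu$ together with the two endpoints $-\uu'_1,-\uu'_2$ of $-\bb$. I would split the argument according to whether $\uu\in\{-\uu'_1,-\uu'_2\}$ or not.

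In the degenerate case $\uu=-\uu'_j$, the vertex $\uu'_j$ lies in $\cl{\beta}$, so $\gamma(\uu'_j)\in\{\alpha_1,\alpha_2,\beta\}$, where $\alpha_1,\alpha_2$ are the two $1$-cones of $\partial\beta$ supplied by Observation~\ref{obs:broken_2dim}. Either directly (if $\gamma(\uu'_j)=\beta$) or via Lemma~\ref{l:broken_bd_1-cone}\eqref{l:broken_bd_1-cone_supp-}, we get $\supp_-(\uu'_j)=\supp_-(\beta)$, and therefore $\supp_+(\uu)=\supp_-(\beta)$. As $\supp_-(\beta)$ is a nonempty subset of $\supp(\beta)$ disjoint from $R(\beta)$ and from $S(\beta)\supseteq\{v(\beta)\}$, the claim is immediate.

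In the generic case, $\uu,-\uu'_1,-\uu'_2$ are three distinct vertices of the simplex $\GG$, so the triangle $T$ they span is a $2$-face of $\PP$. Picking $\zz$ in the relative interior of $T$ and applying Observation~\ref{obs:cones} to the cones $\gamma(\uu),\gamma(-\uu'_1),\gamma(-\uu'_2)$ sitting in $\partial\gamma(T)$ yields
\[
\supp_+(\zz)\supseteq\supp_+(\uu)\cup\supp_-(\beta),\qquad \supp_-(\zz)\supseteq\supp_+(\beta)\cup\supp_-(\uu).
\]
Denoting the two connected components of $\supp_+(\beta)$ by $C_1,C_2$, the set $\supp_-(\zz)\supseteq C_1\cup C_2$ has at least two components; Definition~\ref{def:ex_represent}\eqref{def:ex_rep_1} applied to $-\zz$ then forces $\supp_-(\zz)$ to have exactly two components and $\supp_+(\zz)$ to be connected, and Definition~\ref{def:ex_represent}\eqref{def:ex_rep_4} applied to $-\zz$ yields no edges of $G$ between $\supp_+(\zz)$ and $\supp_-(\zz)$. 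The key structural input, obtained by applying Definition~\ref{def:ex_represent}\eqref{def:ex_rep_4} to any element of $\beta$, is that $N(C_1)=N(\supp(\beta))=S(\beta)$, so every node of $S(\beta)$ is adjacent to $C_1$.

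Assuming for contradiction that $\supp_+(\uu)\setminus R(\beta)\subseteq\{v(\beta)\}$, i.e.\ $\supp_+(\uu)\subseteq R(\beta)\cup\{v(\beta)\}$, I would argue in two subcases. If $v(\beta)\in\supp_+(\uu)$, then $v(\beta)\in\supp_+(\zz)\cap S(\beta)$ has an edge to $C_1\subseteq\supp_-(\zz)$, contradicting the no-edges conclusion above. Otherwise $\supp_+(\uu)\subseteq R(\beta)$, and since $\supp_+(\zz)$ is connected and contains the disjoint nonempty sets $\supp_+(\uu)$ and $\supp_-(\beta)$, there is a path in $G[\supp_+(\zz)]$ from $\supp_+(\uu)$ to $\supp_-(\beta)$; because any path in $G$ from $R(\beta)$ to $\supp(\beta)$ must cross $S(\beta)$, this path uses some $w\in S(\beta)\cap\supp_+(\zz)$, and again $w$ is adjacent to $C_1\subseteq\supp_-(\zz)$, contradicting the no-edges conclusion. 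The main care will be the setup of the generic case---ensuring that the triangle $T$ is really a $2$-face of $\PP$ (which uses simpliciality) and that the support inclusions above hold in every subcase of where $\uu'_1,\uu'_2$ lie in $\cl{\beta}$; both follow because $\supp_-(\uu'_i)=\supp_-(\beta)$ and $\supp_+(\uu'_i)\subseteq\supp_+(\beta)$ regardless of the exact cones $\gamma(\uu'_i)$.
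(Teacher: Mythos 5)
Your degenerate case is fine and coincides with the first case of the paper's proof. The generic case, however, has a genuine gap at its pivot: you assert that $\indg{G}{\supp_-(\mb z)}$ has at least two components because $\supp_-(\mb z)\supseteq C_1\cup C_2$. Containing a disconnected set does not make an induced subgraph disconnected, and here the assertion is in fact always false. The cone $\gamma(T)$ contains the relatively open $2$-dimensional set $T$, hence is at least $3$-dimensional; by Observation~\ref{obs:broken_2dim}\eqref{obs:broken_2dim_1} every broken cone is $2$-dimensional, so $-\gamma(T)=\gamma(-\mb z)$ is not broken and $\indg{G}{\supp_+(-\mb z)}=\indg{G}{\supp_-(\mb z)}$ is \emph{connected} (concretely, $\supp_-(\mb z)$ must contain vertices of $S(\beta)$ joining $C_1$ to $C_2$). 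You therefore cannot invoke Definition~\ref{def:ex_represent}\eqref{def:ex_rep_1} and \eqref{def:ex_rep_4} for $-\mb z$, the ``no edges between $\supp_+(\mb z)$ and $\supp_-(\mb z)$'' conclusion is unavailable, and both subcases of your contradiction argument collapse, since each one ends by exhibiting such an edge.

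The ingredient you set up but never use is Definition~\ref{def:pol_represent}\eqref{d:pr_inclusions}: since $\uu$ and $-\bb$ lie in a common proper face of $\PP$, the cones $\gamma(\uu)$ and $\gamma(-\bb)=-\beta$ are comparable. If $-\beta\subseteq\cl{\gamma(\uu)}$, then $\supp_-(\beta)=\supp_+(-\beta)\subseteq\supp_+(\uu)$ by Observation~\ref{obs:cones}; if instead $\gamma(\uu)\subseteq\cl{-\beta}$, then exactly as in your degenerate case $\supp_+(\uu)=\supp_-(\beta)$ (via Lemma~\ref{l:broken_bd_1-cone}\eqref{l:broken_bd_1-cone_supp-} when $\gamma(-\uu)\subseteq\partial\beta$). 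In either case $\supp_+(\uu)$ meets the nonempty set $\supp_-(\beta)\subseteq\supp(\beta)$, and $\supp(\beta)$ is disjoint from $R(\beta)\cup\set{v(\beta)}$, which is the claim. This is the paper's argument; it never needs the interior point $\mb z$ of the triangle $T$, only the comparability of the two cones.
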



\begin{proof}
  We distinguish two cases according to whether $\gamma(-\uu)\subseteq \cl{\beta}$ or not. 

If $\gamma(-\uu)\subseteq \cl{\beta}$, we get
\[
  \supp_+(\uu) = \supp_-(-\uu) \subseteq \supp_-(\beta)
\]
  whereas $v(\beta)$ does not belong to $\supp(\beta)$. Therefore the claim
  follows from the facts that $\supp_+(\uu)$ is nonempty by Definition~\ref{def:ex_represent}\eqref{def:ex_rep_0} and $R(\beta) \cap
  \supp(\beta) = \emptyset$.

Now we assume that $\gamma(-\uu)\not\subseteq \cl{\beta}$. Let $\bb$ be an
  arbitrary broken edge antipodal to $\uu$. We know that $\beta = \gamma(\bb)$. We also know that there is
a proper face $\FF$ of $\PP$ such that $\bb$ and $-\uu$ belong to $\FF$.
 Definition~\ref{def:pol_represent}\eqref{d:pr_inclusions} implies that $\beta$ is a
    face of $\gamma(-\uu)$ or vice versa. Since $\gamma(-\uu)\not\subseteq
  \cl{\beta}$, we obtain that $\gamma(-\uu)$ is at least 3-dimensional cone
  satisfying $\beta \subseteq \cl{\gamma(-\uu)}$.

Now we get $\supp_+(\uu) = \supp_-(-\uu) \supseteq \supp_-(\beta)$. We also again
  use that $v(\beta)$ does not belong to $\supp(\beta)$. Therefore, the
  claim follows from the fact that $\supp_-(\beta)$ is nonempty and $R(\beta)
  \cap \supp(\beta) = \emptyset$. 
\end{proof}

Therefore, if $\beta = \beta(\uu)$ exists, by Claim~\ref{c:avoid_v}, we may
set $f(\uu)$ to be an arbitrary node of $\supp_+(\uu) \setminus R(\beta)$
different from $v(\beta)$.

We also set, somewhat trivially, $W(\uu):=\set{f(\uu)}$.


\paragraph{Dimension $1$.} 
Let $\ee = \uu\ww$ be an edge of $\PP$. We want to define $f$ on $\ee$ as well as
$W(\ee)$.
We proceed so that
for every edge $\ee =
\uu\ww$ of $\PP$ we first suitably define $W(\ee)$ in such a way
that $f(\uu)$ and $f(\ww)$ are nodes in the same connected component of
$G[W(\ee)]$. 
Then we set $f(\ee)$ to be an arbitrary
path connecting $f(\uu)$ and $f(\ww)$ inside $G[W(\ee)]$.

If $\ee = \bb$ is a broken edge, then we set $W(\bb) := \supp_+(\bb) \cup
\{v(\bb)\}$. Then $f(\uu)$ and $f(\ww)$ are nodes in $W(\bb)$ as
$\supp_+(\uu), \supp_+(\ww) \subseteq \supp_+(\bb)$. Also, $\indg{G}{W(\bb)}$ is
connected as $v(\bb)$ is adjacent to every component of $\indg{G}{\supp(\bb)}$ by Definition~\ref{def:ex_represent}\eqref{def:ex_rep_4}. 

Now, let us assume that $\ee$ is not broken. 
For the connectedness of $\indg{G}{W(\ee)}$ it would suffice to set $W(\ee)
= \supp_+(\ee)$. We know that $G[\supp_+(\ee)]$ is 
connected as $\ee$ is not broken, and also, $f(\uu)$ and $f(\ww)$ are nodes of
$G[W(\ee)]$ by the same argument as above. 
However, in some cases we 
want $W(\ee)$ to be smaller; namely, if there is a broken edge $\bb$
antipodal to $\ee$, we want to avoid $v(\bb)$.
Note that the cone $\beta := \gamma(\bb)$ is
independent of the choice of $\bb$, if $\bb$ exists,
by Definition~\ref{def:pol_represent}\eqref{d:pr_broken} applied to
an arbitrary vertex of $-\ee$ in place of $\aa$. Then $v(\bb) = v(\beta)$, $R(\bb) =
R(\beta)$ and $S(\bb) = S(\beta)$ are independent of $\bb$ as
well. 
So,
we set $W(\ee) :=  \supp_+(\ee)$ if there is no broken edge antipodal to
$\ee$, but we set $W(\ee) :=  \supp_+(\ee) \setminus \set{v(\bb)}$ if there is a
broken edge $\bb$ antipodal to $\ee$. 

We want to check that $f(\uu)$ and $f(\ww)$ belong to the same
connected component of $\indg{G}{W(\ee)}$. This we already did in the former case, thus
it remains to consider the latter case, when $\bb$ exists. 
We observe that since $\ee$ is antipodal to $\bb$, the
vertices $\uu$ and $\ww$ are antipodal to $\bb$ as well. Therefore, both
$f(\uu)$ and $f(\ww)$ are distinct from $v(\bb) = v(\beta)$. In other words,
$f(\uu)$ and $f(\ww)$ indeed lie in $W(\ee)$. It remains to show that they
belong to the same connected component of $G[W(\ee)]$.

\begin{claim}
\label{c:cone}
  Either $\bb=-\ee$, or $\gamma(\ee)$ is at least $3$-dimensional, 
  and $-\beta \subsetneq \cl{\gamma(\ee)}$. 
\end{claim}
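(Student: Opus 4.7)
My plan is to exploit Definition~\ref{def:pol_represent}\eqref{d:pr_inclusions} twice, first at the level of the edges $\ee, -\bb$ and then at the level of their vertices. Assuming $\bb\neq -\ee$, the antipodality of $\ee$ and $\bb$ combined with the central symmetry of $\PP$ yields a proper face $\FF$ of $\PP$ containing both $\ee$ and $-\bb$ in its closure. Applying \eqref{d:pr_inclusions} to $\ee$ and $-\bb$, one of $\gamma(\ee)$ and $\gamma(-\bb)=-\beta$ is a (possibly improper) face of the other. To obtain the claim it then suffices to rule out (i) that $\gamma(\ee)$ is a proper face of $-\beta$, and (ii) that $\gamma(\ee)=-\beta$.

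For (i), I would note that $\gamma(\ee)$ has positive dimension (it contains the nonzero open edge $\ee$) and that $-\beta$ is $2$-dimensional by Observation~\ref{obs:broken_2dim}\eqref{obs:broken_2dim_1}; hence the only proper face of $-\beta$ that $\gamma(\ee)$ could equal is one of the two $1$-cones bounding $-\beta$. But then both endpoints $\uu,\ww$ of $\ee$ would lie on a single ray from the origin, and, since the origin lies in the interior of $\PP$, the vertex closer to the origin would belong to the relative interior of the segment from $\mb{0}$ to the other vertex, contradicting its being a vertex of $\PP$.

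For (ii), I would proceed by a vertex-counting argument. Applying \eqref{d:pr_inclusions} to each vertex $\aa$ of $\ee$ paired with $-\bb$ (both are faces of $\FF$) forces the $1$-cone $\gamma(\aa)$ to be a face of $-\beta$, so $\aa$ lies on one of the two $1$-cones in $\partial(-\beta)$; the same reasoning places each vertex of $-\bb$ on these same two $1$-cones. The hypothesis $\gamma(\ee)=-\beta$ then prevents the two vertices of $\ee$ from sharing a single $1$-cone (otherwise $\ee$ collapses into a ray as in (i)), and likewise for $-\bb$. By the ray argument already used in (i), each of these $1$-cones contains at most one vertex of $\PP$. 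Consequently the vertex sets of $\ee$ and $-\bb$ must coincide, forcing $\ee=-\bb$ and contradicting our assumption.

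With (i) and (ii) excluded, \eqref{d:pr_inclusions} leaves only the possibility that $-\beta$ is a proper face of $\gamma(\ee)$, which is exactly the desired conclusion $-\beta\subsetneq\cl{\gamma(\ee)}$ together with $\dim\gamma(\ee)\geq 3$. The main obstacle is the borderline case (ii): handling it requires invoking \eqref{d:pr_inclusions} a second time, now at the vertex level, so that the uniqueness-of-a-vertex-on-each-ray phenomenon established in (i) can be brought to bear on the endpoint sets of $\ee$ and $-\bb$.
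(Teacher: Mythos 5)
Your reduction to property \eqref{d:pr_inclusions} and your treatment of case (i) are fine (the paper disposes of (i) even more quickly by noting that $\gamma(\ee)$ is at least $2$-dimensional, but your ray argument is a valid substitute). The problem is in case (ii). You assert that applying \eqref{d:pr_inclusions} to a vertex $\aa$ of $\ee$ and to $-\bb$ "forces the $1$-cone $\gamma(\aa)$ to be a face of $-\beta$", silently assuming that $\gamma(\aa)$ is $1$-dimensional. Nothing in Definition~\ref{def:pol_represent} guarantees this: a vertex of $\PP$ may lie in the relative interior of a cone of any dimension, and the construction in Proposition~\ref{p:polytopal_exists} (repeated barycentric/stellar subdivision) produces many such vertices --- for instance the barycenter of an edge lying in a $2$-cone is a vertex of $\PP$ lying in the open $2$-cone. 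Under the hypothesis $\gamma(\ee)=-\beta$ the correct conclusion is only that $\gamma(\aa)$ is $-\beta$ itself or one of its two bounding $1$-cones. Once vertices of $\ee$ and of $-\bb$ are allowed inside the open cone $-\beta$, the counting collapses: the arc $\partial\PP\cap\cl{(-\beta)}$ can be a polygonal path with several interior vertices, and $\ee$ and $-\bb$ can be two disjoint edges of that path, so the vertex sets need not coincide and you cannot conclude $\ee=-\bb$.

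The paper closes case (ii) by a different, hyperplane argument: the defining hyperplane of the common face $\DD$ contains both $\ee$ and $-\bb$; if $\gamma(\ee)=-\beta=\gamma(-\bb)$, these are two distinct (hence non-collinear) segments in the $2$-dimensional linear span of $-\beta$, so the hyperplane contains that whole $2$-plane and in particular the origin --- impossible for a supporting hyperplane of a proper face of a polytope with the origin in its interior. To repair your proof you would need to replace the vertex count by an argument of this kind (or otherwise use that $\ee$ and $-\bb$ sit in a common proper face whose affine hull must avoid the origin); as written, case (ii) does not go through.
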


\begin{proof}
Assume that $\bb \neq -\ee$. Because $\bb$ and $\ee$ are antipodal,
  there is a face $\DD$ of $\partial \PP$ containing $-\bb$ and $\ee$. Therefore $\gamma(-\bb) =
-\beta$
is a face of $\gamma(\ee)$ or vice versa according to
Definition~\ref{def:pol_represent}\eqref{d:pr_inclusions}. Since $-\beta$ is a
$2$-cone and $\gamma(\ee)$ is at least $2$-dimensional, $-\beta$ must be a face
  of $\gamma(\ee)$. It remains to observe that $-\beta \neq \gamma(\ee)$. For contradiction assume
  $-\beta = \gamma(\ee)$. Consider the defining hyperplane for $\DD$; it
  contains $-\bb$ and $\ee$.
  Therefore it contains
  $-\beta$ because $-\beta$ is in the
    affine hull of $\bb\cup-\ee$ if $\bb \neq -\ee$ and 
      $-\beta = \gamma(-\bb) =  \gamma(\ee)$.
  Consequently, it contains the origin,
  which is a contradiction.
\end{proof}

We remark that if the former case $\bb=-\ee$ occurs, then $v(\bb) \not \in
\supp_+(\ee)$ as $v(\bb) \not \in \supp(\bb) = \supp(\ee)$; we already
resolved this situation. Thus it remains to consider the case that $\gamma(\ee)$ is
at least $3$-dimensional and $-\beta \subsetneq \cl{\gamma(\ee)}$. In
addition, we can assume that $v(\bb) \in \supp_+(\ee)$ (again, the opposite case was already resolved).



Now note that $f(\uu) \in \supp_+(\uu)
\setminus R(\beta)$ and $f(\ww) \in \supp_+(\ww) \setminus R(\beta)$ due to the
definition of $f(\uu)$ and $f(\ww)$. 
This gives $f(\uu), f(\ww) \in \supp_+(\ee) \setminus R(\beta)$.

From $-\beta \subseteq \cl{\gamma(\ee)}$ we also get
\[\supp_+(\beta) = \supp_-(-\beta) \subseteq \supp_-(\gamma(\ee)) = \supp_-(\ee).\]
Therefore $f(\uu), f(\ww) \not \in \supp_+(\beta)$, because they
belong to $\supp_+(\ee)$. Altogether, both $f(\uu), f(\ww) \in \supp_{-}(\beta) \cup
S(\beta)$ as they also do not belong to $R(\beta)$.
Moreover, each of $f(\uu)$ and
$f(\ww)$ either belongs to $\supp_{-}(\beta)$ or has a neighbor in
$\supp_{-}(\beta)$, since each vertex of $S(\beta)$ is connected to every component of
$G[\supp(\beta)]$. We also know that $G[\supp_{-}(\beta)]$ is connected by Definition~\ref{def:ex_represent}\eqref{def:ex_rep_1} as
$\beta$ is broken, that $\supp_{-}(\beta) = \supp_+(-\beta) \subseteq \supp_+(\gamma(\ee)) =
\supp_+(\ee)$, and that $v(\beta) \notin \supp_{-}(\beta)$. Altogether, $f(\uu)$
and $f(\ww)$ can indeed be connected inside $G[\supp_+(\ee) \setminus
\{v(\bb)\}]$. (See Figure~\ref{f:connecting_uw} as an example.)


\begin{figure}
\begin{center}
  \includegraphics[page=2]{separation_broken_vector}
  \caption{Connecting $f(\uu)$ and $f(\ww)$ inside $G[\supp_+(\ee) \setminus
  \{v(\bb)\}]$ in the case that neither $\uu$ nor $\ww$ belongs to
  $\supp_{-}(\beta)$.}
 \label{f:connecting_uw}
\end{center}
\end{figure}

\paragraph{Higher dimensions.}
It remains to define $W(\FF)$ for faces $\FF$ of $\PP$ of higher dimensions. 
We inductively set $W(\FF) := \bigcup_{\HH} W(\HH)$,
where the union is
over all proper subfaces $\HH$ of $\FF$. As the definition is given inductively,
this is equivalent with setting $W(\FF)$ to $\bigcup_\ee W(\ee)$ where the
union is over the edges $\ee$ in $\FF$.
Then we easily get $f(\FF^{(1)}) \subseteq G[W(\FF)]$ for any face $\FF$ of
$\PP$, as required.

  It remains to prove that $W(\FF)$ and $W(\FF')$
    are disjoint for any pair $\FF$ and $\FF'$ of antipodal faces of $\PP$.

  For contradiction, let us assume that $W(\FF) \cap W(\FF') \neq \emptyset$.
  Due to the definition of $W(\FF)$ and $W(\FF')$, there are faces $\ee$ in
  $\FF$ and $\ee'$ in $\FF'$ of dimension at most 1 such that $W(\ee) \cap
  W(\ee') \neq \emptyset$. (We use the edge notation $\ee$ and
  $\ee'$, which corresponds to the `typical case'; however, one of $\ee, \ee'$ may be a vertex, if $\FF$ or
  $\FF'$ is $0$-dimensional.) We remark that $\ee$ and $\ee'$ are
  antipodal as $\FF$ and $\FF'$ are antipodal. Therefore, there is a proper face $\DD$
  containing $\ee$ and $-\ee'$. 

  If neither $\ee$ nor $\ee'$ is a broken edge, then $W(\ee) \subseteq \supp_+(\ee)
  \subseteq \supp_+(\DD)$, and $W(\ee') \subseteq \supp_+(\ee')
    \subseteq \supp_-(\DD)$, which is a contradiction.


  Therefore, we can assume that $\ee$ or $\ee'$ is a broken edge; say
  $\ee'$ is broken. Then $\ee$ cannot be broken. (Indeed, if
  $\ee$ were broken, it would have to be an edge. Therefore, by
  Definition~\ref{def:pol_represent}\eqref{d:pr_inclusions} and Observation~\ref{obs:broken_2dim}\eqref{obs:broken_2dim_1}, $\gamma(\ee) =
  \gamma(-\ee')$, but $\gamma(\ee')$ and $-\gamma(\ee')$ cannot be both broken due
  to Definition~\ref{def:ex_represent}\eqref{def:ex_rep_1}.)
  We get $W(\ee') 
  \subseteq
  \supp_+(\ee')  \cup \set{v(\ee')} \subseteq\supp_-(\DD)\cup\set{v(\ee')}$. On the other hand, $W(\ee) \subseteq
  \supp_+(\ee) \setminus \set{v(\ee')}
  \subseteq\supp_+(\DD)\setminus\set{v(\ee')}$.
  Therefore $W(\ee)$ and $W(\ee')$ are disjoint in this case as well.
\end{proof}
\begin{proof}[Proof of Theorem~\ref{t:main}]
 By Observation~\ref{obs:mu_eta}, $\max\set{\lambda(G), \mu(G)}\leq\eta(G)$ for every graph $G$. To prove the inequality $\eta(G)\leq\sigma(G)$, we can assume that $G$ is connected as for
  disconnected graphs both parameters $\eta$ and $\sigma$ are realized as the
  maximum of the respective parameter over the components of $G$,\footnote{For the parameter $\sigma$ it follows from its definition.} if it contains at least one
  edge (and the claim
  follows from the characterization of classes of graphs with
  $\sigma(G)\leq 0,1$ for graphs without edges; see the
  introduction).
  Applying Proposition~\ref{p:sigma_ex_rep_new} to any semivalid representation $L$ of $G$ such that $\eta(G)=\dim(L)$, we get that
  $\eta(G)\leq \sigma(G)$.
\end{proof}

\section{On the relations between \texorpdfstring{$\mu, \lambda$}{\unichar{"1D707}, \unichar{"1D706}} and \texorpdfstring{$\sigma$}{\unichar{"1D70E}}}
\label{s:gaps}

In this section, we further investigate the distinction between $\mu, \lambda$ and $\sigma$.
\Citet[Thm.~40]{sigma} proved that $\lambda(G)\leq\sigma(G)$ for every graph
$G$.
Moreover, \citet{Pendavingh_separation} provided an
example of a graph $G$ such that $\mu(G)\leq 18$ and $\lambda(G)\geq 20$. This
is the example that we mentioned in the introduction, which shows that
the parameters $\sigma$ and $\mu$ are different in general.  

%
%
Motivated by \citep[Lem.~4]{Pendavingh_separation} establishing lower bound on $\lambda(G-e)$ for $e\in E(G)$ with special properties, we present a similar lemma for the parameter $\eta$.
\begin{lemma}\label{lemma:intersect_broken_sigma}
Let $G=(V,E)$ be a connected graph and let $M\in\mc{M}(G)$. Suppose $F\subseteq E$ is such that 
\[
\bigcup F\cap\supp(x)\neq\emptyset
\]
for every broken $x\in\ker(M)$ inducing more than three connected components in $\indg{G}{\supp(x)}$.
Then $\corank(M)-\abs{F}\leq\eta(G)$. If, moreover, $G-F$ is connected, then $\corank(M)-\abs{F}\leq\eta(G-F)$.
\end{lemma}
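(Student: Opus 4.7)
The plan is to define
\[
L := \{x \in \ker(M) : x_u + x_v = 0 \text{ for every edge } uv \in F\},
\]
a subspace of $\ker(M)$ cut out by $|F|$ linear functionals, so $\dim L \geq \corank(M) - |F|$. I would then verify that $L$ is a semivalid representation of $G$; this immediately gives $\eta(G) \geq \dim L \geq \corank(M) - |F|$.

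Properties~\eqref{def:ex_rep_0} and~\eqref{def:ex_rep_4} of Definition~\ref{def:ex_represent} are inherited from $L \subseteq \ker(M)$: the former follows from the Perron--Frobenius discussion preceding Lemma~\ref{lemma:basic}, and the latter is Lemma~\ref{lemma:basic}\eqref{lemma:basic_2}. For property~\eqref{def:ex_rep_1}, I would argue by contradiction: a nonzero $x \in L$ violating~\eqref{def:ex_rep_1} must be broken and must induce at least four connected components in $G[\supp(x)]$. The hypothesis on $F$ would then supply an endpoint $u$ of some edge $uv \in F$ with $u \in \supp(x)$, and the defining constraint $x_u + x_v = 0$ would force $v \in \supp(x)$ with the opposite sign, producing an edge of $G$ between $\supp_+(x)$ and $\supp_-(x)$ and contradicting Lemma~\ref{lemma:basic}\eqref{lemma:basic_2}.

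The main obstacle is property~\eqref{def:ex_rep_2}, where Lemma~\ref{lemma:Pendavingh} is the crucial tool. Suppose $x \in L$ has inclusion-minimal support in $L$; after replacing $x$ by $-x$ if necessary, I may assume $x$ is broken. Property~\eqref{def:ex_rep_1}, already verified, forces $G[\supp(x)]$ to have exactly three components, and then Lemma~\ref{lemma:Pendavingh} yields $\dim D = 2$ for $D := \{y \in \ker(M) : \supp(y) \subseteq \supp(x)\}$. If $\dim(L \cap D) \geq 2$, I pick $y \in (L \cap D) \setminus \R x$ and any $w \in \supp(x)$, and form $z := y - (y_w/x_w)\, x \in L \cap D$: then $z$ is nonzero (since $y \notin \R x$) and $\supp(z) \subseteq \supp(x) \setminus \{w\} \subsetneq \supp(x)$, contradicting inclusion-minimality. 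Hence $L \cap D = \R x$, so some functional $\phi_e(y) := y_u + y_v$, for an edge $e = uv \in F$, is not identically zero on $D$. If both $u, v \notin \supp(x)$, then $\phi_e|_D \equiv 0$, a contradiction; if exactly one of them, say $u$, lies in $\supp(x)$, then $\phi_e(x) = 0$ combined with $x_v = 0$ forces $x_u = 0$, again a contradiction. Hence $u, v \in \supp(x)$, and $x_u + x_v = 0$ with $x_u, x_v \neq 0$ forces opposite signs, exhibiting an edge of $G$ between $\supp_+(x)$ and $\supp_-(x)$ that once more contradicts Lemma~\ref{lemma:basic}\eqref{lemma:basic_2}.

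For the ``moreover'' statement, the constraint $x_u + x_v = 0$ for $uv \in F$ ensures that, for every $x \in L$, either $u, v \notin \supp(x)$ or both $u, v \in \supp(x)$ with opposite signs; in particular, no edge of $F$ is internal to $\supp_+(x)$ or to $\supp_-(x)$. Consequently $G[\supp_\pm(x)] = (G - F)[\supp_\pm(x)]$, so brokenness is invariant and properties~\eqref{def:ex_rep_0}--\eqref{def:ex_rep_2} transfer verbatim from $G$ to $G - F$. For~\eqref{def:ex_rep_4} in $G - F$, a broken $x \in L$ has no edge of $G$ between $\supp_+(x)$ and $\supp_-(x)$ (Lemma~\ref{lemma:basic}\eqref{lemma:basic_2}), so the ``both endpoints in $\supp(x)$ with opposite signs'' case never occurs for broken $x$, meaning every $F$-edge lies entirely outside $\supp(x)$; this preserves both $G[\supp(x)]$ and the neighbourhoods appearing in~\eqref{def:ex_rep_4} when passing from $G$ to $G - F$, and~\eqref{def:ex_rep_4} transfers as well.
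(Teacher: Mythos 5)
Your proof is correct and follows essentially the same route as the paper: the same subspace $L$, with properties \eqref{def:ex_rep_0}, \eqref{def:ex_rep_1} and \eqref{def:ex_rep_4} coming from Lemma~\ref{lemma:basic} and property \eqref{def:ex_rep_2} from Lemma~\ref{lemma:Pendavingh}. The only cosmetic difference is in the verification of \eqref{def:ex_rep_2}, where the paper observes directly that no edge of $F$ meets $\supp(x)$, so that $D\subseteq L$ and $\dim D\geq 2$ contradicts minimality, whereas you first deduce $L\cap D=\R x$ and then rule out each possible position of an $F$-edge relative to $\supp(x)$; the two arguments are interchangeable.
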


\begin{proof}
Let $L:=\set{y\in\ker(M)\colon y_{u}+y_{v}=0 \ \forall uv\in F}$. Clearly,
  $\dim(L)\geq\corank(M)-\abs{F}$. We show that $L$ is a semivalid
  representation of $G$, and for the `moreover' part, that it is also a semivalid representation of $G-F$.

To verify that $L$ is a semivalid representation of $G$,
it is immediate that condition~\eqref{def:ex_rep_0} of Definition~\ref{def:ex_represent} is satisfied since it holds for every nonzero vector in $\ker(M)$ (e.g., see Lemma~\ref{lemma:basic}\eqref{lemma:basic_1}).
Next we check condition~\eqref{def:ex_rep_1} of Definition~\ref{def:ex_represent}.
Assume it is not satisfied. Take a broken $y\in L$, which induces more than three connected components in $\indg{G}{\supp(y)}$. By the assumption on $F$, there is $uv\in F$ such that $\set{u,v}\cap\supp(y)\neq\emptyset$. This means that $y_{u}=-y_{v}\neq 0$.
However, this is impossible by Lemma~\ref{lemma:basic}\eqref{lemma:basic_2}.

Now we turn to condition~\eqref{def:ex_rep_2} of Definition~\ref{def:ex_represent}. Again, we assume that the condition is not satisfied. Take $y\in L$ which has inclusion-miminal support among nonzero vectors in $L$, but at least one of the graphs $\indg{G}{\supp_\pm(y)}$ is not connected.
By the definition of $L$ and Lemma~\ref{lemma:basic}\eqref{lemma:basic_2}, $\bigcup F\subseteq V\setminus\supp(y)$. However, this means that $D:=\set{x\in\ker(M)\colon \supp(x)\subseteq\supp(y)}$ is a subspace of $L$.
On the other hand, Lemma~\ref{lemma:Pendavingh} says that $\dim(D)+1$ is equal to the number of connected components of $\indg{G}{\supp(y)}$. This means that $\dim(D)\geq 2$, which implies that there is $x\in D$ with strictly smaller support than $y$; a contradiction.

Lemma~\ref{lemma:basic}\eqref{lemma:basic_2} proves that the condition~\eqref{def:ex_rep_4} is satisfied as well; thus, $L$ is a semivalid representation of $G$.

To verify that $L$ is a semivalid representation of $G-F$, we first observe that if we take a nonzero $y\in L$, none of the edges $uv\in F$ can have both endpoints in $\supp_+(y)$ or $\supp_-(y)$, since $y_{u}+y_{v}=0$.
Therefore, removing $F$ from $G$ cannot disconnect any of the connected components of $\indg{G}{\supp_+(y)}$. Consequently, $L$ satisfies both requirements \eqref{def:ex_rep_1} and \eqref{def:ex_rep_2} of Definition~\ref{def:ex_represent} for $G-F$.
Moreover, none of the edges of $uv\in F$ can have one endpoint in $\supp(y)$ and the other in $V\setminus \supp(y)$; again, because $y_u+y_v=0$. Thus, removing $F$ cannot change $N(\supp_\pm(y))$ nor $N(C)$ for any of the connected components $C$ induced by $\supp(y)$. Therefore, $L$ also satisfies the requirement~\eqref{def:ex_rep_4} of Definition~\ref{def:ex_represent}; we conclude that $L$ is a semivalid representation of $G-F$.
\end{proof}

The next lemma is an easy consequence of Lemma~\ref{lemma:basic}\eqref{lemma:basic_2}. It generalizes \citep[Lem.~5]{Pendavingh_separation}. 
\begin{lemma}\label{lemma:d_reg_broken}
Let $G=(V,E)$ be a connected graph with maximum degree at most $d$ and let $M\in\mc{M}(G)$. Let $x\in\ker(M)$ be a broken vector. Then
\begin{enumerate}
	\item[{\normalfont(\mylabel{lemma:d_reg_broken_1}{i})}] $\indg{G}{\supp(x)}$ has at most $d$ connected components,
	\item[{\normalfont(\mylabel{lemma:d_reg_broken_2}{ii})}] if $\indg{G}{\supp(x)}$ has exactly $d$ connected components, then $\indg{G}{V\setminus\supp(x)}$ has no edges and $V\setminus\supp(x)=N(\supp(x))$.
\end{enumerate}  
\end{lemma}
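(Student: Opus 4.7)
The plan is to reduce both parts to Lemma~\ref{lemma:basic}\eqref{lemma:basic_2} via a simple degree count. Since $x$ is broken, $\indg{G}{\supp_+(x)}$ is disconnected. By Lemma~\ref{lemma:basic}\eqref{lemma:basic_2} there are no edges between $\supp_+(x)$ and $\supp_-(x)$, and the components of $\indg{G}{\supp(x)}$ are exactly the components of $\indg{G}{\supp_+(x)}$ together with those of $\indg{G}{\supp_-(x)}$; moreover each such component $C$ satisfies $N(C) = N(\supp(x)) =: S$. In particular, $\indg{G}{\supp(x)}$ has at least three components, hence is disconnected; combined with the hypothesis that $G$ is connected, this forces $\supp(x) \subsetneq V$ and $S \neq \emptyset$.

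For \eqref{lemma:d_reg_broken_1}, let $C_1,\dots,C_k$ denote the components of $\indg{G}{\supp(x)}$ and pick any $v \in S$. Since $v \in N(C_i)$ for every $i$, the vertex $v$ has at least one neighbor in each $C_i$, so $\deg(v) \geq k$. The maximum degree assumption then gives $k \leq d$.

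For \eqref{lemma:d_reg_broken_2}, assume $k=d$. The above degree count becomes an equality for every $v \in S$: each such $v$ has degree exactly $d$ and has all of its neighbors inside $\supp(x)$ (precisely one per component). Consequently, no vertex in $S$ has a neighbor in $V \setminus \supp(x)$, so $\indg{G}{S}$ has no edges and there are no edges between $S$ and $R := V \setminus (\supp(x) \cup S)$. I then plan to close the argument by invoking connectedness of $G$: since there are also no edges between $R$ and $\supp(x)$ by the definition of $R$, any nonempty $R$ would be a connected component of $G$ disjoint from $\supp(x)$, contradicting that $G$ is connected and $\supp(x) \neq \emptyset$. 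Hence $R = \emptyset$, i.e.\ $V\setminus\supp(x) = S = N(\supp(x))$, and $\indg{G}{V\setminus\supp(x)} = \indg{G}{S}$ is edgeless. The main (minor) point of care is ensuring $S\neq\emptyset$ before picking $v$, which is exactly what the preliminary observation guarantees.
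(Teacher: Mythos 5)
Your proof is correct and follows essentially the same route as the paper: both parts rest on Lemma~\ref{lemma:basic}\eqref{lemma:basic_2} (every vertex of $N(\supp(x))$ is adjacent to each component of $\indg{G}{\supp(x)}$), giving the degree count for \eqref{lemma:d_reg_broken_1}, and in the equality case forcing all neighbours of $N(\supp(x))$ into $\supp(x)$, whence connectedness of $G$ rules out any remaining vertices. The paper's write-up is just slightly terser; your preliminary check that $N(\supp(x))\neq\emptyset$ is the same observation the paper makes in its first sentence.
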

\begin{proof}
Since $G$ is connected, Lemma~\ref{lemma:basic}\eqref{lemma:basic_2} implies that $N(\supp(x))$ is nonempty, and moreover, that every vertex in $N(\supp(x))$ is connected to each component of $\indg{G}{\supp(x)}$; thus, the number of such components cannot be greater than the maximum degree in $G$. This proves the first part.

For the second part, the above argument shows that $\indg{G}{N(\supp(x))}$ does not contain any edge. Consider a vertex $v\in V\setminus\supp(x)\setminus N(\supp(x))$.  Since $G$ is connected and $v\notin N(\supp(x))$, there must be a path from $v$ to $N(\supp(x))$. However, this is not possible, since all vertices in $N(\supp(x))$ have their neighbours only in $\supp(x)$.
\end{proof}

We restate here the following theorem of \citet[Thm.~5]{Pendavingh_separation}, which is very useful for proving upper bounds on $\mu(G)$.
\begin{thm}[{\citep[Thm.~5]{Pendavingh_separation}}]\label{thm:mu_edge_bound}
Let $G=(V,E)$ be a connected graph. Then either $G=K_{3,3}$, or $\abs{E}\geq\binom{\mu(G)+1}{2}$.
\end{thm}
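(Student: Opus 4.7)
The plan is to combine the Strong Arnold Hypothesis (SAH) with additional linear algebra coming from Perron--Frobenius. Let $M \in \mc{M}(G)$ realize $\mu(G) = k$ while satisfying SAH, and set $U := \ker M$, so that $\dim U = k$. A short linear algebra computation shows that the space $\mc{X}$ of symmetric matrices $X \in \R^{V \times V}$ with $MX = 0$ is exactly $\{U_0 C U_0^T : C \text{ is a symmetric } k \times k \text{ matrix}\}$, where the columns of $U_0$ form a basis of $U$; in particular $\dim \mc{X} = \binom{k+1}{2}$.

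By SAH, the linear evaluation map $\mc{X} \to \R^V \oplus \R^E$ sending $X$ to the tuple of its diagonal entries together with its entries on edges is injective, so immediately
\[
\binom{\mu(G)+1}{2} \leq n + |E|,
\]
which falls short of the desired inequality by the additive term $n$. To absorb this, I would use the Perron--Frobenius eigenvector $v_0 \gg 0$ corresponding to the unique negative eigenvalue $\lambda_1$ of $M$. Left-multiplying $MX = 0$ by $v_0^T$ yields $v_0^T X = 0$ for every $X \in \mc{X}$. Restricted to the subspace $\mc{Y} := \mc{X} \cap \{X : X_{uv} = 0 \text{ for all } uv \in E\}$, this relation expresses each diagonal entry $X_{vv}$ as a linear combination of off-diagonal non-edge entries of $X$. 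The target bound $\binom{\mu(G)+1}{2} \leq |E|$ is then equivalent to the statement $\mc{Y} = \{0\}$.

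The main obstacle is showing $\mc{Y} = \{0\}$ with $K_{3,3}$ as the sole exception. I would proceed by induction on $|V|$, attempting to remove a low-degree vertex $v$ using relations of the form $\mu(G) \leq \mu(G - v) + 1$ together with $|E(G)| = |E(G - v)| + \deg(v)$ to close the induction whenever $\deg(v)$ is sufficiently small. When no such vertex is available, the graph has high minimum degree, which combined with the SAH dimension bound $\binom{k+1}{2} \leq n + |E|$ forces $G$ to be close to complete. The residual small cases are then analyzed by hand; the rank and sign constraints imposed by having exactly one negative eigenvalue, together with the bipartite non-edge structure arising from the Perron relation, isolate $G = K_{3,3}$ as the unique exception.
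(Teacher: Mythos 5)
First, note that the paper does not prove this statement: Theorem~\ref{thm:mu_edge_bound} is quoted verbatim from \citep[Thm.~5]{Pendavingh_separation} and used as a black box, so there is no in-paper proof to compare against. Your setup is sound and matches the standard opening of Pendavingh's argument: the space $\mc{X}=\{X=X^T:MX=0\}$ has dimension $\binom{\mu(G)+1}{2}$, SAH says the evaluation on diagonal-plus-edge entries is injective on $\mc{X}$, and the Perron--Frobenius relation $v_0^TX=0$ is a correct (if redundant --- it just restates that the columns of $X$ lie in $\ker M\perp v_0$) source of linear relations among the entries. It is also true that $\mc{Y}=\{0\}$, where $\mc{Y}$ is the kernel of $X\mapsto (X_{uv})_{uv\in E}$ on $\mc{X}$, would imply the bound (it is an implication, not an equivalence as you write).

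The genuine gap is that the claim $\mc{Y}=\{0\}$ unless $G=K_{3,3}$ --- which is the entire content of the theorem --- is never established; what you offer in its place is a plan, and the plan as described does not work. The inductive step you propose gives
\[
\binom{\mu(G)+1}{2}\;\leq\;\binom{\mu(G-v)+1}{2}+\mu(G-v)+1\;\leq\;\abs{E(G)}-\deg(v)+\mu(G-v)+1,
\]
which closes only when $\deg(v)\geq\mu(G-v)+1\geq\mu(G)$, i.e.\ precisely for \emph{high}-degree vertices --- the opposite of ``whenever $\deg(v)$ is sufficiently small.'' Moreover, the complementary regime you invoke is vacuous: if every vertex has degree at least $\mu(G)$, then $\abs{E}\geq n\mu(G)/2\geq(\mu(G)+1)\mu(G)/2=\binom{\mu(G)+1}{2}$ follows immediately from $n\geq\mu(G)+1$, with no need for SAH, Perron--Frobenius, or any claim that $G$ is ``close to complete.'' So the only hard case is a connected graph containing a vertex of degree less than $\mu(G)$, and your argument says nothing about it; this is exactly where the structural analysis of nonzero $X\in\mc{Y}$ (whose columns are kernel vectors of $M$ supported away from the neighborhood of their index, to which Lemma~\ref{lemma:basic}-type facts apply) is needed, and where the exceptional graph $K_{3,3}$ (with $\dim\mc{X}=10>9=\abs{E}$, forcing $\mc{Y}\neq\{0\}$) genuinely appears. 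As it stands, the proposal reduces the theorem to an unproved statement of essentially the same difficulty.
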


\paragraph{Finite projective planes.}
Let $H_q$ denote the incidence graph of a finite projective plane of order $q$. It is a $(q+1)$-regular bipartite graph with parts of size $q^2+q+1$.
Using Theorem~\ref{thm:mu_edge_bound}, this implies that
\[
\mu(H_q)\leq\floor*{\frac{-1+\sqrt{1+8\abs{E(H_q)}}}{2}}=\floor*{\frac{-1+\sqrt{1+8(q^2+q+1)(q+1)}}{2}}.
\]

Let $A_q$ be the adjacency matrix of $H_q$. It is known that the spectrum of $A_q$ is
\[\br*{(q+1)^{(1)}, \sqrt{q}^{(q^2+q)}, -\sqrt{q}^{(q^2+q)}, -(q+1)^{(1)}};
\]
for a reference, see, e.g. \citep[Sec.~3.8.1, eq.~(3.38)]{regular_graphs} (for that reference, note that a finite projective plane of order $q$ is a symmetric BIBD with parameters $p,b = q^2 + q + 1$, $k, r = q+ 1$, $\ell = 1$).
We further define $M_q:=\sqrt{q}I-A_q$. Clearly, $M_q\in \mc{M}(H_q)$ and $\corank(M_q)=q^2+q$.

\begin{prop}
  \label{p:9_11}
$\mu(H_3)\leq 9$ and $\sigma(H_3)\geq 11$.
\end{prop}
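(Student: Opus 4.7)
The upper bound $\mu(H_3)\le 9$ is immediate from Theorem~\ref{thm:mu_edge_bound}: the graph $H_3$ has $26$ vertices (hence it is not $K_{3,3}$), and $|E(H_3)|=(q^2+q+1)(q+1)=13\cdot 4=52<55=\binom{11}{2}$, so the inequality $\binom{\mu(H_3)+1}{2}\le |E(H_3)|$ forces $\mu(H_3)\le 9$.

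For the lower bound $\sigma(H_3)\ge 11$ the plan is to invoke Theorem~\ref{t:main} (i.e.\ $\sigma(G)\ge\eta(G)$) and reduce everything to showing that $\eta(H_3)\ge 11$. To this end I would apply Lemma~\ref{lemma:intersect_broken_sigma} to the matrix $M_3=\sqrt{3}\,I-A_3$. The excerpt has already recorded that $M_3\in\mc{M}(H_3)$ with $\corank(M_3)=q^2+q=12$, so choosing an arbitrary edge $e\in E(H_3)$ and setting $F:=\{e\}$ would yield $\eta(H_3)\ge 12-1=11$ as soon as the hypothesis of the lemma is verified.

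Verifying that hypothesis is the entire content of the proof. I would need to show that every broken $x\in\ker(M_3)$ whose support induces more than three components of $H_3$ satisfies $\supp(x)\cap e\neq\emptyset$. Since $H_3$ is $4$-regular, Lemma~\ref{lemma:d_reg_broken}\eqref{lemma:d_reg_broken_1} guarantees that such an $x$ can induce only exactly four components of $H_3[\supp(x)]$, and then Lemma~\ref{lemma:d_reg_broken}\eqref{lemma:d_reg_broken_2} forces $H_3[V\setminus\supp(x)]$ to be edgeless. Consequently the two endpoints of $e$ cannot both lie outside $\supp(x)$, so $\supp(x)\cap e\neq\emptyset$ for any choice of $e$. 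Combining Lemma~\ref{lemma:intersect_broken_sigma} with Theorem~\ref{t:main} then delivers $\sigma(H_3)\ge\eta(H_3)\ge 11$. The only nontrivial point is recognising that the regularity of $H_3$, via Lemma~\ref{lemma:d_reg_broken}\eqref{lemma:d_reg_broken_2}, trivialises the support-covering hypothesis of Lemma~\ref{lemma:intersect_broken_sigma} for any single edge; once this is observed, neither bound requires any further case analysis.
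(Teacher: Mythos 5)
Your proof is correct and follows essentially the same route as the paper: the edge-count bound from Theorem~\ref{thm:mu_edge_bound} for $\mu(H_3)\le 9$, and Lemma~\ref{lemma:intersect_broken_sigma} applied to $M_3$ with $F=\{e\}$, with the hypothesis verified via Lemma~\ref{lemma:d_reg_broken} and $4$-regularity. The only (immaterial) difference is that you use the first conclusion of Lemma~\ref{lemma:intersect_broken_sigma} to get $\eta(H_3)\ge 11$ directly, whereas the paper passes through $\eta(H_3-e)\ge 11$ and the monotonicity $\sigma(H_3)\ge\sigma(H_3-e)$ — a detour it needs only because $\sigma(H_3-e)\ge 11$ is reused later in the proof of Theorem~\ref{t:gap}.
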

\begin{proof}
The bound on $\mu(H_q)$ above gives $\mu(H_3)\leq 9$. Furthemore, $\corank(M_3)=12$. Now choose any edge $e$ of $H_3$. Since $H_3$ is 4-regular, $e\cap\supp(x)\neq\emptyset$ for every broken $x\in\ker(M_3)$ inducing more than three connected components in $\indg{H_3}{\supp(x)}$ by Lemma~\ref{lemma:d_reg_broken}. Thus, by Lemma~\ref{lemma:intersect_broken_sigma} and Proposition~\ref{p:sigma_ex_rep_new} we see that $\sigma(H_3)\geq\sigma(H_3-e)\geq\eta(H_3-e)\geq 11$. 
\end{proof}


The separation between $\mu$ and $\sigma$ can be pushed even further by removing a small part from $H_3$ to obtain a graph $G$ with $\mu(G)\leq 7$ and $\sigma(G)\geq 8$, as was announced in Theorem~\ref{t:gap} in the introduction.
\begin{proof}[Proof of Theorem~\ref{t:gap}]
We choose three vertices $v_1, v_2, v_3$ of $H_3$ corresponding to three points of the finite projective plane of order $3$ not lying on a single line. Let $G':=H_3-\set{v_1, v_2, v_3}$.
We observe that $G'$ contains three vertices of degree two, since every two points of a projective plane lie on a single line. Next, we choose an edge $e\in E(G')$ adjacent to a vertex of degree three in $G'$ and set $G:=G'-e$.

Observe that $G$ contains four vertices of degree two; for each of these four vertices we choose one of the two edges incident to it and put it into a set $F$. We write $G/F$ for the graph resulting from a contraction of the edges of $F$ in $G$.
Since a subdivision of edges preserves $\mu(H)$ for graphs $H$ with $\mu(H)\geq 3$ by \citep[Thm.~2.12]{CdV_main}, we get that $\mu(G)=\mu(G/F)$.
The graph $G/F$ has $4\times 13-12-1-4=35$ edges. This  means that $\mu(G)=\mu(G/F)\leq 7$ by Theorem~\ref{thm:mu_edge_bound}. On the other hand, a removal of a vertex can decrease $\sigma$ by at most $1$ \citep[Thm.~28]{sigma}.
  As $\sigma(H_3-e)\geq 11$ (this was substantiated in the proof of Proposition~\ref{p:9_11} above), we deduce that $\sigma(G)\geq 8$.
\end{proof}

The proof of the following proposition is a direct generalization of the proof of \citep[Thm.~1]{Pendavingh_separation}.
\begin{prop}\label{prop:bound_on_lambda}
Let $G=(V,E)$ be a connected graph of maximum degree at most $d$ and $M\in\mc{M}(G)$. Then $\lambda(G)\geq \corank(M)-d+1$.
\end{prop}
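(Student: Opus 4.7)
Following the strategy of~\citep[Thm.~1]{Pendavingh_separation}, I will construct a valid representation $L\subseteq\ker(M)$ of $G$ with $\dim(L)\geq\corank(M)-d+1$. Choose a vertex $v\in V(G)$ and set $N(v)=\{w_1,\ldots,w_k\}$ with $k=\deg(v)\leq d$. Define
\[
L := \{x\in\ker(M)\colon x_{w_1}=x_{w_2}=\cdots=x_{w_k}\}.
\]
Since this imposes at most $k-1\leq d-1$ linear constraints on $\ker(M)$, we have $\dim(L)\geq\corank(M)-d+1$.

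It then remains to verify that $L$ is a valid representation, i.e., that $\indg{G}{\supp_+(x)}$ is nonempty and connected for every nonzero $x\in L$. Nonemptiness is guaranteed by the Perron--Frobenius argument recalled in Subsection~\ref{ss:semivalid}. For connectedness I would argue by contradiction: assume some nonzero $x\in L$ is broken and let $c:=x_{w_1}=\cdots=x_{w_k}$ be the common value. Then $N(v)$ is contained in $\supp_+(x)$ if $c>0$, in $\supp_-(x)$ if $c<0$, and in $V\setminus\supp(x)$ if $c=0$.

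Consider first the case $c>0$. Lemma~\ref{lemma:basic}\eqref{lemma:basic_2} rules out $v\in\supp_-(x)$ (the edge $vw_i$ would join $\supp_-(x)$ to $\supp_+(x)$) and also rules out $v\in V\setminus\supp(x)$: indeed then $v\in N(\supp(x))$ would be adjacent to every component of $\indg{G}{\supp(x)}$, in particular to some vertex of $\supp_-(x)$, contradicting $N(v)\subseteq\supp_+(x)$. Hence $v\in\supp_+(x)$ and, since $N(v)\subseteq\supp_+(x)$ has no edges to other components of $\indg{G}{\supp_+(x)}$, all of $N(v)$ lies in the same connected component $C$ of $\indg{G}{\supp_+(x)}$ as $v$. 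The remaining contradiction then comes from combining the equation $(Mx)_v=M_{vv}x_v+c\sum_{w\in N(v)} M_{vw}=0$, the common-neighborhood property $N(C)=N(C')=N(\supp(x))$ for any other component $C'$ of $\indg{G}{\supp(x)}$, and the degree bound of Lemma~\ref{lemma:d_reg_broken}. The case $c<0$ is symmetric. The case $c=0$ is handled through $(Mx)_v=M_{vv}x_v=0$, which under a suitable choice of $v$ with $M_{vv}\neq 0$ forces $x_v=0$, and then a similar structural argument using Lemmas~\ref{lemma:basic}\eqref{lemma:basic_2} and~\ref{lemma:d_reg_broken} closes the remaining subcases.

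The main technical obstacle is making the final contradictions precise in the subcase where $v\in\supp(x)$, where the constraint on $N(v)$ is compatible with the component structure and one must exploit the equation at $v$ together with the rigidity of the common external neighborhood. This step follows \citep[proof of Thm.~1]{Pendavingh_separation} almost verbatim, with only minor bookkeeping adjustments to pass from the regular setting to the max-degree-at-most-$d$ setting provided by Lemma~\ref{lemma:d_reg_broken}.
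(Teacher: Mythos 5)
There is a genuine gap: the specific subspace you construct need not be a valid representation, and the case analysis cannot be completed. If $x\in\ker(M)$ is a broken vector with $\supp(x)\cap N(v)=\emptyset$, then $x$ vanishes on $N(v)$, hence trivially satisfies $x_{w_1}=\cdots=x_{w_k}$ and lies in your $L$; this is exactly your case $c=0$, and there is no contradiction to be derived there, because nothing forces the support of a broken vector to come near $v$. (Lemma~\ref{lemma:d_reg_broken}\eqref{lemma:d_reg_broken_2} only forces $R(x)=\emptyset$ when $\indg{G}{\supp(x)}$ has exactly $d$ components; broken vectors with fewer components can be supported entirely away from $v\cup N(v)$.) In the degenerate extreme where $v$ has degree $1$ your constraint is vacuous and $L=\ker(M)$, which certainly may contain broken vectors. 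Even the case $c>0$ does not close: the equation $(Mx)_v=M_{vv}x_v+c\sum_{w\in N(v)}M_{vw}=0$ merely pins down the sign of $M_{vv}x_v$, and no combination of it with $N(C)=N(\supp(x))$ and the component bound rules out a broken vector that is constant and positive on $N(v)$. The underlying issue is that a \emph{specific} codimension-$(d-1)$ subspace has no reason to avoid the broken locus; only a \emph{generic} one does. The conditional results in the paper (Lemma~\ref{lemma:intersect_broken_sigma}, and \citep[Lem.~4]{Pendavingh_separation}) that do use explicit linear constraints of this flavor need the extra hypothesis that the chosen edge set meets the support of every relevant broken vector — a hypothesis you cannot guarantee for $N(v)$.

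The paper's proof proceeds differently and unconditionally. For each broken $x$, the subspace $D(x)=\set{y\in\ker(M)\colon\supp(y)\subseteq\supp(x)}$ has dimension at most $d-1$, by Lemma~\ref{lemma:Pendavingh} combined with Lemma~\ref{lemma:d_reg_broken}\eqref{lemma:d_reg_broken_1}. Taking $x$ over the finitely many inclusion-maximal supports of broken vectors, every broken vector of $\ker(M)$ lies in one of these finitely many subspaces of dimension at most $d-1$, so a generic subspace $L\subseteq\ker(M)$ of dimension $\corank(M)-d+1$ meets each $D(x)$ only in $\set{\mb{0}}$ and therefore contains no broken vector at all. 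If you want to repair your argument, you should replace the explicit choice of constraints by this general-position selection.
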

\begin{proof}
Let $x\in\ker(M)$ be a broken vector. The subspace
\[
D(x):=\set{y\in\ker(M)\colon \supp(y)\subseteq \supp(x)}
\]
has dimension at most $d-1$ by Lemma~\ref{lemma:Pendavingh} and Lemma~\ref{lemma:d_reg_broken}\eqref{lemma:d_reg_broken_1}. Let $B\subseteq \ker(M)$ be a set consisting of all broken vectors $x$ with inclusion-maximal support among broken vectors in $\ker(M)$. This implies that for every broken $y\in\ker(M)$ there is $x\in B$ such that $y\in D(x)$. Therefore, every broken vector in $\ker(M)$ is contained in a linear subspace of $\ker(M)$ of dimension at most $d-1$.

Since the number of different subsets $\supp(x)\subseteq V$ is finite, the number of distinct subspaces $D(x)$ for $x\in B$ is finite as well. 
%
Therefore, there is a linear subspace $L\subseteq \ker(M)$ of dimension at least $\corank(M)-d+1$ such that for every $x\in B$ it holds that $L\cap D(x)=\set{\mb{0}}$.
Consequently, $L$ is a valid representation of $G$, which finishes the proof.
\end{proof}

Applying this proposition to the finite projective planes we immediately obtain an asymptotic separation of order $\mu(H_q)\in
O\br*{q^{3/2}}$ and $\sigma(H_q)\geq\lambda(H_q)\geq q^2$, which was stated in Theorem~\ref{t:asymptotic} in the introduction.


\begin{proof}[Proof of Theorem~\ref{t:asymptotic}]
Since $\corank(M_q)=q^2+q$ and the degree of every vertex in $H_q$ is $q+1$, Proposition~\ref{prop:bound_on_lambda} implies that $\lambda(H_q)\geq q^2$. The fact that $\lambda(G)\leq\sigma(G)$ for every graph $G$ was proven by \citet[Thm.~40]{sigma}, as was already mentioned before.

The upper bound on $\mu(H_q)$ follows directly from Theorem~\ref{thm:mu_edge_bound}.
\end{proof}


\section*{Acknowledgment}
We would like to thank Radek Hu\v{s}ek and Robert \v{S}\'{a}mal for general
discussions on the parameter $\mu$. We would also like to thank Arnaud de
Mesmay for pointing us to the paper~\citep{sigma} and Rose McCarty for
pointing us to~\cite{foisy03}.


\printbibliography

\newcommand{\poly}{\textrm{poly}}


\appendix

\section{An explicit PSPACE algorithm for \texorpdfstring{$\mu$}{\unichar{"1D707}}}
\label{a:mu}

In this appendix we describe an explicit algorithm that for every graph $G=(V,E)$ on $n$ vertices and every $k\in\set{0,\ldots, n}$ decides in \emph{space} polynomial in $n$ whether $\mu(G)\geq k$ or not.
The strategy is to produce an existential sentence $\phi_{G,k}$ in the language $\mathfrak{L}$ of the first-order theory of the reals\footnote{The language $\mathfrak{L}$ allows one to use real variables and symbols $=, \neq, \leq, \geq, <, >, 0, 1, +, -, \cdot$, logical connectives and quantifiers over the real numbers. Thus, one can use equalities and inequalities of polynomials of several real variables with integer coefficients.} of length polynomial in $n$ which is true if and only if $\mu(G)\geq k$. The rest then follows by the algorithm of \citet{PSPACE_existsR} for deciding the existential theory of the reals ($\exists\R$).

\paragraph{Notation.}
We write $E_{i,j}$ for the matrix with one at the position $(i,j)$ and zero everywhere else.
Let $G=(V,E)$ be a graph and $n:=\abs{V}$. We write $\mc{O}(G)$ for the subset of $\R^{V\times V}$ consisting of symmetric matrices $M$ such that $M_{u,v}<0$ for every $uv\in E$ and $M_{u,v}=0$ for every $uv\notin E$. There is no requirement on the diagonal entries of $M$.

Let $p:=\binom{n}{2}-\abs{E}$. Given a matrix $M\in\mc{O}(G)$, we define a $p\times n^2$ matrix $N(M)$ as follows: the columns of $N(M)$ consist of vectors of the form
\[
\br*{ME_{i,j}+E_{i,j}^T M}_{uv\notin E}
\]
where $1\leq i,j\leq n$. That is, we take the matrix $ME_{i,j}+E_{i,j}^T M$ and turn its entries corresponding to the nonedges of $G$ into a vector (assuming some fixed ordering on the nonedges of $G$), which then constitutes a column of the matrix $N(M)$. The role of $N(M)$ will be explained below.


\medskip
The definition of the parameter $\mu$ says that $\mu(G)\geq k$ if and only if there is a symmetric matrix $M\in \mc{O}(G)$ with exactly one negative eigenvalue and corank at least $k$ that satisfies SAH (see Subsection~\ref{s:CdV}).
It is not difficult to see that one can transfer this statement into a formula in the language $\mathfrak{L}$. Additionaly, one gets easily an $\exists\forall$-sentence of length polynomial in $n$.
The reason for the presence of the universal quantifier is the definition of SAH, which is a condition on \emph{all} matrices of certain form. The main ingredient in changing this formula into an existence formula is the following equivalent characterization of SAH by \citet{SAH}:

\begin{thm}[{\citep[Thm.~31(a)]{SAH}}]\label{thm:SAH}
$M\in\mc{O}(G)$ satisfies SAH if and only if the matrix $N(M)$ has a full rank, i.e., its rank is $\binom{n}{2} - \abs{E}$. 
\end{thm}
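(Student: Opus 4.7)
The plan is to recognize both SAH and the full-rank condition on $N(M)$ as two features of a single adjoint pair of linear maps, and then invoke the standard fact that a linear map is surjective if and only if its adjoint is injective.

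First, I would set up the ambient space of SAH. Let $S \subseteq \R^{V \times V}$ denote the subspace of symmetric matrices whose entries vanish on the diagonal and on every edge of $G$. A dimension count gives $\dim S = \binom{n}{2} - \abs{E} = p$, with the non-edge off-diagonal entries providing a canonical isomorphism $S \cong \R^p$. In this language, SAH is literally the statement that the linear map $\Phi \colon S \to \R^{V \times V}$, $\Phi(X) := MX$, is injective.

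Next, I would reinterpret $N(M)$ as the matrix of a linear map. For any $A = \sum_{i,j} a_{ij} E_{i,j} \in \R^{V\times V}$ the matrix $MA + A^T M$ is symmetric (since $M^T = M$); let $\Psi \colon \R^{V \times V} \to S$ be obtained by restricting $MA + A^T M$ to its non-edge off-diagonal coordinates, i.e.\ by applying the orthogonal projection $\pi_S$ onto $S$. By linearity, the column of $N(M)$ indexed by $(i,j)$ is exactly $\Psi(E_{i,j})$, so the column span of $N(M)$ equals the image of $\Psi$. Hence $N(M)$ has full rank $p$ if and only if $\Psi$ is surjective.

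The heart of the proof is then a short adjoint computation with respect to the Frobenius inner product $\langle A, B\rangle = \operatorname{tr}(A^T B)$. For $X \in S$ and $A \in \R^{V \times V}$, using $X = \pi_S(X)$, $X^T = X$, $M^T = M$, and the cyclic property of the trace, each of the two terms in
\[
\langle X, \Psi(A)\rangle = \langle X, MA + A^T M\rangle = \operatorname{tr}(X M A) + \operatorname{tr}(X A^T M)
\]
simplifies to $\operatorname{tr}(X M A) = \langle MX, A\rangle$, so $\Psi^\ast = 2\Phi$. Since a linear map is surjective iff its adjoint is injective, the conditions ``$N(M)$ has rank $p$'' and ``$MX = 0 \Rightarrow X = 0$ on $S$'' coincide, which is exactly the claim. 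The only point I would verify carefully is the bookkeeping around $\pi_S$: SAH forces $X$ to vanish on both the diagonal and the edge entries, and the rows of $N(M)$ must be indexed by exactly the complementary (non-edge, off-diagonal) positions, so that $\pi_S$ is honestly the orthogonal projection and the adjoint identity holds up to the harmless global factor $2$.
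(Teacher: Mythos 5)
Your argument is correct. Note that the paper does not prove this statement at all --- it is quoted verbatim from \citep{SAH} (Thm.~31(a)) and used as a black box in Appendix~\ref{a:mu} --- so there is no in-paper proof to compare against. Your proof is the natural self-contained one: SAH is injectivity of $X \mapsto MX$ on the space $S$ of symmetric matrices supported on non-edges, fullness of $\rank N(M)$ is surjectivity of $A \mapsto \pi_S(MA + A^TM)$, and the trace computation $\langle X, MA + A^TM\rangle = 2\langle MX, A\rangle$ for $X = X^T$ exhibits these two maps as adjoints up to the factor $2$. All the bookkeeping you flag does check out: the rows of $N(M)$ are indexed by the $\binom{n}{2}-\abs{E}$ off-diagonal non-edge positions, which match the free coordinates of $S$, and the non-orthonormality of the coordinate basis of $S$ is harmless since rank, surjectivity, and the surjective-iff-adjoint-injective equivalence are all basis-independent. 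This is a complete proof of the cited theorem.
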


Informally, this theorem allows us to express that $M$ satisfies SAH as a formula saying `there is a matrix $N$ of full rank such that $N=N(M)$'. Clearly, given $M$, the matrix $N(M)$ can be constructed in time polynomial in the length of the description of $M$. 
In addition, we use a simple trick that enables us to prescribe the signs of the eigenvalues of $M$ and the rank of $N$; instead of searching directly for $M$ and $N$, we look for their eigendecomposition and singular value decomposition, respectively.

\begin{prop}
There is an algorithm working in time polynomial in $n$ that given as an input a graph $G=(V,E)$ with $n=\abs{V}$ and any $k\in\set{0, \ldots, n-1}$ constructs an $\exists$-sentence $\phi=\phi_{G,k}$ in the prenex normal form in the language $\mathfrak{L}$ of size $O(n^6)$ using $O(n^4)$ quantified variables such that $\mu(G)\geq k$ if and only if
$\phi_{G,k}$ is true.
\end{prop}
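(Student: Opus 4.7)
The plan is to encode the four ingredients of $\mu(G)\geq k$---the sign pattern on the entries of $M$, a single negative eigenvalue of multiplicity one, corank at least $k$, and the Strong Arnold hypothesis---as one existential first-order sentence in $\mathfrak{L}$. The sign, spectrum and corank constraints will be handled via the spectral decomposition $M=QDQ^T$, while the \emph{a priori} universal-looking condition SAH will be converted into an existential one by combining Theorem~\ref{thm:SAH} with the observation that a $p\times n^2$ matrix has full row rank $p$ if and only if it admits a right inverse.

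Concretely, I would existentially introduce: the $n^2$ entries of a matrix $Q$, the $n$ diagonal entries of a matrix $D$, the $n^2$ entries of a matrix $M$, and the $n^2 p$ entries of an auxiliary $n^2\times p$ matrix $B$, where $p=\binom{n}{2}-|E|$; in total $O(n^4)$ quantified variables. Then I would assert, as the quantifier-free matrix of the PNF sentence, the conjunction of: (a) orthogonality $Q^T Q=I_n$ ($n^2$ polynomial equations of size $O(n)$); (b) the spectral identity $M_{ij}=\sum_{k} Q_{ik}D_{kk}Q_{jk}$ ($n^2$ equations of size $O(n)$); (c) the conditions for $M\in\mc{O}(G)$, namely $M_{uv}<0$ for $uv\in E$ and $M_{uv}=0$ for $uv\notin E$, $u\neq v$ ($O(n^2)$ constant-size atomic formulas); (d) exactly one negative eigenvalue, encoded by $D_{11}<0$ together with $D_{ii}\geq 0$ for $i\geq 2$; (e) corank at least $k$, encoded by $D_{n-k+1,n-k+1}=\cdots=D_{nn}=0$; and (f) SAH, encoded via Theorem~\ref{thm:SAH} as the matrix identity $N(M)\cdot B=I_p$, which states that $N(M)$ admits a right inverse and hence has full row rank $p$.

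The size is dominated by clause (f). One reads off from the definition of $N(M)$ that each entry of $N(M)$ is either $0$, a single entry of $M$, or a sum of two such entries. Hence each of the $p^2=O(n^4)$ scalar equations in $N(M)B=I_p$ is a degree-two polynomial in the variables with $O(n^2)$ monomials, for a total of $O(n^6)$ symbols. The other clauses contribute $O(n^3)$ or less, so the whole sentence has size $O(n^6)$ and is clearly computable from $(G,k)$ in polynomial time.

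Correctness is then a routine verification. If the sentence is satisfiable over $\R$, then (a)--(b) force $M$ to be symmetric with spectrum equal to the diagonal of $D$, (c) places $M$ in $\mc{O}(G)$, (d)--(e) enforce the prescribed signature and corank, and (f) together with Theorem~\ref{thm:SAH} yields SAH; hence $M$ witnesses $\mu(G)\geq k$. Conversely, any $M$ witnessing $\mu(G)\geq k$ admits $Q,D$ by the spectral theorem (after permuting columns of $Q$ so that the unique negative eigenvalue sits in position $1$ and $k$ zeros occupy the last positions), and $N(M)$ has a right inverse because it has full row rank by Theorem~\ref{thm:SAH}. The main obstacle is conceptual rather than combinatorial: it is the passage from the $\exists\forall$ formulation of SAH to a purely $\exists$ one, which is exactly what Theorem~\ref{thm:SAH} of \citet{SAH} provides; the accounting for the $O(n^4)$ variables and $O(n^6)$ size is then straightforward.
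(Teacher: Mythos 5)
Your proposal is correct and follows the paper's strategy in its two essential ideas: encode the sign/signature/corank constraints existentially via an eigendecomposition $M=QDQ^T$ with hard-coded zeros and sign conditions on the diagonal of $D$, and convert the universally quantified SAH into an existential rank condition on $N(M)$ via Theorem~\ref{thm:SAH}. The one place where you genuinely diverge is the encoding of ``$N(M)$ has full row rank $p$.'' The paper existentially guesses a singular value decomposition $N=ASB^T$ with orthogonal $A\in\R^{p\times p}$, $B\in\R^{n^2\times n^2}$ and strictly positive diagonal $S$, whereas you guess a right inverse $B\in\R^{n^2\times p}$ and assert $N(M)B=I_p$. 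Your device is more elementary: it needs no orthogonality constraints for $A$ and $B$ and no appeal to the relation between singular values and rank, and the verification that full row rank is equivalent to the existence of a right inverse is immediate in both directions (surjectivity of $x\mapsto N(M)x$ gives the right inverse; conversely $p=\rank(I_p)\le\rank(N(M))$). The paper's SVD encoding has the aesthetic advantage of treating $M$ and $N$ uniformly by decomposition, but both routes yield the same bounds: $O(n^4)$ quantified variables (dominated by the paper's $n^2\times n^2$ orthogonal factor, respectively by your $n^2\times p$ right inverse) and total size $O(n^6)$ (dominated in both cases by $O(n^4)$ scalar equations with $O(n^2)$ monomials each). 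Your extra step of introducing $M$ as $n^2$ fresh variables tied to $Q,D$ by the identity $M_{ij}=\sum_k Q_{ik}D_{kk}Q_{jk}$, rather than treating $M:=QDQ^T$ as an abbreviation, is harmless and does not affect the asymptotics.
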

\begin{proof}
Let $p:=\binom{n}{2}-\abs{E}$. The formula $\phi:=\phi_{G,k}$ will have a form equivalent to the following:
\[
(\exists L,D\in\R^{n\times n}, A\in\R^{p\times p}, B\in\R^{n^2\times n^2}, S\in\R^{p\times n^2})\psi(L,D,A,B,S),
\]
where $\psi$ is a quantifier-free formula formed as a conjunction of polynomial equalities and inequalities with variables corresponding to entries of $L,D,A,B$ and $S$.
Every element of $L, A$ and $B$ will be a real variable. On the other hand, since $D$ and $S$ will always represent diagonal matrices, only their diagonal entries will be real variables, their off-diagonal entries are always assumed to be zero.

For brevity, we write $M:=LDL^T$; this matrix plays the same role as in the discussions above. That is, $M$ certifies that $\mu(G)\geq k$. The matrices $L$ and $D$ represent the eigendecomposition of $M$---the matrix $D$ is a diagonal matrix with the spectrum of $M$ on its diagonal and $L$ is an orthogonal matrix representing the corresponding eigenbasis of $M$.

Similarly, we write $N:=ASB^T$. The matrix $N$ plays the role of $N(M)$ and $A,S,B$ represent the singular value decomposition of $N$. The singular values of $N$ are the diagonal entries of $S$ and $A,B$ are orthogonal matrices.
Since the rank of $N$ is equal to the rank of $N N^T$ and the singular values of $N$ are the square roots of the eigenvalues of $N N^T$, we see that $N$ has full rank if and only if all the singular values of $N$ are positive.

The formula $\psi(L,D,A,B,S)$ is a conjunction of the formulas expressing the following\footnote{For better readability, we do not write the formulas exactly in the language $\mathfrak{L}$, but it should be evident how to rephrase them in that language.}:
\begin{itemize}
	\item The formula saying that the diagonal of $D$ is 
	\[
	(\lambda_1, 0, \ldots, 0, \lambda_{k+2},\ldots, \lambda_n),
	\]
	where the number of hard-coded zero entries is $k$, together with specifying the requirements $\lambda_1<0$ and $\lambda_i\geq 0$ for $i\in\set{k+2, \ldots, n}$. This subformula has thus size only $O(n)$. Recall that $D$ is assumed to be a diagonal matrix, so its off-diagonal entries are also hard-coded to be zero (i.e., we do not need any formula to specify this).
	\item The formula $LL^T=I_n$. This can be written as a conjunction of $O(n^2)$ formulas of size $O(n)$.
	\item The formula expressing $M\in\mc{O}(G)$. Clearly, this can be written as a conjunction of $O(n^2)$ formulas, each of length $O(n)$.
	\item The formula saying that the diagonal of $S$ is strictly positive. This subformula has size $O(p)$, which is in $O(n^2)$. Recall that the matrix $S$ is also assumed to be diagonal, and thus, its off-diagonal entries are hard-coded to be zero. 
	\item The formula $A^{T}A=I_p$. This is a conjunction of $O(p^2)$ formulas of size $O(p)$. In total, this is an $O(n^6)$-long subformula.
	\item The formula $B^{T}B=I_{n^2}$. This is a subformula of size $O(n^6)$.
	\item The formula saying that $ASB^T=N(M)$. This is a conjunction of $O(n^4)$ formulas of length $O(n^2)$. In total, we again have a $O(n^6)$-long subformula.
\end{itemize}

Consequently, the size of $\phi$ is $O(n^6)$ and it contains only one (existential) quantifier over $O(n^4)$ variables. It is also clear that $\phi$ is constructible in time polynomial in $n$.
\end{proof}

The preceding discussion immediately implies the following corollary: 

\begin{cor}
There is an explicit algorithm that computes the value of $\mu(G)$ in space polynomial in $\abs{V}$ for any graph $G=(V,E)$.
\end{cor}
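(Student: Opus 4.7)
The plan is to combine the preceding proposition with the PSPACE decision procedure of \citet{PSPACE_existsR} for the existential theory of the reals. First, on input $G=(V,E)$ with $n=\abs{V}$, the algorithm will loop over all values $k\in\set{0,1,\ldots,n-1}$ and, for each $k$, construct the sentence $\phi_{G,k}$ as in the proposition. By the proposition this construction takes time polynomial in $n$ (so in particular space polynomial in $n$), and the resulting sentence has size $O(n^6)$ and only existential quantifiers.

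Next, the algorithm will feed each $\phi_{G,k}$ into Canny's algorithm, which decides truth of $\exists\R$-sentences in space polynomial in the size of the input sentence. Since each $\phi_{G,k}$ has size polynomial in $n$, each such call uses space polynomial in $n$. The overall procedure returns the largest $k$ for which $\phi_{G,k}$ is true (and returns $0$ if no such $k$ exists in the loop, noting the trivial lower bound $\mu(G)\ge 0$). By the proposition this value equals $\mu(G)$.

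For the PSPACE bound, the loop over $k$ only requires storing the current index $k$ (using $O(\log n)$ bits) and the current best answer; the workspace used to build each $\phi_{G,k}$ and to run Canny's procedure can be reused between iterations. Hence the total workspace is polynomial in $n$. There are no real technical obstacles here: the non-trivial content is already in the proposition (turning the Strong Arnold hypothesis into an existential statement via Theorem~\ref{thm:SAH} and encoding the spectral and rank constraints via eigen- and singular-value decompositions) and in the existence of a PSPACE decision procedure for $\exists\R$; the present corollary is just the straightforward composition of these two facts with a polynomially bounded outer search over $k$.
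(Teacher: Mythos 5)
Your proposal is correct and matches the paper's intended argument exactly: the paper dispatches this corollary with "the preceding discussion immediately implies," meaning precisely the composition you describe of the polynomial-time construction of $\phi_{G,k}$ with Canny's PSPACE decision procedure for $\exists\R$, iterated over $k$. The only detail worth keeping is the one you already noted, namely that the outer loop over $k$ reuses workspace, so the whole search stays in polynomial space.
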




\ifcomb
\else

\newcommand{\del}[1]{\widetilde{#1}}
\newcommand{\eq}{\mathtt{eq}}
\newcommand{\interior}{\mathrm{int} \, }
\newcommand{\ceq}{C_{4, \eq}}
\newcommand{\gteq}{\tilde g_{\#, \eq}}
\newcommand{\ckeq}[1]{C_{#1, \eq}}
\newcommand{\cteq}{C_{3, \eq}}
\newcommand{\zeq}{Z_{4, \eq}}
\newcommand{\zkeq}[1]{Z_{#1, \eq}}
\newcommand{\Ceq}{C^4_{\eq}}
\newcommand{\Ckeq}[1]{C^{#1}_{\eq}}
\newcommand{\obst}{\mathbf{o}}

\section{Recognition of graphs with \texorpdfstring{$\sigma(G) \leq 5$}{\unichar{"1D70E}(G)<=5}}
\label{a:sigma}

In this appendix, we show that $\sigma(G) > 5$ can be certified in
polynomial time by an explicit certificate (i.e., not via an unknown
forbidden minor). We recall from the introduction
that we provide here the full details in order to provide a rigorous support forthe motivation mentioned in the introduction. Otherwise, the contents of this appendix should
be regarded as a basis for a future work.

Throughout Appendix~\ref{a:sigma}, we change our previous convention and assume that all polyhedra (and their faces) are closed.

\subsection{Exponential time algorithm}

First we describe the exponential time algorithm mentioned in the
introduction.

\paragraph{Polytopal and polygonal complexes.} By a \emph{polytopal complex} we mean a polyhedral complex where each polyhedron is bounded
(i.e., a polytope). A \emph{polygonal complex} is a polytopal comlex of
dimension at most $2$.

\paragraph{$2$-closure.} A polygonal complex central to the contents of this
section will be so called $2$-closure of a graph. Let $G = (V,E)$ be a graph.
Let $\C_2(G)$ be the $2$-dimensional CW-complex obtained from $G$ by
attaching a polygonal disk $D_s$ to every cycle $s$ in $G$. 
\Citet{sigma} define a \emph{closure} of $G$ as a CW-complex $\C$ such that (i) $\C^{(1)}$ equals to $G$ and (ii) for each $i \geq 0$ and each $U \subseteq V$ that induces a connected subgraph of $G$, the higher homotopy group $\pi_i(\C^{(i+1)}[U])$ is trivial, where $\C[U]$ denotes the subcomplex of $\C$ induced by $U$. The complex $\C_2(G)$ satisfies the condition (i) and it also satisfies the condition (ii) for $i \leq 1$. From the proof of Theorem~19 in~\citep{sigma}, it follows that $\C_2(G)$ can be extended to a closure of $G$, thus it is appropriate to call $\C_2(G)$ a \emph{$2$-closure} of $G$. 

It follows from~\citep{sigma} that $\sigma(G) \leq 5$ if and only if $\C_2(G)$
admits an even map into $\R^4$; see Proposition~\ref{p:Iz} below for precise
statement convenient for our setting. As mentioned in the introduction,
determining whether a $2$-complex admits an even map into $\R^4$ is known to be
easy via equivariant obstruction theory (it is equivalent to vanishing of the
$\Z_2$-reduction of so called van Kampen obstruction). Usually, this is set in
the language of simplicial complexes but the extension to polygonal complexes
is straightforward. Below we provide the details needed for explanation of our
algorithm (and proof of its correctness).

\paragraph{Deleted product.} Given a polygonal complex $\P$, by
$\del{\P}$ we denote the \emph{deleted product} of $\P$. This is the
polytopal complex with faces of the form $\eta \times \tau$ where
$\eta$ and $\tau$ are disjoint faces of $\P$. (Because of the convention
for this section $\eta$ and $\tau$ are closed. Therefore their disjointness
means that they do not share a vertex.) Note that $\del{\P}$ is a
$4$-dimensional complex as soon as $\P$ contains a pair of disjoint $2$-faces.
There is a natural $\Z_2$ action on $\del\P$ swapping $\eta \times \tau$ and
$\tau \times \eta$.

\paragraph{Chains and symmetric chains.}
Given a polytopal complex $\P$ by $C_k(\P)$ we denote the space of
\emph{$k$-chains}
of $\P$ (over $\Z_2$; all considerations in this section will be over $\Z_2$).
This means that the elements of $C_k(\P)$ are formal linear combinations
\[
\sum \alpha_\eta \eta
\]
where $\alpha_\eta \in \Z_2$ and the sum is over all $k$-faces $\eta$ of $\P$.
The boundary operator $\partial\colon C_k(\P) \to C_{k-1}(\P)$ is defined so that a
$k$-face $\eta$ is mapped to the sum of all $(k-1)$-faces of $\eta$ and then it
is extended linearly to $C_k(\P)$. An element $z \in C_k(\P)$ is a $k$-cycle if
$\partial z = 0$. The space of $k$-cycles is denoted $Z_k(\P)$. 
Note that we carefully distinguish graph-theoretic cycles in graph $G$
(connected subgraphs where every vertex has degree $2$) and $k$-cycles. For
comparison, subgraphs of $G$ such that every vertex has even degree would be $1$-cycles in $Z_1(G)$, but we
will never need them.


%

In even more special case when $\P = \C_2(G)$, we simplify the notation for
symmetric chains in $\ckeq k (\C_2(G))$ so that we write them in a form

\begin{equation}
  \label{e:ceq}
  \sum\limits \alpha_{r \times s} \cdot D_r \times D_s.
\end{equation}
That is we simplify  $\alpha_{D_r \times D_s}$ to $\alpha_{r \times s}$
where $r$ and $s$ are disjoint cycles of $G$. If we further set
$\alpha_{\{r,s\}} := \alpha_{r \times s} = \alpha_{s \times r}$,
then~\eqref{e:ceq} can be rewritten as
\begin{equation}
  \label{e:ceq_unordered}
  \sum\limits \alpha_{\{r,s\}} \cdot (D_r \times D_s + D_s \times D_r)
\end{equation}
where the sum is over all unordered pairs $\{r,s\}$ of disjoint cycles.

\paragraph{Symmetric cochains.}
Given a polygonal complex $\P$, by $\Ckeq k(\del{\P})$ we denote the space of
corresponding symmetric cochains, that is, linear maps $m \colon
\ckeq k (\del{\P}) \to \Z_2$ satisfying $m(\eta \times \tau)
= m(\tau \times \eta)$ for any $k$-face $\eta \times \tau$ of $\del \P$.

\paragraph{General position and almost general position.}
Let $\P$ be a polygonal complex. We say that a PL (piecewise linear) map
$f\colon |\P| \to \R^4$ is in \emph{general position} if the following two conditions
are satisfied.

\begin{enumerate}[(i)]
 \item Whenever $\eta$ is an edge of $\P$, $x \in \eta$, $\tau$ is a $2$-face
   of $\P$, $y \in \tau$, then $f(x) = f(y)$ implies $x = y$.
 \item Whenever $\eta$ and $\tau$ are distinct $2$-faces of $\P$, then $f(\interior
   \sigma)$ and $f(\interior \tau)$ meet in a finite number of points and each
    such point is a transversal crossing. (The symbol $\interior$ denotes the
    interior.)
\end{enumerate}

We say that $f$ is in \emph{almost general position} if it satisfies (i) and it
satisfies (ii) for every pair $\eta$ and $\tau$ of disjoint (instead of
distinct) $2$-faces of $\P$.


\paragraph{Crossing cocycle.} Given a PL
map $f\colon |\P| \to \R^4$ in almost general position, we
define the \emph{crossing cocycle} $\obst_f \in \Ceq(\del{\P})$ by setting
$\obst_f(\eta \times \tau + \tau \times \eta)$ to be the number of crossings of
$f(\eta)$ and $f(\tau)$ if $\eta$ and $\tau$ are disjoint $2$-faces of $\P$. 
Then we extend $\obst_f$ linearly to $\ceq(\del{\P})$.\footnote{The reader
familiar with the van Kampen obstruction may observe that $\obst_f$ is a
representative of the cohomology class of the van Kampen obstruction (modulo
2).} According to the definition
of even map in~\citep{sigma}, the map $f$ is \emph{even} if and only if $\obst_f = 0$. Given
$z \in \zeq(\del{\P})$, $\obst_f(z)$ coincides with $I(z,f)$ defined
in~\citep[Sec. 4]{sigma} in our special case when $f$ is an almost general position PL
map. As~\citet{sigma} argue $I(z,f)$ is independent of the choice of $f$. Then
it is possible to define $I(z) = I(z,f)$ where $f$ is an arbitrary general
position PL map. Note that $I$ is a linear map from $\zeq(\del{\C_2(G)})$ to $\Z_2$.


The following proposition is not explicitly mentioned in~\citep{sigma}.
However, it immediately follows from Theorem~30 in~\citep{sigma}
(used with $n=4$) and the equivalent definition of $\sigma$ via $I(z)$
in~\citep[Sec.~6]{sigma}. 

\begin{prop}[\citep{sigma}]
  \label{p:Iz}
  We get $\sigma(G) \leq 5$ if and only if $I(z) = 0$ for every $z \in
  \zeq(\del{\C_2(G)})$.
\end{prop}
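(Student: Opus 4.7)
The plan is to combine two ingredients from \citep{sigma}. First, by the very definition of $\sigma$ we have $\sigma(G)\leq 5$ iff every closure $\C$ of $G$ admits an even map into $\R^5$. Theorem~30 of \citep{sigma} (with $n=4$) rephrases this: it shows that for any closure $\C$ of $G$, admitting an even map into $\R^{5}$ is equivalent to the $2$-skeleton $\C^{(2)}$ admitting an even map into $\R^{4}$. The $2$-closure $\C_2(G)$ was precisely constructed so that it can be chosen as $\C^{(2)}$ for some closure $\C$ of $G$ (this is the content of the paragraph preceding Proposition~\ref{p:Iz}). Hence $\sigma(G)\leq 5$ iff there is an even PL map $f\colon |\C_2(G)|\to\R^{4}$.

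Second, I would invoke the reformulation of evenness via the invariant $I$ given in \citep[Sec.~6]{sigma}. The authors of the present paper have already noted in the paragraph above Proposition~\ref{p:Iz} that for any almost-general-position PL map $f\colon |\C_2(G)|\to\R^{4}$, the crossing cocycle $\obst_f\in\Ceq(\del{\C_2(G)})$ evaluates on a symmetric $4$-cycle $z\in\zeq(\del{\C_2(G)})$ to exactly $I(z,f)$, and that $I(z):=I(z,f)$ is independent of the general-position choice of $f$. Standard equivariant obstruction theory (which is exactly what \citep[Sec.~4--6]{sigma} establishes in this setting) then says: some PL map $f\colon|\C_2(G)|\to\R^{4}$ has $\obst_f=0$, i.e.\ is even, if and only if the linear functional $I\colon\zeq(\del{\C_2(G)})\to\Z_2$ vanishes identically.

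Chaining the two equivalences yields the proposition: $\sigma(G)\leq 5$ iff $\C_2(G)$ admits an even map into $\R^{4}$ iff $I(z)=0$ for every $z\in\zeq(\del{\C_2(G)})$.

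The main obstacle is bookkeeping rather than novelty. Two points need care. First, one must verify that $\C_2(G)$ genuinely qualifies as the $2$-skeleton of some closure of $G$ in the precise sense required by Theorem~30 of \citep{sigma}---this is why the appendix explicitly points to the proof of Theorem~19 in \citep{sigma}, where the extension of $\C_2(G)$ to a full closure is exhibited. Second, one must track through \citep[Sec.~4--6]{sigma} that the invariant $I$ built from a general-position PL representative is indeed a complete obstruction to evenness on the $2$-skeleton (not merely a necessary condition); this is the direction where the obstruction-theoretic argument is nontrivial, but it is already carried out in \citep{sigma} and does not need to be redone here.
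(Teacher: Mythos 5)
Your proposal is correct and follows essentially the same route as the paper, which likewise derives the statement by combining Theorem~30 of \citep{sigma} (with $n=4$) with the equivalent characterization of $\sigma$ via $I(z)$ from \citep[Sec.~6]{sigma}; your write-up merely spells out the two citations in more detail. No further comparison is needed.
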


%
%
  

%

\paragraph{Testing $\sigma(G) \leq 5$ in exponential time.}
Now we explain a simple algorithm for testing whether $\sigma(G) \leq 5$ in
exponential time via Proposition~\ref{p:Iz}.

Let $z^1, \dots, z^t$ be a basis
of $\zeq(\del{\C(G)})$. The value $t$ as well as size of each $z^i$ is
polynomial in the size of $\del{\C(G)}$; however, the size of $\del{\C(G)}$
might be (at most) exponential in size of $G$. 

Because of linearity of $I$, it is sufficient to test whether $I(z^i) = 0$ for
every $i \in [t]$ due to Proposition~\ref{p:Iz}. Each such test can be performed in time polynomial
in size of $z^i$. Indeed, it is sufficient to consider arbitrary general
position PL map $f\colon |\C_2(G)| \to \R^4$. Then we evaluate $\obst_f(z^1),
\dots, \obst_f(z^t)$. A good particular choice when it is easy to evaluate
$\obst_f(z^i)$ is to map the vertices of $G$ to the moment curve (as
in~\citep{matousek_tancer_wagner11}) pick a fixed triangulation of every disk
$D_r$ and extend the map linearly.\footnote{
This map is only in weakly general position which is of course sufficient for
evaluating $\obst_f(z^i)$.  
(Alternatively, it would be possible to triangulate each disk $D_r$ so
that we introduce one new vertex in the barycentre obtaining a truly general
position map.)}


%

\subsection{Speed-up}

Let $n$ be the number of vertices of $G = (V,E)$, where $V = [n]$.
Let $\Delta_{n-1}$ be the
$n$-simplex with vertex set $V$. Note that $G$ is a subgraph of the
$1$-skeleton $\Delta_{n-1}^{(1)}$. We will first define a suitable map $g
\colon |\C_2(G)| \to |\Delta_{n-1}^{(2)}|$. We set $g$ as identity on $G =
\C^{(1)}_2(G)$. For every cycle $r$ in $G$ we triangulate $D_r$ so that every
triangle in the triangulation contains the minimal vertex of $r$, and we
correspondingly map $D_r$ to $|\Delta_{n-1}^{(2)}|$, that is, a triangle in
$D_r$ with vertices $i$, $j$, $k$ is mapped to the triangle with vertices $i$,
$j$, $k$ of $\Delta_{n-1}^{(2)}$. (Note that if $r$ and $s$ are two distinct
cycles of $G$, then $g(D_r)$ and $g(D_s)$ may easily overlap in some triangle
although the disks $D_r$ and $D_s$ may overlap only on the boundary.) Note also
that $g(D_r)$ is always a disk. 


Given a cycle $r$ of $G$, let $g_{\#}(D_r) \in C_2(\Delta_{n-1}^{(2)})$ be the
chain induced by $g$ (that is, the sum of the triangles triangulating
$g(D_r)$). Also the map $g$ induces a $\Z_2$-equivariant map $\tilde g \colon
|\del{\C_2(G)}| \to |\del{\Delta_{n-1}^{(2)}}|$ given by $\tilde g (x,y) =
(g(x), g(y))$. This map further induces an equivariant chain homomorphism
$\gteq \colon \ceq(\del{\C_2(G)}) \to
\ceq(\del{\Delta_{n-1}^{(2)}})$ which we explicitly describe below.

First let us assume that $c = \tau_1 + \dots + \tau_k$ and $c' = \tau'_1 +
\dots + \tau'_\ell$ are two chains in $C_2(\Delta_{n-1}^{(2)})$ such that for
every $i \in [k]$ and $j \in [\ell]$, $\tau_i$ and $\tau_j$ are triangles which
are disjoint. Then we set 
$c \times c' := \sum_{i,j=1,1}^{k,\ell} \tau_i \times \tau'_j$. We remark that
$c \times c' + c' \times c$ belongs to $\ceq(\del{\Delta_{n-1}^{(2)}})$.

Now, given two disjoint cycles $r$ and $s$ of $G$ 
we set
\begin{equation}
  \label{e:gteq}
  \gteq (D_r \times D_s + D_s \times D_r) := g_{\#}(D_r) \times g_{\#}(D_s) + g_{\#}(D_r) \times
g_{\#}(D_s)
\end{equation}
(adapting the convention from the previous paragraph). Then we
extend $\gteq$ linearly to $\ceq(\del{\C_2(G)})$. Note that the cycles  $D_r
\times D_s + D_s \times D_r$ generate $\ceq(\del{\C_2(G)})$ via~\eqref{e:ceq_unordered}.

\begin{prop}
\label{p:cycle_to_cycle}
Let $z$
be a symmetric $4$-cycle from $\zeq(\del{\C_2(G)})$. Then $z' = \gteq (z)$ is a
symmetric $4$-cycle from $\zeq(\del{\Delta^{(2)}_{n-1}})$. In addition, $I(z')
= I(z)$. 
\end{prop}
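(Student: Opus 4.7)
The plan is to separate the proposition into two claims: (a) $z' = \gteq(z)$ is a symmetric $4$-cycle, and (b) $I(z') = I(z)$. Both rest on the fact that $g$ respects the cellular structure and on the explicit fan triangulation used to define $g$.

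For (a), the symmetry of $z'$ is immediate from the defining formula~\eqref{e:gteq}, which is manifestly invariant under swapping the two factors. For the cycle property, I would verify that $g_{\#}$ commutes with the boundary operator on the $2$-chains we care about. The fan triangulation of $D_r$ from the minimum vertex of the cycle $r$ has the property that every edge interior to the disk is shared by exactly two triangles, so these edges cancel over $\Z_2$ and $\partial g_{\#}(D_r)$ equals the $1$-chain of edges of $r$, which is exactly $g_{\#}(\partial D_r)$. Combined with the Leibniz rule $\partial(c \times c') = \partial c \times c' + c \times \partial c'$ for cross products of chains (valid when all pairs of triangles appearing are vertex-disjoint so that the products live in the deleted product), this yields $\partial \circ \gteq = \gteq \circ \partial$ on the generators $D_r \times D_s + D_s \times D_r$, and hence on all of $\ceq(\del{\C_2(G)})$ by linearity. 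So $\partial z = 0$ forces $\partial z' = 0$.

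For (b), I would fix a PL map $f' \colon |\Delta_{n-1}^{(2)}| \to \R^4$ in general position (for example by placing vertices on the moment curve and extending linearly) and set $f := f' \circ g$. The crucial observation is that when $r$ and $s$ are disjoint cycles in $G$, every triangle in $g_{\#}(D_r)$ has all three vertices in $V(r)$ while every triangle in $g_{\#}(D_s)$ has all three vertices in $V(s)$; since these vertex sets are disjoint, every pair $(\tau,\tau')$ of such triangles is vertex-disjoint in $\Delta_{n-1}^{(2)}$, and general position of $f'$ makes $f'(\tau)$ and $f'(\tau')$ cross transversally in finitely many points. This is enough to make $f$ almost general position on all pairs of disjoint $2$-faces of $\C_2(G)$ contributing to $\obst_f(z)$. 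The crossing count on a generator then decomposes as
\[
\obst_f(D_r \times D_s + D_s \times D_r) = \sum_{\tau, \tau'} \#\bigl(f'(\tau) \cap f'(\tau')\bigr),
\]
where the sum is over ordered pairs of triangles $\tau$ in $g_{\#}(D_r)$ and $\tau'$ in $g_{\#}(D_s)$; by~\eqref{e:gteq} this equals $\obst_{f'}\bigl(\gteq(D_r \times D_s + D_s \times D_r)\bigr)$. Linearity then gives $\obst_f(z) = \obst_{f'}(z')$, and the independence of $I(\cdot,f)$ from the chosen almost general position map (recalled from~\citep{sigma}) yields $I(z) = I(z')$.

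The main obstacle I expect is arranging that $f = f' \circ g$ satisfies the full definition of almost general position on $\C_2(G)$ --- in particular condition (i) across edge/$2$-face pairs where the underlying cycles share edges, so that $g$ identifies parts of distinct disks with a common triangle of $\Delta_{n-1}^{(2)}$. A clean way around this is either to apply a small equivariant perturbation of $f$ away from the offending low-dimensional loci (which does not alter $\obst_f(z)$), or to observe directly that $\obst_f(z)$ depends only on crossings of images of vertex-disjoint $2$-faces, for which the analysis above suffices.
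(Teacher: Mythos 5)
Your proposal is correct and follows essentially the same route as the paper: both parts rest on $\partial g_{\#}(D_r)=r$ (so that $\gteq$ behaves as a chain map, which the paper verifies by the explicit double computation of $\partial z$ and $\partial z'$) and on counting crossings of vertex-disjoint triangle pairs under a general position map $f'$ on $|\Delta_{n-1}^{(2)}|$ with $f=f'\circ g$. Your extra care about $f'\circ g$ only being in \emph{almost} general position is a legitimate point that the paper passes over quickly, and your resolution (only disjoint $2$-face pairs matter for $\obst_f(z)$) matches the paper's intent.
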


\begin{proof}
First we verify that $z' \in \zeq(\del{\Delta^{(2)}_{n-1}})$.
  From the definition of $\gteq$, we get that $z'$ belongs to
$\ceq(\del{\Delta^{(2)}_{n-1}})$, thus we only need to verify that $z'$ is a
  $4$-cycle.

Assume that 
\[
z = \sum\limits \alpha_{\{r,s\}} \cdot (D_r \times D_s + D_s \times D_r).
\]

Then,
\begin{linenomath}
\begin{align*}
  \partial z &= \sum\limits \alpha_{\{r,s\}} \cdot (D_r \times \partial D_s +
  \partial D_r \times D_s  + D_s \times  \partial D_r + \partial D_s \times D_r)\\ 
  &= \sum\limits \alpha_{\{r,s\}} \cdot (D_r \times s +
  r \times D_s  + D_s \times  r + s \times D_r)  \\
  &= \sum_r \left( D_r \times \left(\sum_s \alpha_{\{r,s\}} s \right) + \left(\sum_s
\alpha_{\{r,s\}} s \right) \times D_r \right) 
\end{align*}
\end{linenomath}
where the outer sum is over all cycles $r$ of $G$ and the inner sums are over
all cycles $s$ of $G$ disjoint from $r$. Because $\partial z = 0$, we get that
$\sum_s \alpha_{\{r,s\}} s = 0$ for each of the inner sums.

By analogous computation using $\partial g_{\#}(D_r) = r$ we get
\begin{linenomath}
\begin{align*}
  \partial z'  
  &= \sum_r \left( g_{\#}(D_r) \times \left(\sum_s \alpha_{\{r,s\}} s \right) + \left(\sum_s
\alpha_{\{r,s\}} s \right) \times g_{\#}(D_r) \right)  \\
  &= 0.
\end{align*}
\end{linenomath}

It remains to show $I(z) = I(z')$. Let $f \colon |\Delta_{n-1}^{(2)}| \to \R^4$
be a general position map. Note that $f \circ g \colon |C_2(G)| \to \R^4$ is in
almost general position. Thus, according to the definition of $I(z)$, we need
to show $\obst_f(z') = \obst_{f \circ g} (z)$. 

Let $r$ and $s$ be disjoint cycles of $G$ such that $f \circ g(D_r)$ and $f
\circ g(D_s)$ intersect in $k_{\{r,s\}}$
crossings. Then those two cycles contribute exactly by
$\alpha_{\{r,s\}} k_{\{r,s\}}$ to $\obst_{f \circ g} (z)$ (according to its
definition). However, the crossings between $f \circ g(D_r)$ and $f
\circ g(D_s)$ are also crossings of triangles in $g_{\#}(D_r)$ and
$g_{\#}(D_s)$ when mapped under $f$. Thus they contribute by the same amount to
$\obst_f(z')$ using that $z' = \gteq(z)$ and formula~\eqref{e:gteq}.
\end{proof}


Now let $Z' := \gteq(\zeq(\del{\C_2(G)}))$. According to
Proposition~\ref{p:cycle_to_cycle}, $Z'$ is a subspace of
$\zeq(\del{\Delta_{n-1}^{(2)}})$. 


\begin{cor}
\label{c:z'}
  There is $z' \in Z'$ with $I(z') = 1$ if and only if $\sigma(G) > 5$.
\end{cor}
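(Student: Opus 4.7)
The plan is to combine Proposition~\ref{p:Iz} with Proposition~\ref{p:cycle_to_cycle} and conclude via a direct two-way implication, since essentially all the substantive content has already been established. Concretely, $I$ takes values in $\Z_2$, so Proposition~\ref{p:Iz} is equivalent to the statement that $\sigma(G) > 5$ if and only if there exists $z \in \zeq(\del{\C_2(G)})$ with $I(z) = 1$. I will use this reformulation throughout.

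For the forward direction (from $\sigma(G) > 5$ to existence of a suitable $z'$), I would take a witness $z \in \zeq(\del{\C_2(G)})$ with $I(z) = 1$ guaranteed by Proposition~\ref{p:Iz}, and set $z' := \gteq(z)$. Proposition~\ref{p:cycle_to_cycle} guarantees simultaneously that $z' \in \zeq(\del{\Delta_{n-1}^{(2)}})$ and that $I(z') = I(z) = 1$. By the very definition of $Z' = \gteq(\zeq(\del{\C_2(G)}))$, we have $z' \in Z'$, which is exactly what we need.

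For the reverse direction, I would take any $z' \in Z'$ with $I(z') = 1$. Since $z' \in Z' = \gteq(\zeq(\del{\C_2(G)}))$, there exists by definition some preimage $z \in \zeq(\del{\C_2(G)})$ with $\gteq(z) = z'$. Applying Proposition~\ref{p:cycle_to_cycle} again, $I(z) = I(z') = 1$, and so by Proposition~\ref{p:Iz} we conclude $\sigma(G) > 5$.

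There is no real obstacle here: the corollary is essentially a bookkeeping statement saying that, when testing the van Kampen-type obstruction, we may replace the potentially exponential-size complex $\del{\C_2(G)}$ with the polynomial-size complex $\del{\Delta_{n-1}^{(2)}}$ and restrict attention to the image subspace $Z'$. The only two small points to keep in mind while writing are (a) to observe explicitly that $I$ takes values in $\Z_2$, so "nonzero" equals "equal to $1$", and (b) to cite the two propositions at the right moments so that both the well-definedness of $z'$ as a symmetric $4$-cycle and the equality $I(z') = I(z)$ are properly justified by Proposition~\ref{p:cycle_to_cycle}.
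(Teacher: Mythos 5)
Your proposal is correct and follows exactly the paper's own argument: both directions are obtained by passing between $z$ and $z' = \gteq(z)$ via the definition of $Z'$, invoking Proposition~\ref{p:cycle_to_cycle} for $I(z') = I(z)$ and Proposition~\ref{p:Iz} for the equivalence with $\sigma(G) > 5$. Nothing is missing.
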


\begin{proof}
First assume that there is $z' \in Z'$ with $I(z') = 1$. Then there is also $z
\in \zeq(\del{\C_2(G)})$ such that $z' = \gteq(z)$. According to
Proposition~\ref{p:cycle_to_cycle}, $I(z) = 1$. Therefore $\sigma(G) > 5$ by
Proposition~\ref{p:Iz}.

On the other hand, let us assume that $\sigma(G) > 5$. Then there is $z
\in \zeq(\del{\C_2(G)})$ with $I(z) = 1$ by Proposition~\ref{p:Iz}. Then
$\gteq(z)$ is the required $z'$ by Proposition~\ref{p:cycle_to_cycle}.
\end{proof}

\begin{thm} 
  \label{t:coNP}
  For any graph $G$, there is an explicit\footnote{One may
  observe that a forbidden minor is a polynomial size certificate showing
$\sigma(G) > 5$. However, we do not regard such a certificate explicit as we do
not know the list of forbidden minors.} polynomial size certificate showing
  $\sigma(G) > 5$. 
\end{thm}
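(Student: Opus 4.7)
The plan is to invoke Corollary~\ref{c:z'} and take as certificate an element $z' \in Z'$ with $I(z') = 1$, encoded by its $\Z_2$-coefficients indexed by the unordered pairs of disjoint triangles in $\Delta_{n-1}^{(2)}$. Since $\Delta_{n-1}^{(2)}$ has only $\binom{n}{3}$ triangles, the ambient symmetric $4$-chain space $\Ceq(\del{\Delta_{n-1}^{(2)}})$ has dimension $O(n^6)$, so the encoding length is polynomial in $n$. To verify the certificate, the algorithm must check four conditions on $z'$ in polynomial time: that $z'$ is (i) $\Z_2$-symmetric, (ii) a $4$-cycle, (iii) satisfies $I(z')=1$, and (iv) lies in $Z'$.

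Items (i) and (ii) are routine linear-algebraic tests over $\Z_2$. For (iii), I would fix an explicit general-position PL map $f \colon |\Delta_{n-1}^{(2)}| \to \R^4$ by sending vertex $i$ to the moment-curve point $(t_i, t_i^2, t_i^3, t_i^4)$ with distinct rationals $t_i$ and extending linearly (using a fixed triangulation of each $2$-face if strict general position is required, exactly as in the exponential-time algorithm above). For any two disjoint triangles $\tau, \tau'$, the parity of $|f(\tau) \cap f(\tau')|$ is a classical $O(1)$ combinatorial function of the relative order of their six vertex indices on the moment curve; summing these contributions over the support of $z'$ evaluates $\obst_f(z') = I(z')$ in polynomial time.

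The main obstacle is item (iv). The natural spanning set of $Z'$ is indexed by disjoint pairs of graph-theoretic cycles of $G$, which can be exponentially many, so enumeration is infeasible. The plan is to cut out $Z'$ instead as the solution space of a polynomial-size linear system over $\Z_2$ by exploiting the cycle space $Z_1(G)$, whose dimension $m \leq |E|$ is polynomial. After fixing a cycle basis $b_1,\dots,b_m$ of $G$ and a filling map $\Phi \colon Z_1(G) \to C_2(\Delta_{n-1}^{(2)})$ with $\partial \Phi(\gamma) = \gamma$, one would prove that $Z'$ is spanned by a polynomial number of atomic symmetric $4$-cycles built from the products $\Phi(b_i) \times \Phi(b_j) + \Phi(b_j) \times \Phi(b_i)$ together with correction terms drawn from the $\binom{n}{4}$ tetrahedron boundaries (which generate $Z_2(\Delta_{n-1}^{(2)})$). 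Once $Z'$ is exhibited as an explicitly computable subspace cut out by polynomial-many linear equations, checking $z' \in Z'$ reduces to Gaussian elimination. The delicate technical point will be translating between graph-theoretic cycles and arbitrary $Z_1(G)$-elements: the original definition of $Z'$ involves only disjoint graph-theoretic cycles, and one must verify that passing to the linear-algebraic parametrization by $Z_1(G)$ gives exactly the same span, neither larger nor smaller.
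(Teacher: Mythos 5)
Your overall framing (the certificate is a symmetric $4$-cycle $z'$ with $I(z')=1$, of polynomial encoding length because $\ceq(\del{\Delta_{n-1}^{(2)}})$ is generated by at most $\binom{n}{3}^2$ pairs of triangles, with $I(z')$ evaluated via a moment-curve map) matches the paper, and your steps (i)--(iii) are sound. The genuine gap is step (iv), and it is not a deferrable ``delicate technical point''---it is the entire content of the theorem. You ask the \emph{verifier} to decide $z'\in Z'$ by first computing an explicit polynomial-size description of $Z'$ (a spanning set of ``atomic'' symmetric $4$-cycles built from a cycle basis of $G$ plus tetrahedron-boundary corrections). No such description is known, and your candidate generators are problematic: $\Phi(b_i)\times\Phi(b_j)+\Phi(b_j)\times\Phi(b_i)$ need not even lie in $\ceq(\del{\Delta_{n-1}^{(2)}})$, since the deleted product contains only products of \emph{disjoint} faces and the fillings of two basis cycles of $G$ will in general share triangles or vertices. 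Moreover, $Z'$ is by definition $\gteq(\zeq(\del{\C_2(G)}))$, the image of the kernel of a boundary map on a space of exponential dimension indexed by pairs of \emph{vertex-disjoint} graph-theoretic cycles; the disjointness constraint is combinatorial and there is no reason the span is captured by a linear parametrization through $Z_1(G)$---exactly the point you flag but do not resolve. Most tellingly, if your plan for (iv) worked, combined with (i)--(iii) it would yield a deterministic polynomial-time algorithm for deciding $\sigma(G)\leq 5$, whereas the Remark immediately following this theorem records that even finding a basis of $Z'$ in polynomial time is open and would already give such an algorithm. You would thus be proving something strictly stronger than the statement, with the key step unsubstantiated.

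The fix is to put the membership witness into the certificate instead of asking the verifier to compute $Z'$. The paper's certificate is a sparse preimage: a symmetric chain $z\in\zeq(\del{\C_2(G)})$ with polynomially many nonzero coordinates $\alpha_{\{r,s\}}$, each indexed by an explicitly listed pair of disjoint cycles $r,s$ of $G$, such that $\gteq(z)=z'$. The verifier checks $\partial z=0$ in polynomial time (by the computation in Proposition~\ref{p:cycle_to_cycle} this reduces to $\sum_s\alpha_{\{r,s\}}\,s=0$ for each $r$ in the support), computes $z'=\gteq(z)$ explicitly via formula~\eqref{e:gteq}, and evaluates $I(z')$ exactly as you describe; Proposition~\ref{p:cycle_to_cycle} then guarantees $z'\in Z'$ and $I(z')=I(z)$, and Corollary~\ref{c:z'} concludes. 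The polynomial bound on $\dim Z'$ is used only to argue that some sparse $z$ of this form exists, not to compute $Z'$.
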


\begin{proof}
  By Corollary~\ref{c:z'}, it is sufficient to certificate an existence of $z'
  \in Z'$ with $I(z') = 1$. We can easily observe that the dimension of $Z'$ is
  polynomially bounded by the size of $G$ because $Z'$ is a subspace of
$\ceq(\del{\Delta^{(2)}_{n-1}})$. A safe bound is that
$\ceq(\del{\Delta^{(2)}_{n-1}})$ is generated by at most $\binom {n}{3}^2$ pairs
of triangles where $n$ is the number of vertices of $G$. Therefore, there is a
chain $z \in \zeq(\del{\C_2(G)})$ with polynomially many nonzero coordinates
satisfying $z' = \gteq(z)$ which thereby certifies that $z' \in Z$. Certifying
$I(z') = 1$ is easy via a suitable general position map (as described for the
exponential time algorithm).
\end{proof}


\begin{remark}
 If we knew how to find a basis of $Z'$ in polynomial time, then we would
 immediately get a polynomial time algorithm by evaluating $I(z')$ for all
 basis cycles $z'$.
\end{remark}

\fi

\end{document}

